\newcommand{\dom}{\mathrm{dom}}
\newcommand{\thzfc}{\mathrm{ZFC}}
\newcommand{\Con}{\mathrm{Con}}
 \newcommand{\Ed}{\mathbf{Ed}}
\newcommand{\Mbf}{\mathbf{M}}
\newcommand{\Bwf}{\mathcal{B}}
\newcommand{\Ewf}{\mathcal{E}}
\newcommand{\Iwf}{\mathcal{I}}
\newcommand{\Jwf}{\mathcal{J}}
\newcommand{\Mwf}{\mathcal{M}}
\newcommand{\Nwf}{\mathcal{N}}
\newcommand{\bfrak}{\mathfrak{b}}
\newcommand{\cfrak}{\mathfrak{c}}
\newcommand{\dfrak}{\mathfrak{d}}
\newcommand{\efrak}{\mathfrak{e}}
\newcommand{\menos}{\smallsetminus}
\newcommand{\frestr}{{\restriction}}
\newcommand{\add}{\mbox{\rm add}}
\newcommand{\cov}{\mbox{\rm cov}}
\newcommand{\non}{\mbox{\rm non}}
\newcommand{\cof}{\mbox{\rm cof}}
\newcommand{\limdir}{\mbox{\rm limdir}}
\newcommand{\Bor}{\mathds{B}}
\newcommand{\Cor}{\mathds{C}}
\newcommand{\Por}{\mathds{P}}
\newcommand{\Qor}{\mathds{Q}}
\newcommand{\Qnm}{\dot{\Qor}}
\newcommand{\R}{\mathbb{R}}
\newcommand{\cf}{\mbox{\rm cf}}
\newcommand{\la}{\langle}
\newcommand{\ra}{\rangle}
\newcommand{\Seq}{\mathrm{seq}}
\newcommand{\Fr}{\mathrm{Fr}}
\newcommand{\Rbf}{\mathbf{R}}
\newcommand{\Ebf}{\mathbf{E}}
\newcommand{\Cbf}{\mathbf{C}}
\newcommand{\Cv}{\mathrm{Cv}}
\newcommand{\aLc}{\mathbf{aLc}}
\newcommand{\Lb}{\mathbf{Lb}}
\newcommand{\id}{\mathrm{id}}
\newcommand{\balc}{\mathfrak{b}^{\mathrm{aLc}}}
\newcommand{\dalc}{\mathfrak{d}^{\mathrm{aLc}}}
\newcommand{\vfa}{\mathfrak{v}}
\newcommand{\leqT}{\preceq_{\mathrm{T}}}
\newcommand{\eqT}{\cong_{\mathrm{T}}}
\newcommand{\ubd}{\mathbf{ubd}}
\newcommand{\hyp}{\mathbf{hyp}}
\newcommand{\gen}{\mathrm{gen}}
\newcommand{\const}{\mathsf{const}}
\newcommand{\spr}{\mathrm{pr}}
\newcommand{\cpr}{\mathrm{cp}}
\newcommand{\compact}{\mathrm{K}}
\newcommand{\setsep}{:\,}
\newcommand{\set}[2]{\{#1\setsep#2\}}
\newcommand{\largeset}[2]{\left\{#1\setsep#2\right\}}
\newcommand{\seq}[2]{\la#1\setsep#2\ra}
\newcommand{\bsp}{\allowbreak\ }
\newcommand{\comma}{\mathord,\bsp}
\newcommand{\setand}{\bsp\text{and}\bsp}
\newcommand{\baire}[1][\omega]{{}^\omega#1}
\newcommand{\cantor}{\baire[2]}
\newcommand{\xprod}{\prod\nolimits^*}
\newcommand{\predby}{\sqsubset}
\newcommand{\rel}[1][r]{\mathrel{\mathbf{#1}}}
\newcommand{\vfrak}{\mathfrak{v}}
	\definecolor{ultramarineblue}{rgb}{0.25, 0.4, 0.96}
\definecolor{cornellred}{rgb}{0.7, 0.11, 0.11}
\definecolor{cobalt}{rgb}{0.0, 0.28, 0.67}
\definecolor{bleudefrance}{rgb}{0.19, 0.55, 0.91}
\definecolor{darkblue}{rgb}{0.0, 0.0, 0.55}
\definecolor{ferrarired}{rgb}{1.0, 0.11, 0.0}
\definecolor{brandeisblue}{rgb}{0.0, 0.44, 1.0}
\definecolor{azure(colorwheel)}{rgb}{0.0, 0.5, 1.0}
\definecolor{aqua}{rgb}{0.0, 1.0, 1.0}
\definecolor{aguamarina}{cmyk}{0.85,0,0.33,0}
\definecolor{cafe}{cmyk}{0,0.81,1,0.60}
\definecolor{capri}{rgb}{0.0, 0.75, 1.0}
\definecolor{canela}{cmyk}{0.14,0.42,0.56,0}
\definecolor{darkgray}{cmyk}{0,0,0,0.50}
\definecolor{emerald}{cmyk}{0.91,0,0.88,0.12}
\definecolor{fresa}{cmyk}{0,1,0.50,0}
\definecolor{gold}{cmyk}{0,0.10,0.84,0}
\definecolor{lightgray}{cmyk}{0,0,0,0.30}
\definecolor{marron}{cmyk}{0,0.72,1,0.45}
\definecolor{melon}{cmyk}{0,0.29,0.84,0}
\definecolor{ladri}{cmyk}{0,0.77,0.87,0}
\definecolor{olive}{cmyk}{0.64,0,0.95,0.40}
\definecolor{orange}{cmyk}{0,0.42,1,0}
\definecolor{peach}{cmyk}{0,0.46,0.50,0}
\definecolor{pink}{cmyk}{0,0.10,0.10,0}
\definecolor{orange}{cmyk}{0,0.42,1,0}
\definecolor{pine}{cmyk}{0.92,0,0.59,0.25}
\definecolor{purple}{cmyk}{0.45,0.86,0,0}
\definecolor{violet}{cmyk}{0.07,0.90,0,0.34}
\definecolor{craneorange}{RGB}{252,187,6}
\definecolor{red(ncs)}{rgb}{0.77, 0.01, 0.2}
\definecolor{aguamarina}{cmyk}{0.85,0,0.33,0}
\definecolor{cafe}{cmyk}{0,0.81,1,0.60}
\definecolor{canela}{cmyk}{0.14,0.42,0.56,0}
\definecolor{darkgray}{cmyk}{0,0,0,0.50}
\definecolor{emerald}{cmyk}{0.91,0,0.88,0.12}
\definecolor{fresa}{cmyk}{0,1,0.50,0}
\definecolor{gold}{cmyk}{0,0.10,0.84,0}
\definecolor{lightgray}{cmyk}{0,0,0,0.30}
\definecolor{marron}{cmyk}{0,0.72,1,0.45}
\definecolor{melon}{cmyk}{0,0.29,0.84,0}
\definecolor{ladri}{cmyk}{0,0.77,0.87,0}
\definecolor{olive}{cmyk}{0.64,0,0.95,0.40}
\definecolor{orange}{cmyk}{0,0.42,1,0}
\definecolor{peach}{cmyk}{0,0.46,0.50,0}
\definecolor{pink}{cmyk}{0,0.10,0.10,0}
\definecolor{orange}{cmyk}{0,0.42,1,0}
\definecolor{pine}{cmyk}{0.92,0,0.59,0.25}
\definecolor{purple}{cmyk}{0.45,0.86,0,0}
\definecolor{violet}{cmyk}{0.07,0.90,0,0.34}
\newcommand{\lqq}{\textup{``}}
\newcommand{\rqq}{\textup{''}}
\DeclareSymbolFont{extraup}{U}{zavm}{m}{n}
\DeclareMathSymbol{\varheart}{\mathalpha}{extraup}{86}
\DeclareMathSymbol{\vardiamond}{\mathalpha}{extraup}{87}
\definecolor{dodger}{rgb}{0.0,0.5,1.0}
\definecolor{amber}{rgb}{1.0,0.49,0.0}
\definecolor{ogreen}{RGB}{107,142,35}
\title[Constant prediction and evasion number]{Constant prediction and evasion number, I: Generalization and variants}
\author{Miguel A. Cardona}
\address[Miguel A. Cardona]{Einstein Institute of Mathematics\\
Edmond J. Safra Campus, Givat Ram\\
The Hebrew University of Jerusalem\\
Jerusalem, 91904, Israel}
\email{\href{mailto:miguel.cardona@mail.huji.ac.il}{miguel.cardona@mail.huji.ac.il}}
\urladdr{\url{https://sites.google.com/view/miacardonamo}}
\author{Miroslav Repick\'y}
\address[Miroslav Repick\'y]{Mathematical Institute\\
Slovak Academy of Sciences\\
Gre\v{s}\'akova~6\\
040\,01 Ko\v{s}ice\\
Slovak Republic}
\email{\href{mailto:repicky@saske.sk}{repicky@saske.sk}}
\thanks{The first would like to thank the Israel Science Foundation for partially supporting this research by grant 2320/23 (2023-2027); and the second author was supported by the grant VEGA 2/0104/24 of the Slovak Grant Agency VEGA}
\subjclass[2020]{03E05, 03E15, 03E17, 03E35, 03E40}
\keywords{Cardinal characteristics of the continuum, constant evasion number, constant prediction number, iterated forcing}
\definecolor{sub0}{RGB}{29,32,137}
\definecolor{sub1}{RGB}{1,71,157}
\definecolor{sub2}{RGB}{1,104,183}
\definecolor{sub3}{RGB}{0,160,234}
\definecolor{sug}{RGB}{0,154,68}
\definecolor{suy}{RGB}{208,219,1}
\begin{document}

\makeatletter
\def\@roman#1{\romannumeral #1}
\newcommand{\startlist}{\ \@beginparpenalty=10000}
\makeatother

\newcounter{enuAlph}
\renewcommand{\theenuAlph}{\Alph{enuAlph}}

\numberwithin{equation}{section}


\theoremstyle{plain}
\newtheorem{theorem}{Theorem}[section]
\newtheorem{corollary}[theorem]{Corollary}
\newtheorem{lemma}[theorem]{Lemma}
\newtheorem{mainlemma}[theorem]{Main Lemma}
\newtheorem{proposition}[theorem]{Proposition}
\newtheorem{clm}[theorem]{Claim}
\newtheorem{fact}[theorem]{Fact}
\newtheorem{exer}[theorem]{Exercise}
\newtheorem{question}[theorem]{Question}
\newtheorem{problem}[theorem]{Problem}
\newtheorem{subclm}[theorem]{Subclaim}
\newtheorem{conjecture}[theorem]{Conjecture}
\newtheorem{assumption}[theorem]{Assumption}
\newtheorem{hopeth}[theorem]{Hopeful Theorem}
\newtheorem{hopele}[theorem]{Hopeful Lemma}
\newtheorem{discussion}[theorem]{Discussion}
\newtheorem{remark}[theorem]{Remark}
\newtheorem{challenging}[enuAlph]{Main challenging}
\newtheorem{teorema}[enuAlph]{Theorem}
\newtheorem*{mainthm}{Main Theorem}
\newtheorem*{thm}{Theorem}
\newtheorem*{corolario}{Corollary}

\theoremstyle{definition}
\newtheorem{definition}[theorem]{Definition}
\newtheorem{example}[theorem]{Example}
\newtheorem{notation}[theorem]{Notation}
\newtheorem{context}[theorem]{Context}
\newtheorem*{defi}{Definition}
\newtheorem*{acknowledgements}{Acknowledgements}

\def\sectionautorefname{Section}
\def\subsectionautorefname{Subsection}


\begin{abstract}
Using the concept of constant evasion to different sorts of suitable binary relations,  we establish many cardinal invariants derived from the established cardinal invariants $\mathfrak{e}^\mathrm{const}_{n}$ and $\mathfrak{v}^\mathrm{const}_{n}$, called the constant evasion number and the constant prediction number. We formulate several limits and consistency results pertaining to them. 
\end{abstract}
\maketitle


\makeatother

\section{Introduction}\label{intro}

This research forms part of the study of evading and predicting theory, a combinatorial framework originally introduced by Blass~\cite{blasse}. Since its introduction, this area has been extensively developed by several authors, including ~\cite{BrendlevasionI, BreIII,BreShevaIV,kamoeva,kamopred,kadinfgam,Kadagenpredi,BreGarV,CRS}. These works have deepened the understanding of the combinatorial and set-theoretic aspects of evading and predicting, establishing numerous connections with other cardinal invariants, e.g., those in Cichon's diagram and forcing constructions.

A refinement of these notions, termed constant evasion and constant prediction, was introduced by Kamo~\cite{kamoeva}. Here, the requirements for successful evasion or prediction are strengthened: instead of asymptotic correctness, one demands that within every bounded interval of prescribed length, there exists a correct prediction or evasion. This strengthening yields a sharper combinatorial distinction and gives rise to two cardinal invariants: $\efrak^\const_{n}$ and $\vfrak^\const_{n}$, the constant evasion and the constant prediction numbers, respectively, are presented below.

Let $n\leq\omega$ and call a function $\sigma\colon{}^{<\omega}n\to n$ a predictor. By $\Sigma_n$, we denote the set of all predictors. Say \emph{$\sigma$ constantly predicts a real $f\in{}^{\omega}n$}, written as $f\predby^\cpr_{=}\sigma$ 
\begin{align*}
f\predby^\cpr_{=}\sigma&\text{ iff }
\exists k\in\omega\ \forall i\in\omega\ \exists j\in[i,i+k)\
f(j)=\sigma(f\frestr j).
\end{align*}
We define the constant evasion number
\[\efrak^\const_{n}:=\min\set{|F|}{F\subseteq{}^{\omega}n\setand\neg\exists\sigma\in\Sigma_n\ \forall f\in F\colon f\predby^\cpr_{=}\sigma}\]
and the constant prediction number
\[\vfrak^\const_{n}:=\min\set{|S|}{S\subseteq\Sigma_n\setand\forall f\in{}^{\omega}n\ \exists\sigma\in S\colon f\predby^\cpr_{=}\sigma}.\]

Inspired by the previous cardinal invariants, this paper aims to generalize and establish different versions of these cardinals. Afterwards, we derived some bounds and consistency results for them. Furthermore, using these new cardinals, we provide some characterization cardinal invariants from Cicho\'n's diagram.

In addition to the already mentioned cardinals $\efrak_n^\const$ and $\vfa_n^\const$, we shall refer to a few additional well-known cardinal invariants below. 

Given a formula $\phi$, $\forall^\infty n<\omega\colon \phi$ means that all but finitely many natural numbers satisfy~$\phi$; $\exists^\infty n<\omega\colon \phi$ means that infinitely many natural numbers satisfy $\phi$. For $f,g\in\baire$ define
\[f\leq^*g\text{ iff } \forall^\infty n\in\omega\colon f(n)\leq g(n).\]
We recall
\begin{align*}
\bfrak &:=\min\set{|F|}{F\subseteq\baire\setand\forall g\in\baire\ \exists f\in F:f\not\leq^* g},\\
\dfrak &:=\min\set{|D|}{D\subseteq\baire\setand\forall g\in\baire\ \exists f\in D:g\leq^* f}
\end{align*}
denote the \textit{bounding number} and the \textit{dominating number}, respectively;
and $\cfrak=2^{\aleph_0}$.

Let $\Iwf$ be an ideal of subsets of $X$ such that $\{x\}\in\Iwf$ for all $x\in X$. Throughout this paper, we demand that all ideals satisfy this latter requirement. We recall the following four \emph{cardinal invariants associated with $\Iwf$}:
\begin{align*}
\add(\Iwf)&=\min\largeset{|\Jwf|}{\Jwf\subseteq\Iwf,\ \bigcup\Jwf\notin\Iwf},\\
\cov(\Iwf)&=\min\largeset{|\Jwf|}{\Jwf\subseteq\Iwf,\ \bigcup\Jwf=X},\\
\non(\Iwf)&=\min\set{|A|}{A\subseteq X,\ A\notin\Iwf},\quad\text{and}\\
\cof(\Iwf)&=\min\set{|\Jwf|}{\Jwf\subseteq\Iwf,\ \forall A\in\Iwf\ \exists B\in\Jwf\colon A\subseteq B}.
\end{align*}
These cardinals are referred to as the \emph{additivity, covering, uniformity} and \emph{cofinality of $\Iwf$}, respectively.

Specifically, we are interested in these cardinal invariants for the upcoming ideals:

Let $X$ be a Polish space and $\Bwf(X)$ the $\sigma$-algebra of Borel subsets of $X$.
\begin{enumerate}[label= \rm (\arabic*)]

\item If $X$ is perfect then $\Mwf(X)\eqT \Mwf(\R)$ and $\Cbf_{\Mwf(X)}\eqT \Cbf_{\Mwf(\R)}$, where $\Mwf(X)$ denotes the ideal of meager subsets of $X$ (see~\cite[Ex.~8.32 \&~Thm.~15.10]{Ke2}). Therefore, the cardinal characteristics associated with the meager ideal are independent of the perfect Polish space used to calculate it. When the space is clear from the context, we write $\Mwf$ for the meager ideal.

\item Assume that $\mu\colon\Bwf(X)\to [0,\infty]$ is a $\sigma$-finite measure such that $\mu(X)>0$ and every singleton has measure zero.
Denote by $\Nwf(\mu)$ the ideal generated by the $\mu$-measure zero sets, which is also denoted by $\Nwf(X)$ when the measure on $X$ is clear.
Then $\Nwf(\mu)\eqT \Nwf(\Lb)$ and $\Cbf_{\Nwf(\mu)}\eqT \Cbf_{\Nwf(\Lb)}$ where $\Lb$ denotes the Lebesgue measure on $\R$ (see~\cite[Thm.~17.41]{Ke2}). Therefore, the four cardinal characteristics associated with both measure zero ideals are the same. 

When the measure space is understood, we write $\Nwf$ for the null ideal.

\item For $b\in\baire$ we write $\prod b=\prod_{n<\omega}b(n)$. We denote by $\Ewf(\prod b)$ the ideal generated by the $F_\sigma$ measure zero subsets of $\prod b$. Likewise, define $\Ewf(\R)$ and $\Ewf([0,1])$.
When $\prod b$ is perfect, the map $F_b\colon \prod b\to[0,1]$ defined by
\[F_b(x):=\sum_{n<\omega}\frac{x(n)}{\prod_{i\leq n}b(i)}\]
is a continuous onto function, and it preserves measure. Hence, this map preserves sets between $\Ewf(\prod b)$ and $\Ewf([0,1])$ via images and pre-images. Therefore, $\Ewf(\prod b)\eqT \Ewf([0,1])$ and $\Cbf_{\Ewf(\prod b)}\eqT\Cbf_{\Ewf([0,1])}$. We also have $\Ewf(\R)\eqT \Ewf([0,1])$ and $\Cbf_{\R}\eqT\Cbf_{\Ewf([0,1])}$, as well as $\Ewf(\baire)\eqT \Ewf(2^\omega)$ and $\Cbf_{\Ewf(\baire)}\eqT \Cbf_{\Ewf(2^\omega)}$ (see details in~\cite{GaMe}). It is well-known that $\Ewf\subseteq\Nwf\cap\Mwf$. Even more, it was proved that $\Ewf$ is a proper subideal of $\Nwf\cap\Mwf$ (see~\cite[Lemma 2.6.1]{BJ}). 
\end{enumerate}

When studying cardinal invariants and their order relations, Tukey's connections framework is practical since it simplifies the reasoning and instantly provides results for dual cardinals. For a general overview of this framework, see, e.g.,~\cite{Vojtas,blass}. framework. Here, we mostly follow the notation of the second reference.

We say that $\Rbf=\la X, Y, {\sqsubset}\ra$ is a \textit{relational system} if it consists of two non-empty sets $X$ and $Y$ and a relation $\sqsubset$.
\begin{enumerate}[label=(\arabic*)]
\item A set $F\subseteq X$ is \emph{$\Rbf$-bounded} if $\exists\, y\in Y\ \forall\, x\in F\colon x \sqsubset y$. 
\item A set $E\subseteq Y$ is \emph{$\Rbf$-dominating} if $\forall\, x\in X\ \exists\, y\in E\colon x \sqsubset y$. 
\end{enumerate}
We associate two cardinal invariants with this relational system:
\begin{itemize}
\item[{}] $\bfrak(\Rbf):=\min\set{|F|}{F\subseteq X \text{ is }\Rbf\text{-unbounded}}$ the \emph{bounding number of $\Rbf$}, and

\item[{}] $\dfrak(\Rbf):=\min\set{|D|}{D\subseteq Y \text{ is } \Rbf\text{-dominating}}$ the \emph{dominating number of $\Rbf$}.
\end{itemize}

The \emph{dual of $\Rbf$} is the relational system $\Rbf^\perp:=\la Y,X,{\sqsubset^\perp}\ra$ where $y \sqsubset^\perp x$ means $x \not\sqsubset y$. Note that $\bfrak(\Rbf^\perp)=\dfrak(\Rbf)$ and $\dfrak(\Rbf^\perp)=\bfrak(\Rbf)$.

For relational systems $\Rbf=\la X,Y,{\sqsubset}\ra$ and $\Rbf'=\la X',Y',{\sqsubset'}\ra$, a pair $(\Psi_-,\Psi_+)$ is a \emph{Tukey connection from $\Rbf$ into $\Rbf'$} if 
$\Psi_-\colon X\to X'$ and $\Psi_+\colon Y'\to Y$ are functions satisfying 
\[\forall\, x\in X\ \forall\, y'\in Y'\colon \Psi_-(x) \sqsubset' y' \Rightarrow x \sqsubset \Psi_+(y').\] 
We say that $\Rbf$ is \emph{Tukey below} $\Rbf'$, denoted by $\Rbf\leqT\Rbf'$, if there is a Tukey connection from $\Rbf$ into $\Rbf'$, and we 
say that $\Rbf$ and $\Rbf'$ are \emph{Tukey equivalent}, denoted by $\Rbf\eqT\Rbf'$, if $\Rbf\leqT\Rbf'$ and $\Rbf'\leqT\Rbf$. It is well-known that $\Rbf\leqT\Rbf'$ implies $(\Rbf')^\perp\leqT \Rbf^\perp$, $\bfrak(\Rbf')\leq\bfrak(\Rbf)$ and $\dfrak(\Rbf)\leq\dfrak(\Rbf')$. Hence, $\Rbf\eqT\Rbf'$ implies $\bfrak(\Rbf')=\bfrak(\Rbf)$ and $\dfrak(\Rbf)=\dfrak(\Rbf')$. 

The paper is structured as follows. 

In~\autoref{sec:gen}, we introduce a natural generalization and variants on several cardinal invariants related to $\efrak_n^\const$ and $\vfrak_n^\const$. We also discuss the order relationship among them. 

In~\autoref{sec:idealsconst}, by thinking about the concepts of evading and predicting, we can study their associated $\sigma$-ideals on the reals. 

In~\autoref{sec:furgen}, building upon Kada's~\cite{kadinfgam} research on game theory with predictors, we introduce new invariants that are closely related to $\efrak_n^\const$ and $\vfrak_n^\const$.

In~\autoref{sec:consresults}, we prove several consistency results regarding the previous cardinal invariants. 

 In~\autoref{sec:openQ}, we delineates some open questions.

\section{Towards a general theory of constant evasion and prediction}\label{sec:gen}

This section aims to generalize the notions of constant evasion and constant prediction defined in \autoref{intro} and look at the corresponding cardinals. 

\begin{notation}
For $b\in\baire[(\omega+1)]$ with $b(n)>0$ for every $n\in\omega$. We fix the following terminology
\begin{enumerate}
\item Let 
\begin{align*}
&\xprod b=\set{f\in\baire}{\forall^\infty n\in\omega\ f(n)<b(n)}, \\
\qquad\Seq_n(b)=\prod_{k<n}b(k)&, \qquad
\Seq_{< k}(b)=\bigcup_{n\leq k}\Seq_n(b),
\qquad\Seq(b)=\bigcup_{n<\omega}\Seq_n(b).
\end{align*} 
\item A~function $\sigma\colon\Seq(b)\to\omega$ is called a~\emph{predictor} on~$\prod b$.
\item A~predictor $\sigma\colon\Seq(b)\to\omega$ is a~\emph{limited predictor}
on~$\prod b$, if $\sigma(s)\in b(n)$ for every $n<\omega$ and $s\in\Seq_n(b)$.
Let $\Sigma_b$ denote the set of limited predictors on~$\prod b$.
\end{enumerate} 
In particular, if $b(n)=K$ for $n<\omega$ where $2\leq K\leq\omega$,
then we write $\baire[K]$ and~$\Sigma_K$ instead of $\prod b$ and~$\Sigma_b$,
respectively;
$\Sigma_\omega$~is the set of predictors on~$\baire$.
\end{notation}

One way of generalizing the notions of constant evasion and constant prediction goes as follows.

\begin{definition}\label{def:pred}
Let $b\in\baire[(\omega+1\smallsetminus2)]$ and let $\rel$ be
a sequence ${\rel}=\seq{{\rel_n}}{n\in\omega}$ of
binary relations ${\rel}_n\subseteq b(n)\times b(n)$, $n\in\omega$, such that 
$\emptyset\ne{\rel}_n^{-1}[\{m\}]\ne b(n)$ for every $m\in b(n)$.
\begin{enumerate}
\item For $f\in\prod b$ and $\sigma\in\Sigma_b$, say
$\sigma$~\emph{constantly $\rel$-predicts}~$f$, written as $f\predby^\cpr_{\rel}\sigma$ 
\begin{align*}
f\predby^\cpr_{\rel}\sigma&\text{ iff }
\exists k\in\omega\ \forall i\in\omega\ \exists j\in[i,i+k)\
f(j)\rel_j\sigma(f\frestr j).
\\*
&\text{ iff }
\exists k\in\omega\ \forall^\infty i\in\omega\ \exists j\in[i,i+k)\
f(j)\rel_j\sigma(f\frestr j)
\end{align*}

\item We consider the following cardinal invariants related to the relational systems of
constant $\rel$-prediction 
\begin{align*}
\efrak^\const_{b,\rel}
&:=\min\set{|F|}{F\subseteq\prod b\setand\neg\exists\sigma\in\Sigma_b\ \forall f\in F\colon f\predby^\cpr_{\rel}\sigma},\\
\vfrak^\const_{b,\rel}
&:=\min\set{|S|}{S\subseteq\Sigma_b\setand\forall f\in\prod b\ \exists\sigma\in S\colon f\predby^\cpr_{\rel}\sigma}.
\end{align*}

\item Define the relational system $\Ebf^\cpr_{b,\rel}=\la\prod b,\Sigma_b,{\predby^\cpr_{\rel}}\ra$. Hence, $\efrak^\const_{b,\rel}=\bfrak(\Ebf^\cpr_{b,\rel})$ and $\vfrak^\const_{b,\rel}=\dfrak(\Ebf^\cpr_{b,\rel})$.
\end{enumerate}

In some cases, we simplify the notation
$\efrak^\const_{b,\rel}$ and $\vfrak^\const_{b,\rel}$ as follows:
If $b(n)=K$ for $n<\omega$, then we replace the index~$b$ with~$K$;
we omit the index~$K$ if $K=\omega$;
and we omit the index~$\rel$ if $\rel$ is~$=$. Likewise for $\Ebf^\cpr_{b,\rel}$. 
\end{definition}

In particular, we are interested in these cardinal invariants 
\begin{align*}
 &\efrak^\const_b=\bfrak\left(\Ebf^\cpr_{b}\right),
&&\vfrak^\const_b=\dfrak\left(\Ebf^\cpr_{b}\right),\\
 &\efrak^\const_{*b}=\bfrak\left(\xprod b,\Sigma_\omega,{\predby^\cpr_{=}}\right),
&&\vfrak^\const_{*b}=\dfrak\left(\xprod b,\Sigma_\omega,{\predby^\cpr_{=}}\right),\\
 &\efrak^\const_K=\bfrak(\Ebf^\cpr_{K}),
&&\vfrak^\const_K=\dfrak(\Ebf^\cpr_{K}),\\
 &\efrak^\const=\bfrak(\Ebf^\cpr),
&&\vfrak^\const=\dfrak(\Ebf^\cpr),\\
 &\efrak^\const_2=\bfrak(\Ebf^\cpr_2),
&&\vfrak^\const_2=\dfrak(\Ebf^\cpr_2),\\
 &\efrak^\const_{b,\ne}=\bfrak\left(\Ebf^\cpr_{b,\ne}\right),
&&\vfrak^\const_{b,\ne}=\dfrak\left(\Ebf^\cpr_{b,\ne}\right),\\
 &\efrak^\const_{K,\ne}=\bfrak\left(\Ebf^\cpr_{K,\ne}\right),
&&\vfrak^\const_{K,\ne}=\dfrak\left(\Ebf^\cpr_{K,\ne}\right),\\
 &\efrak^\const_{\ne}=\bfrak\left(\Ebf^\cpr_{\ne}\right),
&&\vfrak^\const_{\ne}=\dfrak\left(\Ebf^\cpr_{\ne}\right),\\*
 &\efrak^\const_\leq=\bfrak(\Ebf^\cpr_{\leq}),
&&\vfrak^\const_\leq=\dfrak(\Ebf^\cpr_{\leq}).
\end{align*}
In~\cite{BreIII}, the relation $x\not\predby^\cpr_{\leq}\sigma$ was studied and was read as ``$x$ strongly evades~$\sigma$'' but not its cardinals were studied. 

According to \autoref{b0}~\ref{b0:2},
$\la\xprod b,\Sigma_\omega,{\predby^\cpr_{=}}\ra\eqT\Ebf_{b}^\cpr$ and
therefore
$\efrak^\const_{*b}=\efrak^\const_b$ and $\vfrak^\const_{*b}=\vfrak^\const_b$.
On the other hand, according to \autoref{b3}, the analogous
cardinal invariants $\efrak^\const_{*b,\ne}$ and $\vfrak^\const_{*b,\ne}$ are
not defined.
It may be reasonable to work only with limited predictors to avoid trivial prediction relations.

\begin{notation}
Denote ${\equiv_b}=\seq{{\equiv_{b(n)}}}{n<\omega}$,
${\not\equiv_b}=\seq{{\not\equiv_{b(n)}}}{n<\omega}$, and
${\leq_b}=\seq{{\leq}\cap{\equiv_{b(n)}}}{n\in\omega}$
where for $0<m\leq\omega$, $i\equiv_m j$ if $i\equiv j\mod m$.
\end{notation}

Note that ${\equiv_\omega}$ and ${\leq_\omega}$ coincide with~$=$;
${\not\equiv_\omega}$ coincides with~$\ne$;
$\leq_1$ coincides with~$\le$.
Also note that the relational systems $\la\prod b,\Sigma_b,{\predby^{A}_{=}}\ra$ and
$\la\prod b,\Sigma_b,{\predby^{A}_{\neq}}\ra$ are instances of $\la\prod b,\Sigma_b,{\predby^{A}_{\equiv_d}}\ra$ and
$\la\prod b,\Sigma_b,{\predby^{A}_{\not\equiv_d}}\ra$
for $d=b$; always we can assume that $d\leq b$.

We will need the projection $\pi_b$ of functions, the extension $e_b$ and the restriction $r_b$ of predictors $\pi_b\colon{}^{<\omega}\omega\cup\baire\to\Seq(b)\cup\prod b$, $e_b\colon\Sigma_b\to\Sigma_\omega$, $r_b\colon\Sigma_\omega\to\Sigma_b$, defined by
\begin{align*}
&\pi_b(f)(j)=(f(j)\bmod b(j)),
&&\text{$f\in{}^{<\omega}\omega\cup\baire$ and $j\in\dom(f)$,}\\
&r_b(\tau)(t)=(\tau(t)\bmod b(|t|)),
&&\text{$\tau\in\Sigma_\omega$ and $t\in\Seq(b)$},\\
&e_b(\sigma)(s)=\sigma(\pi_b(s)),
&&\text{$\sigma\in\Sigma_b$ and $s\in{}^{<\omega}\omega$}
\end{align*}
(here, $(m\bmod\omega)=m$ and $(m\bmod1)=0$ for $m\in\omega$).

The study of relational
systems for the relation~$\predby^\cpr_{=}$ can be reduced to systems with
codomain~$\Sigma_\omega$ thanks to the following lemma.

\begin{lemma}\label{b0}
Let $b',b\in\baire[(\omega+1\smallsetminus2)]$, $b'\leq^*b$, and\/ 
$\prod b'\subseteq F\subseteq\xprod b'$
\begin{enumerate}[label=\rm(\arabic*)]
\item\label{b0:0} $\Ebf^\cpr_{b}\eqT\Ebf^\cpr$ and $\Ebf^\cpr_{b,\ne}\eqT\Ebf^\cpr_{\ne}$
\item\label{b0:1}
$\Ebf^\cpr_{b}\eqT
\la\prod b,\Sigma_\omega,{\predby^{\cpr}_{=}}\ra$ and\/ $\la\prod b,\Sigma_\omega,{\predby^{\cpr}_{\rel}}\ra\leqT
\Ebf^\cpr_{b,\rel}$.
\item\label{b0:2}
$\Ebf^\cpr_{b'}\eqT
\la F,\Sigma_\omega,{\predby^{\cpr}_{=}}\ra\leqT
\Ebf^\cpr_{b}$.
\item\label{b0:3}
$\la\prod b',\Sigma_\omega,{\predby^{\cpr}_{\neq}}\ra\leqT
\la F,\Sigma_\omega,{\predby^{\cpr}_{\neq}}\ra\leqT
\Ebf^\cpr_{b,\ne}\leqT
\Ebf^\cpr_{b',\ne}$.
\end{enumerate}
\end{lemma}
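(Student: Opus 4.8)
\textbf{Proof plan for Lemma~\ref{b0}.}
The plan is to exhibit explicit Tukey connections built from the maps $\pi_b$, $e_b$, $r_b$ introduced above, exploiting that $\pi_b$ is a retraction of $\baire$ onto $\prod b$ compatible with predictors, and that the constant-prediction relation $\predby^\cpr_{=}$ is insensitive to finite modifications of the domain $b$ (this is exactly the ``$\forall^\infty i$'' reformulation in \autoref{def:pred}). The four items are really one mechanism applied in slightly different directions, so I would set up the basic estimates once and then read off each part.

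First I would prove \ref{b0:1}. For the Tukey connection $\Ebf^\cpr_b\leqT\la\prod b,\Sigma_\omega,{\predby^\cpr_{=}}\ra$ take $\Psi_-=\id_{\prod b}$ and $\Psi_+=r_b\colon\Sigma_\omega\to\Sigma_b$; one checks that for $f\in\prod b$ and $\tau\in\Sigma_\omega$, if $f(j)=\tau(f\frestr j)$ then, since $f(j)<b(j)$, also $f(j)=(\tau(f\frestr j)\bmod b(j))=r_b(\tau)(f\frestr j)$, so $f\predby^\cpr_{=}\tau\Rightarrow f\predby^\cpr_{=}r_b(\tau)$. For the reverse direction $\la\prod b,\Sigma_\omega,{\predby^\cpr_{=}}\ra\leqT\Ebf^\cpr_b$ take $\Psi_-=\id$ and $\Psi_+=e_b\colon\Sigma_b\to\Sigma_\omega$; here if $f\in\prod b$ and $\sigma\in\Sigma_b$ with $f(j)\mathrel{=}\sigma(f\frestr j)$ then $e_b(\sigma)(f\frestr j)=\sigma(\pi_b(f\frestr j))=\sigma(f\frestr j)$ because $f$ already takes values below $b$. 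The same two maps, composed with $\pi_b$ on the domain side, give $\la\prod b,\Sigma_\omega,{\predby^\cpr_{\rel}}\ra\leqT\Ebf^\cpr_{b,\rel}$: send $f\in\prod b$ to itself and $\sigma\in\Sigma_b$ to $e_b(\sigma)$, and use that $\rel_j\subseteq b(j)\times b(j)$ so the relation is unaffected by passing to $e_b(\sigma)$. Part \ref{b0:0} is the special case $b'$ arbitrary, $b$ constant equal to $\omega$ of the more general statements, once one notes $\Sigma_\omega=\Sigma_b$ when $b\equiv\omega$; alternatively it follows from \ref{b0:2} and \ref{b0:3} below by taking $b'=b$ and recalling $\predby^\cpr_{=},\predby^\cpr_{\ne}$ are instances of $\predby^\cpr_{\rel}$.

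Next, \ref{b0:2} and \ref{b0:3} handle the passage between different bounds $b'\leq^* b$ together with the intermediate set $F$ with $\prod b'\subseteq F\subseteq\xprod b'$. The key reduction is: since $b'\leq^* b$, there is $N$ with $b'(n)\leq b(n)$ for $n\geq N$; as the constant-prediction relation quantifies $\forall^\infty i$, altering $b'$ on finitely many coordinates changes none of the relational systems up to $\eqT$ (spell this out once as a small sublemma, using the shift-by-$N$ bijection on $\baire$ which is an isomorphism of the prediction structure). Having done that, I may assume $b'\leq b$ pointwise, so $\prod b'\subseteq\prod b\subseteq\xprod b'$. For the chain in \ref{b0:2}: the middle $\eqT$ is \ref{b0:1} applied to $b'$ combined with the observation that $\prod b'\subseteq F$ and any $f\in F$ agrees $\forall^\infty n$ with an element of $\prod b'$-range, so $\la F,\Sigma_\omega,{\predby^\cpr_{=}}\ra\eqT\la\prod b',\Sigma_\omega,{\predby^\cpr_{=}}\ra$ via $\pi_{b'}$; the final $\leqT\Ebf^\cpr_b$ uses $\pi_b$ on the domain side (mapping $F\subseteq\xprod b'\subseteq\xprod b$ into $\prod b$ after a finite shift, noting $\pi_b$ is the identity on values already below $b$) and $e_b$ on the predictor side, checking that prediction of $\pi_b(f)$ by $\sigma$ pulls back to prediction of $f$ by $e_b(\sigma)$ because $\pi_b$ only alters the finitely many coordinates where $f$ escaped $b$. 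For \ref{b0:3} the same maps work but one must track the direction of the relation: $\ne$ is preserved under $\pi_b$ and $e_b$ just as $=$ is (for values in the relevant product), and the extra link $\Ebf^\cpr_{b,\ne}\leqT\Ebf^\cpr_{b',\ne}$ — which reverses the order compared with \ref{b0:2}! — comes from the fact that on the smaller product $\prod b'$ a limited predictor into $\Sigma_{b'}$ has \emph{fewer} values to avoid, so a witness for non-prediction over $b'$ restricts to one over $b$; concretely one uses $\pi_{b'}$ on the domain side and an inclusion-type map $\Sigma_{b'}\hookrightarrow\Sigma_b$ (compose with $r_b$, or observe $\Sigma_{b'}\subseteq\Sigma_b$ after the shift) on the predictor side.

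The main obstacle, and the only place genuine care is needed, is the bookkeeping around the two facts that (i) $\predby^\cpr_{=}$ and $\predby^\cpr_{\ne}$ ignore finite changes to $b$ and to $f$, so all the ``$b'\leq^* b$'' hypotheses can be reduced to ``$b'\leq b$ pointwise'' without loss, and (ii) that the $\ne$-relation behaves contravariantly in $b$ while the $=$-relation behaves covariantly — this is what makes the chain in \ref{b0:3} run $\Ebf^\cpr_{b,\ne}\leqT\Ebf^\cpr_{b',\ne}$ rather than the other way. Once the shift-by-$N$ isomorphism is recorded as a sublemma and the four pairs of maps $(\id,r_b)$, $(\id,e_b)$, $(\pi_b,e_b)$, $(\pi_{b'},{\hookrightarrow})$ are written down, verifying the Tukey-connection inequality $\Psi_-(x)\sqsubset' y'\Rightarrow x\sqsubset\Psi_+(y')$ in each case is a one-line computation of the form ``apply $\bmod\,b$ to a value already below $b$ and nothing happens,'' so I would not belabour those.
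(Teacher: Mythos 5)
Your handling of items \ref{b0:1}, \ref{b0:2} and \ref{b0:3} is essentially the paper's own proof: the Tukey connections you name are exactly the pairs $(\id,r_b)$, $(\id,e_b)$, $(\pi_b\frestr F,e_b)$ and $(\pi_{b'}\frestr\prod b,\,r_b\circ e_{b'})$ used in the paper, and the verifications are the same one-line ``the value is eventually below $b'(j)\leq b(j)$, so reducing modulo $b$ changes nothing'' computations. Two small remarks: the preliminary reduction of $b'\leq^*b$ to pointwise $b'\leq b$ is unnecessary (the paper just runs the $\forall^\infty$-form of $\predby^\cpr$ directly, and your shift map is not literally an isomorphism of the prediction structure, since a predictor sees the whole initial segment), and the containment $\prod b\subseteq\xprod b'$ you assert after that reduction is false (take $b'\equiv2$, $b\equiv3$); neither slip is load-bearing, because what you actually use later, namely $F\subseteq\xprod b'\subseteq\xprod b$ and the eventual triviality of $\pi_b$, $e_b$, $r_b$ on values below $b'$, is correct.

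The genuine gap is item \ref{b0:0}. Neither of your proposed derivations works. Taking $b$ to be the constant function $\omega$ only yields the trivial identity $\Ebf^\cpr_b=\Ebf^\cpr$ in that one case, and instantiating \ref{b0:2}--\ref{b0:3} with the constant-$\omega$ function as the larger bound gives only one of the two inequalities needed for each $\eqT$: namely $\Ebf^\cpr_b\leqT\Ebf^\cpr$ and $\Ebf^\cpr_{\ne}\leqT\Ebf^\cpr_{b,\ne}$. The reverse directions $\Ebf^\cpr\leqT\Ebf^\cpr_b$ and $\Ebf^\cpr_{b,\ne}\leqT\Ebf^\cpr_{\ne}$ do not follow from \ref{b0:2}--\ref{b0:3}, and they are not provable at all: the first would make $\efrak^\const_b=\efrak^\const$ a theorem for every $b$, contradicting \autoref{b2} together with \autoref{lemma:b10}, which forces $\efrak^\const<\efrak^\const_\ubd\leq\efrak^\const_b$; the second would make $\efrak^\const_{b,\ne}=\efrak^\const_{\ne}=\non(\Mwf)$ a theorem, which fails consistently since $\efrak^\const_{2,\ne}=\efrak^\const_2\leq\non(\Ewf)$ by \autoref{b2} and $\non(\Ewf)<\non(\Mwf)$ holds, e.g., in the random real model. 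The underlying obstruction is that for an arbitrary $g\in\baire$, $=$-prediction of $\pi_b(g)$ does not transfer back to $=$-prediction of $g$, only to $\equiv_b$-prediction; accordingly, what \ref{b0:0} is actually meant to say, and what the paper proves via the pairs $(\id,r_b)$ and $(\pi_b\frestr\baire,e_b)$, are the equivalences $\Ebf^\cpr_b\eqT\la\baire,\Sigma_\omega,{\predby^\cpr_{\equiv_b}}\ra$ and $\Ebf^\cpr_{b,\ne}\eqT\la\baire,\Sigma_\omega,{\predby^\cpr_{\not\equiv_b}}\ra$, the form that is cited later in the proof of \autoref{leq_b}. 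Your plan never establishes these (the connection $(\pi_b\frestr\baire,e_b)$ applied to all of $\baire$, where $\pi_b$ changes infinitely many values, appears nowhere in it), so you should either prove the $\equiv_b$/$\not\equiv_b$ version directly or flag that the printed statement of \ref{b0:0} cannot be read literally.
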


\begin{proof}
\ref{b0:0}--\ref{b0:1}:
The pairs of functions
\begin{align*}
&(\id,r_b)\colon
\Ebf^\cpr_{b}\to\la\baire,\Sigma_\omega,{\predby^{\cpr}_{\equiv_b}}\ra,
&&(\pi_b\frestr\baire,e_b)\colon
\la\baire,\Sigma_\omega,{\predby^{\cpr}_{\equiv_b}}\ra\to\Ebf^\cpr_{b},
\\
&(\id,r_b)\colon
\Ebf^\cpr_{b,\ne}\to\la\baire,\Sigma_\omega,{\predby^{\cpr}_{\not\equiv_b}}\ra,
&&(\pi_b\frestr\baire,e_b)\colon
\la\baire,\Sigma_\omega,{\predby^{\cpr}_{\not\equiv_b}}\ra\to\Ebf^\cpr_{b,\ne},\\
&(\id,r_b)\colon
\Ebf^\cpr_{b}\to\la\prod b,\Sigma_\omega,{\predby^{\cpr}_{=}}\ra,
&&(\id,e_b)\colon
\la\prod b,\Sigma_\omega,{\predby^{\cpr}_{\rel}}\ra\to\Ebf^\cpr_{b,\rel},
\end{align*}
where $\id$ denotes the identity on~$\prod b$,
are Tukey connections
because for every $f\in\prod b$, $g\in\baire$, $\sigma\in\Sigma_b$, $\tau\in\Sigma_\omega$,
\begin{align*}
&f\predby^{\cpr}_{\equiv_b}\tau\Rightarrow f\predby^{\cpr}_{=}r_b(\tau),
&&\pi_b(g)\predby^{\cpr}_{=}\sigma\Rightarrow g\predby^{\cpr}_{\equiv_b}e_b(\sigma),\\
&f\predby^{\cpr}_{\not\equiv_b}\tau\Rightarrow f\predby^{\cpr}_{\neq}r_b(\tau),
&&\pi_b(g)\predby^{\cpr}_{\neq}\sigma\Rightarrow g\predby^{\cpr}_{\not\equiv_b}e_b(\sigma),\\
&f\predby^{\cpr}_{=}\tau\Rightarrow f\predby^{\cpr}_{=}r_b(\tau),
&&f\predby^{\cpr}_{\rel}\sigma\Rightarrow f\predby^{\cpr}_{\rel}e_b(\sigma).
\end{align*}

\ref{b0:2}--\ref{b0:3}:
Thanks to \ref{b0:1} we have
$\la\prod b',\Sigma_{b'},{\predby^{\cpr}_{=}}\ra\eqT\la\prod b',\Sigma_\omega,{\predby^{\cpr}_{=}}\ra$.
Therefore, it is enough to show (for any~$\rel$)
\begin{enumerate}[label=\rm$\bullet_\arabic*$]
\item\label{tk:1} $\la\prod b',\Sigma_\omega,{\predby^{\cpr}_{\rel}}\ra\leqT
\la F,\Sigma_\omega,{\predby^{\cpr}_{\rel}}\ra$,
\item\label{tk:2} $\la F,\Sigma_\omega,{\predby^{\cpr}_{=}}\ra\leqT\Ebf^\cpr_{b'}$,
\item\label{tk:3} $\la F,\Sigma_\omega,{\predby^{\cpr}_{\rel}}\ra\leqT\Ebf^\cpr_{b,\rel}$, and 
\item\label{tk:4} $\Ebf^\cpr_{b,\rel}\leqT
\Ebf^\cpr_{b',\rel}$.
\end{enumerate}
\eqref{tk:1} is witnessed by inclusion maps and \eqref{tk:2}~is an instance of~\eqref{tk:3} for $b=b'$.
We prove~\eqref{tk:3} and~\eqref{tk:4} by showing that the pairs of functions
\begin{enumerate}[label=\rm($\boxdot_\arabic*$)]
\item\label{ptk:1} $(\pi_b\frestr F,e_b)\colon\la F,\Sigma_\omega,{\predby^{\cpr}_{\rel}}\ra\to
\Ebf^\cpr_{b,\rel}$, 
\item\label{ptk:2} $(\pi_{b'}\frestr\prod b,e_{b',b})\colon
\Ebf^\cpr_{b,\ne}\to
\Ebf^\cpr_{b',\ne}$.
\end{enumerate}

where $e_{b',b}(\tau)=r_b(e_{b'}(\tau))$ for $\tau\in\Sigma_{b'}$, are Tukey connections. 

To see~\ref{ptk:1}, let $f\in F$ and $\sigma\in\Sigma_b$ be such that
$\pi_b(f)\predby^{\cpr}_{\rel}\sigma$. Then for some $k\in\omega$,
\begin{align*}
\forall i\in\omega\bsp\exists j\in[i,i+k)\
\pi_b(f)(j)&\rel_j\sigma(\pi_b(f)\frestr j)=
\sigma(\pi_b(f\frestr j))=e_b(\sigma)(f\frestr j).
\end{align*}
Since $\forall^\infty j\in\omega\bsp f(j)<b'(j)\leq b(j)$, then
$\forall^\infty j\in\omega\bsp\pi_b(f)(j)=f(j)\in b(j)$ and thus
\[
\forall^\infty i\in\omega\bsp\exists j\in[i,i+k)\
f(j)\rel_j e_b(\sigma)(f\frestr j),
\]
and hence, $f\predby^{\cpr}_{\rel}e_b(\sigma)$. Lastly, to show~\ref{ptk:2}, let $g\in\prod b$ and $\tau\in\Sigma_{b'}$ be such that
$\pi_{b'}(g)\predby^{A}_{\neq}\tau$.
Then for some $k\in\omega$,
\begin{align*}
\forall i\in\omega\bsp\exists j\in[i,i+k)\
\pi_{b'}(g)(j)&\ne\tau(\pi_{b'}(g)\frestr j)=
\tau(\pi_{b'}(g\frestr j)).
\end{align*}
Since $b'\leq^*b$, $\forall^\infty j\in\omega\bsp\tau(\pi_{b'}(g\frestr j))=e_{b'}(\tau)(g\frestr j)=e_{b',b}(\tau)(g\frestr j)$, and hence,
\[
\forall^\infty i\in\omega\bsp\exists j\in[i,i+k)\
(g(j)\bmod b'(j))\ne e_{b',b}(\tau)(g\frestr j).
\]
Since the values $e_{b',b}(\tau)(g\frestr j)$ are smaller than~$b'(j)$, we get
\[
\forall^\infty i\in\omega\bsp\exists j\in[i,i+k)\
g(j)\ne e_{b',b}(\tau)(g\frestr j)
\]
and hence, $g\predby^{\cpr}_{\neq}e_{b',b}(\tau)$.
\end{proof}

\begin{remark}\label{b3}
\startlist
\begin{enumerate}[label=\rm(\arabic*)]
\item If $b'\in\baire[(\omega\smallsetminus2)]$ and $F\subseteq\xprod b'$, then in \autoref{b0}~\ref{b0:3}
moreover, trivially, $\la\prod b',\Sigma_\omega,{\predby^\cpr_{\neq}}\ra\eqT
\la F,\Sigma_\omega,{\predby^\cpr_{\neq}}\ra$.
This holds because there is $\sigma\in\Sigma_\omega$ such that
$f\predby^\cpr_{\neq}\sigma$ for every $f\leq^*b'$:
Consider $\sigma\in\Sigma_\omega$ defined by
$\sigma(s)=b(|s|)$ for $s\in{}^{<\omega}\omega$.
Then $\sigma$~is a~${\predby^\cpr_{\neq}}$-dominating predictor, i.e., for every $f\in\xprod b'$,
$\forall^\infty j\in\omega\bsp f(j)\ne\sigma(f\frestr j)$.
\item If $b\in\baire[(\omega\smallsetminus2)]$, then there is a~${\predby^\cpr_{\leq}}$~dominating predictor. Therefore, the relational system $\la\prod b,\Sigma_b,{\predby^\cpr_{\leq}}\ra$ is not interesting.
\end{enumerate}
\end{remark}

\begin{lemma}\label{b0x}
Let $b\in(\omega+1\smallsetminus2)^\omega$ and\/
$\rel$ satisfy\/ \ref{*0} and\/~\ref{*1} where 
\begin{enumerate}[label=\rm($*_{\arabic*}$)]\setcounter{enumii}{-1}
\item\label{*0}
$\forall n\in\omega\bsp\forall m\in b(n)\bsp{\rel}_n^{-1}[\{m\}]\ne b(n)$
(the dominating number of~$\rel_n$ is~$>1$).
\item\label{*1}
$\forall n\in\omega\bsp\dom({\rel_n})=b(n)$
(the bounding number of~$\rel_n$ is~$>1$).
\end{enumerate}
Then
\begin{enumerate}[label=\rm(\arabic*)]
\item\label{neq-r-eq}
$\Ebf^\cpr_{b,\ne}\leqT\Ebf^\cpr_{b,\rel}\leqT
\Ebf^\cpr_{b}$.
\item\label{neq=eq}
$\Ebf^\cpr_{2,\ne}\eqT
\Ebf^\cpr_{2,\rel}\eqT
\Ebf^\cpr_{2,=}$ provided that $b(n)=2$ for all $n\in\omega$.
\end{enumerate}
\end{lemma}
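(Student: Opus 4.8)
The plan is to prove both parts via explicit Tukey connections, all of the same shape: the domain map $\Psi_-$ is the identity of~$\prod b$, and the codomain map $\Psi_+$ sends a limited predictor $\sigma\in\Sigma_b$ to the predictor $\sigma^h$ defined by $\sigma^h(s)=h_{|s|}(\sigma(s))$, where $h=\seq{h_n}{n<\omega}$ is a well-chosen sequence of maps $h_n\colon b(n)\to b(n)$ (so $\sigma^h\in\Sigma_b$ automatically). The mechanism behind all of them is that such a coordinatewise relabelling of predicted values leaves the constant~$k$ of a constant prediction untouched. Precisely: suppose we want to compare the relational systems of constant prediction attached to two relation sequences over~$b$, temporarily called the \emph{source} sequence $\rel^{\mathrm{s}}=\seq{\rel^{\mathrm{s}}_n}{n<\omega}$ and the \emph{target} sequence $\rel^{\mathrm{t}}=\seq{\rel^{\mathrm{t}}_n}{n<\omega}$, and suppose $h$ is picked so that for every~$n$ and all $x,m\in b(n)$, $x\rel^{\mathrm{t}}_n m$ implies $x\rel^{\mathrm{s}}_n h_n(m)$. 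Then if $f\in\prod b$, $\sigma\in\Sigma_b$ and $f$ is constantly $\rel^{\mathrm{t}}$-predicted by~$\sigma$ with constant~$k$ (i.e. for every~$i$ some $j\in[i,i+k)$ has $f(j)\rel^{\mathrm{t}}_j\sigma(f\frestr j)$), then for such~$j$ also $f(j)\rel^{\mathrm{s}}_j h_j(\sigma(f\frestr j))=\sigma^h(f\frestr j)$, so the same~$k$ witnesses that $\sigma^h$ constantly $\rel^{\mathrm{s}}$-predicts~$f$; hence $(\id,\sigma\mapsto\sigma^h)$ witnesses that the source system is Tukey below the target system.

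For~\ref{neq-r-eq} I apply this twice. For $\Ebf^\cpr_{b,\ne}\leqT\Ebf^\cpr_{b,\rel}$ the source is~$\ne$ and the target is~$\rel$; I take $h_n(m)=\min\bigl(b(n)\setminus\rel_n^{-1}[\{m\}]\bigr)$, which is defined on all of~$b(n)$ exactly by~\ref{*0}, and since $x\rel_n m$ puts $x$ into $\rel_n^{-1}[\{m\}]$ while $h_n(m)$ was chosen outside it, $x\ne h_n(m)$. For $\Ebf^\cpr_{b,\rel}\leqT\Ebf^\cpr_{b}$ the source is~$\rel$ and the target is~$=$; I take $h_n(m)=\min\set{y\in b(n)}{m\rel_n y}$, which is defined on all of~$b(n)$ exactly by~\ref{*1} (so that $m\in b(n)=\dom(\rel_n)$), and since $x=m$ we get $x=m\rel_n h_n(m)$.

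For~\ref{neq=eq}, by part~\ref{neq-r-eq} applied to the given~$\rel$ we already have $\Ebf^\cpr_{2,\ne}\leqT\Ebf^\cpr_{2,\rel}\leqT\Ebf^\cpr_{2,=}$, so it remains only to supply the link $\Ebf^\cpr_{2,=}\leqT\Ebf^\cpr_{2,\ne}$: chaining it onto the previous two closes a cycle, whence all three systems fall into a single $\eqT$-class. That link is again an instance of the mechanism, this time with $b(n)=2$, source~$=$, target~$\ne$, and $h_n(m)=1-m$: on $\{0,1\}$, $x\ne m$ forces $x=1-m=h_n(m)$. (It is worth recording, though the proof does not use it, that over a two-element set~\ref{*0} and~\ref{*1} together force each $\rel_n$ to be exactly one of $=$ and $\ne$, by a one-line check of which subsets of $2\times2$ have full domain and no element with full preimage.)

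I do not anticipate a genuine obstacle; the argument is mechanical. The only things requiring care are keeping the directions of $\rel_n$ and $\rel_n^{-1}$ straight, and checking in each of the three applications that the chosen $h_n$ is everywhere defined on~$b(n)$ --- which is exactly what~\ref{*0}, \ref{*1}, and the hypothesis $b(n)=2$, respectively, guarantee.
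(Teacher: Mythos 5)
Your proposal is correct and follows essentially the same route as the paper: identity on $\prod b$ together with a coordinatewise relabelling of predicted values (your $h_n$ are exactly the paper's $\varphi^0_n,\varphi^1_n$ chosen via \ref{*0} and \ref{*1}), with the same constant $k$ carried over, and the same map $\sigma(s)\mapsto 1-\sigma(s)$ closing the cycle for part \ref{neq=eq}.
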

\begin{proof}
\ref{neq-r-eq}:
By \ref{*0} and \ref{*1}, respectively, there are functions
$\varphi^0_n,\varphi^1_n:b(n)\to b(n)$ such that
$\neg(\varphi^{0}_n(m)\rel_nm)$ and $m\rel_n\varphi^1_n(m)$ whenever $m\in b(n)$.
Define $\Psi^0_+,\Psi^1_+:\Sigma_b\to\Sigma_b$ by
$\Psi^i_+(\sigma)(s)=\varphi^i_n(\sigma(s))$ whenever $s\in\Seq_n(b)$, $i=0$,~$1$.
Then
\[(\id_{\prod b},\Psi^0_+)\colon
\Ebf^\cpr_{b,\ne}\to
\Ebf^\cpr_{b,\rel}
\setand
(\id_{\prod b},\Psi^1_+)\colon
\Ebf^\cpr_{b,\rel}\to
\Ebf^\cpr_{b}\]
are Tukey connections because for every $f\in\prod b$ and $\sigma\in\Sigma_b$,
$f\predby^{\cpr}_{\rel}\sigma\Rightarrow f\predby^{\cpr}_{\neq}\Psi^0_+(\sigma)$
and
$f\predby^{\cpr}_{=}\sigma\Rightarrow f\predby^{\cpr}_{\rel}\Psi^1_+(\sigma)$
(because $k\rel_nm\Rightarrow k\neq\varphi^0_n(m)$ and $k=n\Rightarrow k\rel_n\varphi^1_n(m)$).

\ref{neq=eq}:
By \ref{neq-r-eq} the Tukey connection
$(\id_{2^\omega},\Psi_+)\colon\Ebf^\cpr_{2}\to
\Ebf^\cpr_{2,\ne}$ suffices where
$\Psi_+(\sigma)(s)=1-\sigma(s)$ for $\sigma\in\Sigma_2$ and $s\in2^{<\omega}$.
\end{proof}

\begin{definition}
Let $\efrak^\const_{\leq_b}=\bfrak(\la\baire,\Sigma_\omega,{\predby^\cpr_{\leq_b}}\ra)$ and
$\vfrak^\const_{\leq_b}=\dfrak(\la\baire,\Sigma_\omega,{\predby^\cpr_{\leq_b}}\ra)$ for $b\in\baire[(\omega+1\smallsetminus1)]$.
\end{definition}

By virtue of~\autoref{b0}~\ref{b0:2}--\ref{b0:3}, for every $b,b'\in\baire[(\omega\smallsetminus2)]$, $b'\leq^*b$
implies
$\efrak^\const_{b}\leq\efrak^\const_{b'}$,
$\vfrak^\const_{b'}\leq\vfrak^\const_{b'}$,
$\efrak^\const_{b',\ne}\leq\efrak^\const_{b',\ne}$, and
$\vfrak^\const_{b,\ne}\leq\efrak^\const_{b',\ne}$.
Therefore we define:

\begin{definition}
\begin{align*}
& &\efrak^\const_\ubd&=\min\set{\efrak^\const_b}{2\leq b\in\baire}, \quad
&& &\vfrak^\const_\ubd&=\sup\set{\vfrak^\const_b}{2\leq b\in\baire},\\
& &\efrak^\const_{\ubd,\ne}&=\sup\set{\efrak^\const_{b,\ne}}{2\leq b\in\baire}, \quad
&& &\vfrak^\const_{\ubd,\ne}&=\min\set{\vfrak^\const_{b,\ne}}{2\leq b\in\baire},\\
& &\efrak^\const_\hyp&=\min\set{\efrak^\const_K}{2\leq K<\omega}, \quad
&&&\vfrak^\const_\hyp&=\sup\set{\vfrak^\const_K}{2\leq K<\omega},\\
& &\efrak^\const_{\hyp,\ne}&=\sup\set{\efrak^\const_{K,\ne}}{2\leq K<\omega}, \quad
&& &\vfrak^\const_{\hyp,\ne}&=\min\set{\vfrak^\const_{K,\ne}}{2\leq K<\omega}.
\end{align*}
\end{definition}

\section{Constant evasion ideals and duality}\label{sec:idealsconst}

In this section, we will associate ideals on the reals regarding the concepts of evading and predicting as follows.

Let $b$ and ${\rel}$ be as in~\autoref{def:pred}.
Denote
\begin{align*}
A_{b,\rel}^{\sigma,k}&=\set{x\in\prod b}{\forall i\in\omega\ \exists j\in[i,i+k)\
x(j)\rel_j\sigma(x\frestr j)},
\quad\sigma\in\Sigma_b,\ k\in\omega,\\
\Iwf^{\const,0}_{b,\rel}&=\set{X\subseteq\prod b}{
\exists\sigma\in\Sigma_b\ \exists k\in\omega\
X\subseteq A_{b,\rel}^{\sigma,k}},\\
\Iwf^\const_{b,\rel}&=\set{X\subseteq\prod b}
{\exists\sigma\in\Sigma_b\
X\subseteq\bigcup_{k\in\omega}A_{b,\rel}^{\sigma,k}}.
\end{align*}
Like in~\autoref{def:pred}, if $b(n)=K$ for $n<\omega$, then we replace the index~$b$
with~$K$; we omit the index~$K$, if $K=\omega$; and we omit the index~$\rel$,
if ${\rel}$ is~$=$.

In \cite{kamoeva}, the $\sigma$-ideal generated by $\Iwf^\const_{K}$ is studied. 

\begin{lemma}\label{b1}
Let $b\in\baire[(\omega+1\smallsetminus2)]$ and\/ $2\leq K<\omega$.
\begin{enumerate}[label=\rm(\alph*)]
\item\label{b1:a}
$\Iwf^{\const,0}_{b,\rel}\subseteq\Iwf^\const_{b,\rel}$
are ideals on\/~$\prod b$.
\item\label{b1:b}
$\Iwf^{\const,0}_b=\set{X\cap\prod b}{X\in\Iwf^{\const,0}}$ and
$\Iwf^\const_b=\set{X\cap\prod b}{X\in\Iwf^\const}$.
\item\label{b1:c}
$\Iwf^\const_{b,\rel}\subseteq\Mwf(\prod b)$ and
$\Iwf^\const_{K,\rel}\subseteq\Ewf(\baire[K])$.
\item\label{b1:d}
$\Iwf^\const_b\subseteq\Iwf^\const_{b,\ne}$ and
$\Iwf^\const\subseteq\Iwf^\const_\leq\subseteq\Iwf^\const_{\ne}$.
\item\label{b1:e}
$\non(\Iwf^\const_{b,\rel})=\efrak^\const_{b,\rel}$ and\/
$\cov(\Iwf^\const_{b,\rel})=\vfrak^\const_{b,\rel}$.
\end{enumerate}
\end{lemma}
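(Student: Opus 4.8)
The statement packages five routine but multi-step facts about the ideals $\Iwf^{\const,0}_{b,\rel}\subseteq\Iwf^\const_{b,\rel}$. I would prove them in the listed order, since the later parts reuse the bookkeeping from the earlier ones. For \ref{b1:a}, I would check that each $A_{b,\rel}^{\sigma,k}$ is closed downward (trivial) and that finite unions of sets of the form $A_{b,\rel}^{\sigma,k}$ are covered by a single such set: given $\sigma_0,\sigma_1$ and $k_0,k_1$, the predictor $\sigma$ obtained by interleaving — e.g. $\sigma(s)=\sigma_0(s)$ on even-length $s$ and $\sigma_1(s)$ on odd-length $s$, together with $k=2\max(k_0,k_1)$ — absorbs $A^{\sigma_0,k_0}\cup A^{\sigma_1,k_1}$, because in any window of length $k$ there is a sub-window of length $k_i$ of the appropriate parity. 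This gives closure under finite unions for $\Iwf^{\const,0}_{b,\rel}$; for $\Iwf^\const_{b,\rel}$ one argues similarly, noting $\bigcup_k A^{\sigma_0,k}\cup\bigcup_k A^{\sigma_1,k}\subseteq\bigcup_k A^{\sigma,k}$ with the same interleaved $\sigma$. That singletons belong to the ideals follows from \ref{*0}-type assumptions on $\rel$ (one can choose $\sigma$ pointwise so that $x(j)\rel_j\sigma(x\frestr j)$ always holds). For \ref{b1:b}, I would use the maps $\pi_b$, $r_b$, $e_b$ defined before \autoref{b0}: if $X\in\Iwf^{\const,0}$ is witnessed by $\tau\in\Sigma_\omega$ and $k$, then $X\cap\prod b$ is witnessed by $r_b(\tau)$ (using $\pi_b(f)=f$ for $f\in\prod b$ and that $\equiv_b$ restricted to $\prod b$ is $=$), and conversely $e_b$ lifts a witness on $\prod b$ to one on $\baire$; the bounds $k$ are unchanged, which also handles the $\Iwf^\const$ (countable-union-of-levels) version.

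For \ref{b1:c}, the inclusion $\Iwf^\const_{b,\rel}\subseteq\Mwf(\prod b)$ is immediate once one observes that each $A_{b,\rel}^{\sigma,k}$ is closed nowhere dense in $\prod b$: it is closed because membership is a closed condition (for each $i$ the set $\{x:\exists j\in[i,i+k)\ x(j)\rel_j\sigma(x\frestr j)\}$ is clopen, and $A^{\sigma,k}$ is the intersection over $i$), and it is nowhere dense because given any finite condition $s$ of length $\ell$ we can extend $s$ to length $\ell+k$ by choosing, for each successive coordinate, a value avoiding the (at most $b(j)-1$-sized, nonempty-complement) relation $\rel_j$ against the dictated value $\sigma(\text{current extension})$ — here \ref{*0}, i.e. ${\rel}_j^{-1}[\{m\}]\ne b(j)$, guarantees such a value exists — so no extension of $s$ of that length, hence no basic open set refining it, lies in $A^{\sigma,k}$. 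Thus $\bigcup_k A^{\sigma,k}$ is meager. For the sharper $\Iwf^\const_{K,\rel}\subseteq\Ewf(\baire[K])$, I would additionally estimate the measure (w.r.t.\ the uniform product measure on $K^\omega$): the probability that a random $x$ satisfies $\exists j\in[i,i+k)\ x(j)\rel_j\sigma(x\frestr j)$ is at most $1-(1/K)^k$ uniformly in $i$, and by independence across disjoint windows $[ik,(i+1)k)$ one gets $\mu(A^{\sigma,k})\le\prod_{m}(1-K^{-k})=0$; since $A^{\sigma,k}$ is closed (hence $F_\sigma$), it lies in $\Ewf$, and so does the countable union.

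For \ref{b1:d}, the inclusions follow purely from the relational containments $=\ \subseteq\ \ne^c$ is the wrong direction — rather one uses that if $x(j)=\sigma(x\frestr j)$ then... no; the correct observation is that $A^{\sigma,k}_{b,=}\subseteq A^{\sigma',k}_{b,\ne}$ where $\sigma'$ is obtained from $\sigma$ by shifting the predicted value to a different residue, exactly as in the proof of \autoref{b0x}\ref{neq-r-eq} (apply the functions $\varphi^i_n$ there); similarly $=\ \Rightarrow\ \leq_b$ and $\leq\ \Rightarrow\ \ne$ after the analogous value-shift, giving $\Iwf^\const\subseteq\Iwf^\const_\leq\subseteq\Iwf^\const_\ne$. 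Finally \ref{b1:e} is the duality statement: $\non(\Iwf^\const_{b,\rel})=\efrak^\const_{b,\rel}$ holds because a set $F\subseteq\prod b$ fails to be in $\Iwf^\const_{b,\rel}$ exactly when no single $\sigma\in\Sigma_b$ constantly $\rel$-predicts all of $F$ — unwinding $F\subseteq\bigcup_k A^{\sigma,k}_{b,\rel}$ means precisely $\forall f\in F\ \exists k\ \forall i\ \exists j\in[i,i+k)\ f(j)\rel_j\sigma(f\frestr j)$, i.e.\ $f\predby^\cpr_\rel\sigma$ — so $\non$ and $\efrak$ minimize over the same family; dually, $\cov(\Iwf^\const_{b,\rel})=\vfrak^\const_{b,\rel}$ because covering $\prod b$ by sets $\bigcup_k A^{\sigma,k}_{b,\rel}$ is the same as choosing $S\subseteq\Sigma_b$ with every $f$ constantly $\rel$-predicted by some $\sigma\in S$.

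\textbf{Main obstacle.} None of the steps is deep; the part requiring genuine care is \ref{b1:c}, specifically the nowhere-density / null estimate, where one must correctly invoke hypothesis \ref{*0} (nonempty complement of each $\rel_j$-fiber) to extend conditions out of $A^{\sigma,k}$ and, in the $K^\omega$ case, organize the independence argument over the right family of pairwise-disjoint length-$k$ windows. The rest is bookkeeping with the maps $\pi_b,r_b,e_b$ and the value-shift functions $\varphi^i_n$ already introduced.
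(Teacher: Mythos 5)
Your overall route is the same as the paper's: closure under finite unions via an interleaved predictor for \ref{b1:a}, the maps $r_b,e_b$ with the bound $k$ unchanged for \ref{b1:b}, closed nowhere dense (resp.\ closed null) building blocks $A^{\sigma,k}_{b,\rel}$ for \ref{b1:c}, the shift $\sigma\mapsto\sigma\oplus1$ for \ref{b1:d}, and unwinding definitions for \ref{b1:e}. However, the one step in \ref{b1:a} that carries real content is wrong as you state it. Interleaving $\sigma_0$ and $\sigma_1$ by the parity of $|s|$ does \emph{not} absorb $A^{\sigma_0,k_0}_{b,\rel}\cup A^{\sigma_1,k_1}_{b,\rel}$: the hypothesis $x\in A^{\sigma_0,k_0}_{b,\rel}$ gives, in every window of length $k_0$, some $j$ with $x(j)\rel_j\sigma_0(x\frestr j)$, but nothing forces any of these witnesses to sit at a position of the parity on which your $\sigma$ copies $\sigma_0$. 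Concretely, in $2^\omega$ with $\rel$ equal to $=$, take $x\equiv0$, let $\sigma_0(s)=0$ for $|s|$ odd and $\sigma_0(s)=1$ for $|s|$ even, and $\sigma_1\equiv1$; then $x\in A^{\sigma_0,2}_{2,=}$, yet the parity-interleaved $\sigma$ never predicts $x$ at any position, so $x\notin\bigcup_kA^{\sigma,k}_{2,=}$. The repair is to interleave in \emph{blocks}, exactly as the paper does: with $k=\max\{k_0,k_1\}$ set $\sigma(s)=\sigma_0(s)$ when $|s|\in\bigcup_{n}[2nk,2nk+k)$ and $\sigma(s)=\sigma_1(s)$ otherwise; every window of length $3k$ contains a complete aligned block of each type, of length $k\geq k_i$, and the hypothesis applied to that block yields a witness on which $\sigma$ agrees with the relevant $\sigma_i$, whence $A^{\sigma_0,k_0}_{b,\rel}\cup A^{\sigma_1,k_1}_{b,\rel}\subseteq A^{\sigma,3k}_{b,\rel}$.

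Two smaller points. For singletons to lie in the ideals you need every value to be $\rel_j$-related to something, i.e.\ $\dom(\rel_j)=b(j)$, which is condition \ref{*1}, not \ref{*0} (\autoref{def:pred} only assumes \ref{*0}; the paper's proof simply does not address singletons, so this is at worst a mislabeling on your side). In the null estimate of \ref{b1:c}, the events \lqq the window $[mk,(m+1)k)$ contains a correct prediction\rqq\ are not independent, since $\sigma(x\frestr j)$ depends on the earlier coordinates; what you actually have is the uniform conditional bound $1-K^{-k}$ given any value of $x\frestr mk$ (using $\rel_j^{-1}[\{m\}]\ne K$), and the product estimate then follows by the chain rule. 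With these repairs your argument for \ref{b1:c} is fine (the paper cites Kamo rather than spelling it out), and parts \ref{b1:b}, \ref{b1:d} and \ref{b1:e} match the paper's proof.
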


\begin{proof}
\ref{b1:a}:
$\Iwf^{\const,0}_{b,\rel}$, $\Iwf^\const_{b,\rel}$ are ideals because
$A_{b,\rel}^{\sigma_0,k_0}\cup A_{b,\rel}^{\sigma_1,k_1}\subseteq
A_{b,\rel}^{\sigma,3k}$ where
$k=\max\{k_0,k_1\}$ and
$\sigma(s)=\sigma_0(s)$, if $|s|\in\bigcup_{n\in\omega}[2nk,2nk+k)$, and
$\sigma(s)=\sigma_1(s)$, otherwise.

\ref{b1:b}:
Similarly like in \autoref{b0}~\ref{b0:1},
for $\sigma\in\Sigma_b$ and $\tau\in\Sigma_\omega$,
$A_{b,=}^{\sigma,k}\subseteq A_{\omega,=}^{e_b(\sigma),k}\cap\prod b$ and
$A_{\omega,=}^{\tau,k}\cap\prod b\subseteq A_{b,=}^{r_b(\tau),k}$.

\ref{b1:c}:
The sets $A_{b,\rel}^{\sigma,k}$ are closed nowhere dense sets in~$\prod b$
and the sets $A_{K,\rel}^{\sigma,k}$ are closed null subsets of~$\baire[K]$
for $2\leq K<\omega$ because ${\rel}_n^{-1}[\{m\}]\ne b(n)$
(see~\cite{kamoeva} in case of $\Iwf^\const_K$).
Therefore $\Iwf^\const_{b,\rel}\subseteq\Mwf(\prod b)$ and
$\Iwf^\const_{K,\rel}\subseteq\Ewf(\baire[K])$.

\ref{b1:d}:
For $\sigma\in\Sigma_b$ (possibly $b=\omega$) let $\sigma\oplus 1\in\Sigma_b$
be defined by $(\sigma\oplus 1)(s)=(\sigma(s)+\nobreak1 \bmod b(|s|))$.
The inclusions
$\Iwf^\const_b\subseteq\Iwf^\const_{b,\ne}$ and
$\Iwf^\const\subseteq\Iwf^\const_\leq\subseteq\Iwf^\const_{\ne}$ hold
because
$A_{b,=}^{\sigma,k}\subseteq A_{b,\ne}^{\sigma\oplus1,k}$ and
$A_{\omega,=}^{\sigma,k}\subseteq A_{\omega,\leq}^{\sigma,k}\subseteq
A_{\omega,\ne}^{\sigma\oplus1,k}$.

\ref{b1:e}:
By definitions.
\end{proof}

\begin{lemma}\label{b2}
Let $b\in\baire[(\omega\smallsetminus2)]$ be such that $b'\leq^*b$ and
let\/ $2\leq K<\omega$.
\begin{enumerate}[label=\rm(\arabic*)]
\item\label{b2:a}
\begin{enumerate}[label=\rm(\alph*)]
\item\label{b2:a:1}
$\min\{\efrak^\const_\ubd,\bfrak\}\leq\efrak^\const\leq\efrak^\const_\ubd\leq
\efrak^\const_b\leq\efrak^\const_{b'}\leq\efrak^\const_2$.
\item\label{b2:a:2}
$\efrak^\const_2=\efrak^\const_{2,\ne}\leq\efrak^\const_{K,\ne}\leq\non(\Ewf)$.
\item\label{b2:a:3}
$\efrak^\const_{b',\ne}\leq\efrak^\const_{b,\ne}\leq\efrak^\const_{\ubd,\ne}\leq\efrak^\const_{\ne}\leq\non(\Mwf)$.
\item\label{b2:a:4}
$\efrak^\const\leq\efrak^\const_\leq\leq\efrak^\const_{\ne}$.
\item\label{b2:a:5}
$\max\{\efrak^\const,\bfrak\}\leq\efrak^\const_\leq\leq\min\{\dfrak,\non(\Mwf)\}$.
\end{enumerate}
\item\label{b2:b}
\begin{enumerate}[label=\rm(\alph*)]
\item\label{b2:b:1}
$\vfrak^\const_2\leq\vfrak^\const_{b'}\leq\vfrak^\const_b\leq
\vfrak^\const_\ubd\leq\vfrak^\const\leq\max\{\vfrak^\const_\ubd,\dfrak\}$.
\item\label{b2:b:2}
$\cov(\Ewf)\leq\vfrak^\const_{K,\ne}\leq\vfrak^\const_{2,\ne}=\vfrak^\const_2$.
\item\label{b2:b:3}
$\cov(\Mwf)\leq\vfrak^\const_{\ne}\leq\vfrak^\const_{\ubd,\ne}\leq\vfrak^\const_{b,\ne}\leq\vfrak^\const_{b',\ne}$.
\item\label{b2:b:4}
$\vfrak^\const_{\ne}\leq\vfrak^\const_\leq\leq\vfrak^\const$.
\item\label{b2:b:5}
$\max\{\cov(\Mwf),\bfrak\}\leq\vfrak^\const_\leq\leq\min\{\vfrak^\const,\dfrak\}$.
\end{enumerate}
\end{enumerate}
\end{lemma}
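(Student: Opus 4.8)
The statement collects many inequalities of the same two flavors: (i) internal monotonicity among the constant evasion/prediction numbers indexed by $b$, $K$, and the various modes $=$, $\ne$, $\le$, $\leq_b$; and (ii) comparisons with the classical cardinals $\bfrak$, $\dfrak$, $\non(\Mwf)$, $\cov(\Mwf)$, $\non(\Ewf)$, $\cov(\Ewf)$. The overall strategy is to produce each inequality as $\bfrak$ or $\dfrak$ of a Tukey connection, and then invoke the duality $\bfrak(\Rbf^\perp)=\dfrak(\Rbf)$ so that every claim in part~\ref{b2:b} is simply the dual of the corresponding claim in part~\ref{b2:a}. Thus I would first prove \ref{b2:a} in full, commenting at the end that \ref{b2:b} follows by dualizing; this halves the work.

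\textbf{Organizing the internal inequalities.} The chains
$\efrak^\const_\ubd\le\efrak^\const_b\le\efrak^\const_{b'}\le\efrak^\const_2$ (for $b'\le^*b$),
$\efrak^\const\le\efrak^\const_\ubd$, $\efrak^\const_2=\efrak^\const_{2,\ne}$, $\efrak^\const_{b,\ne}\le\efrak^\const_{b',\ne}$, $\efrak^\const_{b,\ne}\le\efrak^\const_{\ubd,\ne}\le\efrak^\const_{\ne}$, and $\efrak^\const\le\efrak^\const_\le\le\efrak^\const_{\ne}$ are all immediate consequences of earlier machinery: the Tukey order between $\Ebf^\cpr_{b,\rel}$-systems in \autoref{b0}\ref{b0:2}--\ref{b0:3} (recalling $\Rbf\leqT\Rbf'$ gives $\bfrak(\Rbf')\le\bfrak(\Rbf)$ and $\dfrak(\Rbf)\le\dfrak(\Rbf')$), together with \autoref{b0x}\ref{neq-r-eq} for $\Ebf^\cpr_{b,\ne}\leqT\Ebf^\cpr_{b,\rel}\leqT\Ebf^\cpr_b$, \autoref{b0x}\ref{neq=eq} for the $K=2$ collapse $\efrak^\const_2=\efrak^\const_{2,\ne}$, and the definitions of $\efrak^\const_\ubd$, $\efrak^\const_{\ubd,\ne}$ as inf/sup over $b$. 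The chain $\efrak^\const_2\le\efrak^\const_{K,\ne}$ in \ref{b2:a:2} and $\efrak^\const_{K,\ne}\le\efrak^\const_{\ne}$ are again instances of \autoref{b0}\ref{b0:3}/\autoref{b0x} since $2$ is the largest (coarsest) base.

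\textbf{The comparisons with classical invariants.} These are the real content. For $\efrak^\const_{\ne}\le\non(\Mwf)$ and $\efrak^\const_{K,\ne}\le\non(\Ewf)$, use \autoref{b1}\ref{b1:c}–\ref{b1:e}: since $\Iwf^\const_{b,\rel}\subseteq\Mwf(\prod b)$, a non-meager set is not in $\Iwf^\const_{b,\rel}$, hence cannot be $\Ebf^\cpr_{b,\rel}$-dominated, giving $\efrak^\const_{b,\rel}=\non(\Iwf^\const_{b,\rel})\le\non(\Mwf)$; likewise with $\Ewf(\baire[K])$ in the $K<\omega$ case. For $\min\{\efrak^\const_\ubd,\bfrak\}\le\efrak^\const$ and, crucially, $\bfrak\le\efrak^\const_\le$ and $\efrak^\const_\le\le\dfrak$ in \ref{b2:a:5}: given a family $F$ of size $<\bfrak$ in $\baire$, find $g\in\baire$ with $f\le^* g$ for all $f\in F$, then $g$ (viewed with the $\le_b$-relation, or after restricting to $\prod b$ for $b$ chosen to dominate $F$) is constantly predicted, since one can build a predictor whose value at $s$ is (an upper bound related to) $g(|s|)$ — so $F$ is $\predby^\cpr_{\le_b}$-bounded; this yields $\bfrak\le\efrak^\const_\le$ and feeds into $\min\{\efrak^\const_\ubd,\bfrak\}\le\efrak^\const$ by splitting $F$ into the part living in some $\prod b$ and the rest. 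For $\efrak^\const_\le\le\dfrak$: a dominating family $D\subseteq\baire$ gives, for each $d\in D$, the constant-value-$d$-style predictor $\sigma_d(s)=d(|s|)$, and a $\predby^\cpr_{\le}$-evading family must in particular avoid being dominated in the $\le_b$ sense by every such predictor, forcing size $\le|D|$; more precisely one checks $\{\sigma_d:d\in D\}$ witnesses that any $\predby^\cpr_\le$-unbounded $F$ has size $\ge$ something, i.e. dually $\vfrak^\const_\le\le\dfrak$ and then transpose. The single step I expect to be fussiest is pinning down the exact form of the "value $\approx g(|s|)$" predictor so that $f\le^* g$ genuinely implies $f\predby^\cpr_{\le_b}\sigma$ with the constant $k$ chosen correctly (one needs $j\in[i,i+k)$ with $f(j)\le\sigma(f\restriction j)$ and $f(j)\equiv\sigma(f\restriction j)\bmod b(j)$ eventually — so really one wants $\sigma(s)$ to be the least element $\ge g(|s|)$ congruent appropriately, or simply $b(|s|)-1$ when $b$ bounds $F$), and to verify the $\min/\max$ bookkeeping in \ref{b2:a:1} and \ref{b2:a:5}. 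The remaining items \ref{b2:a:3}, \ref{b2:a:4} are then pure transitivity from what has been assembled, and \ref{b2:b} is obtained verbatim by replacing each $\Ebf^\cpr_{b,\rel}\leqT\Ebf^\cpr_{b',\rel'}$ used above with its dual and each $\non/\cov$ pairing in \autoref{b1}\ref{b1:c}--\ref{b1:e} with the covering side, so I would state it as "Dualizing \ref{b2:a} via $\bfrak(\Rbf^\perp)=\dfrak(\Rbf)$ and \autoref{b1}\ref{b1:e} gives \ref{b2:b}." \qedhere
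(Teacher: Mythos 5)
Most of your proposal matches the paper's strategy: the paper likewise dismisses the monotonicity chains and the comparisons with $\non(\Ewf)$, $\non(\Mwf)$, $\cov(\Ewf)$, $\cov(\Mwf)$ as immediate from \autoref{b0}, \autoref{b0x} and \autoref{b1}, and your easy arguments for $\bfrak\leq\efrak^\const_\leq$ and $\vfrak^\const_\leq\leq\dfrak$ (a dominating real, resp.\ a dominating family, turned into predictors $\sigma(s)=g(|s|)$) are exactly the intended ones. The problems lie in the four inequalities that the paper singles out as the actual content, and there your proposal has genuine gaps.

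First, your treatment of $\efrak^\const_\leq\leq\dfrak$ in \ref{b2:a:5} is invalid: the family $\{\sigma_d\}_{d\in D}$ with $\sigma_d(s)=d(|s|)$ proves $\vfrak^\const_\leq\leq\dfrak$, and ``transposing''/dualizing that statement yields $\bfrak\leq\efrak^\const_\leq$, not $\efrak^\const_\leq\leq\dfrak$; these are inequalities between cardinals of two \emph{different} relational systems, so they do not dualize for free unless they come from an explicit Tukey connection, which you have not produced. The paper proves $\efrak^\const_\leq\leq\dfrak$ by a diagonal construction you are missing: to each predictor $\sigma$ assign the real $f_\sigma$ defined recursively by $f_\sigma(j)=\sigma(f_\sigma\frestr j)+1$; if $F\subseteq\Sigma_\omega$ has $|F|<\efrak^\const_\leq$, some $\tau$ constantly $\leq$-predicts every $f_\sigma$, whence $\sigma(s)<\tau(s)$ for infinitely many $s$ for each $\sigma\in F$, so $F$ is not dominating. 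Second, and for the same reason, your blanket claim that part \ref{b2:b} follows ``verbatim by dualizing'' part \ref{b2:a} fails for the two non-Tukey-derived items there: $\vfrak^\const\leq\max\{\vfrak^\const_\ubd,\dfrak\}$ in \ref{b2:b:1} needs its own (easy) argument, namely taking a dominating family $D$ and the union of $\predby^\cpr_{=}$-dominating sets $V_b$ for $\prod b$, $b\in D$; and $\bfrak\leq\vfrak^\const_\leq$ in \ref{b2:b:5} needs the companion diagonal argument (take $\tau$ eventually dominating the given $<\bfrak$ many predictors and let $f_\tau(j)=\tau(f_\tau\frestr j)$; then $f_\tau$ is constantly $\leq$-predicted by none of them). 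Finally, a smaller point: your sketch of $\min\{\efrak^\const_\ubd,\bfrak\}\leq\efrak^\const$ via ``splitting $F$ into the part living in some $\prod b$ and the rest'' is not the right move; no splitting occurs. One uses $|F|<\bfrak$ to find a \emph{single} $b$ with $F\subseteq\xprod b$, and then $|F|<\efrak^\const_\ubd\leq\efrak^\const_b=\efrak^\const_{*b}$ (this is where \autoref{b0}~\ref{b0:2} is needed, since members of $F$ are only eventually below $b$) to get one $\sigma\in\Sigma_\omega$ constantly predicting all of $F$.
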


\begin{proof}
All inequalities except the following are obvious consequences of
\autoref{b0} and \autoref{b1} (or easy to see).

$\min\{\efrak^\const_\ubd,\bfrak\}\leq\efrak^\const$:
Let $\kappa<\min\{\efrak^\const_\ubd,\bfrak\}$.
Since $\kappa<\bfrak$ for every set $F\in[\baire]^\kappa$ there is
$b\in\baire$ such that $F\subseteq\xprod b$ and since
$\kappa<\efrak^\const_\ubd\leq\efrak^\const_b=\efrak^\const_{*b}$ there is
$\sigma\in\Sigma_\omega$ such that $f\predby^\cpr_{=}\sigma$ for all $f\in F$.
This proves that $\kappa<\efrak^\const$.

$\vfrak^\const\leq\max\{\vfrak^\const_\ubd,\dfrak\}$:
Let $D\subseteq\baire$ be a~dominating family of functions of
cardinality~$\dfrak$ and for every $b\in D$ let $V_b\subseteq\Sigma_\omega$
be a~$\predby^\cpr_{=}$-dominating set for $\prod b$ of cardinality $\vfrak^\const_b$.
Then $\bigcup_{b\in D}V_b$ is a~$\predby^\cpr_{=}$-dominating set for~$\baire$
of cardinality~$\leq\max\{\vfrak^\const_\ubd,\dfrak\}$.

$\efrak^\const_\leq\leq\dfrak$:
For $\sigma\in\Sigma_\omega$ define
$f_\sigma(j)=\sigma(f_\sigma\frestr j)+1$.
Let $F\subseteq\Sigma_\omega$ and $|F|<\efrak^\const_\leq$.
Then there is $\tau\in\Sigma_\omega$ such that for every $\sigma\in F$,
$f_\sigma(j)\leq\tau(f_\sigma\frestr j)$ for infinitely
many~$j$ and hence
$\sigma(s)<\tau(s)$ for infinitely many $s\in{}^{<\omega}\omega$.
Therefore $F$~is not a~dominating family.

$\bfrak\leq\vfrak^\const_\leq$:
Let $F\subseteq\Sigma_\omega$ and $|F|<\bfrak$.
There is $\tau\in\Sigma_\omega$ such that $\forall\sigma\in F$
$\forall^\infty s\bsp\sigma(s)<\tau(s)$.
Define $f_\tau(j)=\sigma(f_\tau\frestr j)$.
For every $\sigma\in F\bsp\forall^\infty j\in\omega$
$\sigma(f_\tau\frestr j)<\tau(f_\tau\frestr j)=f_\tau(j)$, i.e.,
$f_\tau$ is not constantly $\leq$-predicted by any $\sigma\in F$.
\end{proof}

In addition to the above lemma, the following results are known. 

\begin{lemma}
\startlist
\begin{enumerate}[label=\rm(\arabic*)]
\item \emph{\cite{kamoeva}} $\efrak^\const\leq\cov(\Mwf)$ and\/ $\non(\Mwf)\leq\vfrak^\const$.

\item \emph{\cite{BreGarV}} $\vfrak_2^\const\geq\non(\Mwf)$. In particular, $\vfrak_2^\const\geq\efrak_2^\const$.

\end{enumerate}
\end{lemma}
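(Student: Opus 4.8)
These are quoted from \cite{kamoeva} and \cite{BreGarV}; I sketch the routes I would take. The new content of~(1) beyond \autoref{b1} is precisely the two displayed inequalities: the complementary bounds $\cov(\Mwf)\leq\vfrak^\const$ and $\efrak^\const\leq\non(\Mwf)$ are already immediate from $\Iwf^\const\subseteq\Mwf$ (\autoref{b1}~\ref{b1:c}) via \autoref{b1}~\ref{b1:e}.

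For~(1) the plan is to deduce both inequalities from one Tukey connection,
\[\Ed^\perp\leqT\Ebf^\cpr,\qquad\text{where }\Ed=\la\baire,\baire,{\neq^*}\ra\text{ and }f\neq^* g\text{ iff }\forall^\infty n\ f(n)\neq g(n).\]
Bartoszy\'nski's characterizations give $\bfrak(\Ed)=\non(\Mwf)$ and $\dfrak(\Ed)=\cov(\Mwf)$, so such a connection yields at one stroke $\efrak^\const=\bfrak(\Ebf^\cpr)\leq\bfrak(\Ed^\perp)=\cov(\Mwf)$ and $\non(\Mwf)=\dfrak(\Ed^\perp)\leq\dfrak(\Ebf^\cpr)=\vfrak^\const$. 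To build the connection I would fix consecutive blocks $\omega=\bigcup_n I_n$ with $|I_n|\to\infty$ and a recursive coding of each $\omega^{I_n}$ by $\omega$; the map from $\baire$ to $\baire$ sends $g$ to the real $x_g$ spelling out $g(n)$ on $I_n$, and the map from $\Sigma_\omega$ to $\baire$ sends $\sigma$ to the real recording, block by block, the guesses $\sigma$ makes while reading $x_g$. The verification would be that if this record is eventually different from $g$ then $\sigma$ errs on a whole window inside cofinally many blocks, i.e.\ $x_g$ strongly evades~$\sigma$. The hard part, and the reason this is \cite{kamoeva} rather than a line, is that $\sigma$'s guess at $x_g\frestr j$ depends on the \emph{whole} initial segment and hence on~$g$, so the ``record of $\sigma$'' is not literally a function of $\sigma$ alone; the way around it is to first pass to the Tukey-equivalent relational system of block-predictors (a variant of \autoref{b0}) and to choose the $I_n$ so that the finitely many branch-values such a predictor actually consults are pinned down.

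For~(2) I would cite $\vfrak_2^\const\geq\non(\Mwf)$ directly from \cite{BreGarV}: in the $2$-valued case the block analysis above collapses to a finite combinatorial statement on the sets $2^{I_n}$, to which the $\exists^\infty$-equality characterization of $\non(\Mwf)$ over a fixed interval partition applies. The remaining assertion needs no new idea: by \autoref{b2}~\ref{b2:a:2}, $\efrak_2^\const=\efrak_{2,\ne}^\const\leq\non(\Ewf)$, and $\non(\Ewf)\leq\non(\Mwf)$ since $\Ewf\subseteq\Mwf$, so combining with the first part of~(2) gives $\efrak_2^\const\leq\non(\Mwf)\leq\vfrak_2^\const$. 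I expect the only genuine obstacle in the whole statement to be that block-predictor reduction in~(1): passing from ``correct infinitely often'' (all that eventual difference forbids) to ``correct inside every window of fixed length'' (what constant prediction demands), uniformly over every branch a predictor might follow.
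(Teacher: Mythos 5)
The statement is presented in the paper purely as a citation lemma (no proof is given; both items are quoted from \cite{kamoeva} and \cite{BreGarV}), so your decision to cite both results is exactly what the paper does, and your bookkeeping is correct where it is checkable: the identifications $\bfrak(\Ed)=\non(\Mwf)$, $\dfrak(\Ed)=\cov(\Mwf)$, the dualization, and the observation that the complementary bounds $\efrak^\const\leq\non(\Mwf)$, $\cov(\Mwf)\leq\vfrak^\const$ already follow from \autoref{b1}~\ref{b1:c} and~\ref{b1:e}. Likewise the ``in particular'' clause of (2) is correctly derived via $\efrak_2^\const\leq\non(\Ewf)\leq\non(\Mwf)\leq\vfrak_2^\const$ (it also follows one step more directly from $\Iwf^\const_2\subseteq\Mwf(2^\omega)$, again \autoref{b1}~\ref{b1:c}).

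Where you go beyond citation, though, there is a genuine gap, and it is precisely the one you flag. Your route asserts the Tukey connection $\Ed^\perp\leqT\Ebf^\cpr$, which is strictly stronger than the two cardinal inequalities you are importing from \cite{kamoeva}; neither the citation nor your sketch establishes it. The obstruction is structural: the second component $\Psi_+(\sigma)$ of a Tukey connection must be a function of $\sigma$ alone, but the guesses $\sigma(x_g\frestr j)$ that your ``record real'' is supposed to collect are indexed by initial segments of $x_g$ and hence depend on $g$; in the $\omega$-valued setting there are infinitely many candidate segments over each block, so there is no analogue of the finite diagonalization over ${}^{k}2$ that makes the $2$-valued reduction of \autoref{cxn:e_2} work, and no ``pinning down of the finitely many branch-values consulted'' is available. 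The proposed fix (a block-predictor system Tukey-equivalent to $\Ebf^\cpr$, a ``variant of \autoref{b0}'') is not formulated, and it is not clear that any such reduction exists. So either keep (1) as a bare citation, as the paper does, or be explicit that the uniform (Tukey) version is an unproved strengthening rather than the content of \cite{kamoeva}.
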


\section{Further generalizations: \texorpdfstring{$\efrak_{b,\rel}^\forall$}{} and \texorpdfstring{$\vfrak_{b,\rel}^\forall$}{} }\label{sec:furgen}

In this last section, inspired by Kada's~\cite{kadinfgam} work on the characterizations of some cardinals of Cichoń's diagram in terms of infinite games via predictors, we present the following generalization:

\begin{definition}
Let $b$ and ${\rel}$ be as in~\autoref{def:pred}.
\begin{enumerate}
\item For $f\in\prod b$ and $\sigma\in\Sigma_b$ say	that
$\sigma$~\emph{$\rel$-predicts}~$f$, written as $f\predby^\spr_{\rel}\sigma$ 
\[
f\predby^\spr_{\rel}\sigma\text{ iff }
\forall^\infty j\ f(j)\rel_j\sigma(f\frestr j).
\]
\item We consider two cardinal invariants related to the relational systems of
$\rel$-prediction 
\begin{align*}
\efrak_{b,\rel}^\forall
&:=\min\set{|F|}{F\subseteq\prod b\setand\neg\exists \sigma\in\Sigma_b\ \forall f\in F\colon f\predby^\spr_{\rel}\sigma},\\
\vfrak_{b,\rel}^\forall
&:=\min\set{|S|}{S\subseteq\Sigma_b\setand\forall f\in\prod b\ \exists \sigma\in S\colon f\predby^\spr_{\rel}\sigma}.
\end{align*}
\item Define the relational system $\Ebf^\spr_{b,\rel}=\la\prod b,\Sigma_b,{\predby^\spr_{\rel}}\ra$. Hence, $\efrak_{b,\rel}^\forall=\bfrak(\Ebf^\spr_{b,\rel})$ and $\vfrak_{b,\rel}^\forall=\dfrak(\Ebf^\spr_{b,\rel})$.
\end{enumerate}
Like in~\autoref{def:pred}, if $b(n)=K$ for $n<\omega$, then we replace the index~$b$
with~$K$; we omit the index~$K$, if $K=\omega$; and we omit the index~$\rel$,
if ${\rel}$ is~$=$.
\end{definition}

\begin{lemma}\label{cp-pr}
$\Ebf^\cpr_{b,\rel}\leqT
\Ebf^\spr_{b,\rel}$.
As a consequence, $\efrak_{b,\rel}^\forall\le\efrak^\const_{b,\rel}$ and\/
$\vfrak^\const_{b,\rel}\le\vfrak_{b,\rel}^\forall$.
\end{lemma}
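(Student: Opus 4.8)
The plan is to exhibit an explicit Tukey connection $(\Psi_-,\Psi_+)\colon \Ebf^\cpr_{b,\rel}\to\Ebf^\spr_{b,\rel}$. Since both relational systems have the same underlying sets $\prod b$ (for the first coordinate) and $\Sigma_b$ (for the second coordinate), the natural choice is to take $\Psi_-=\id_{\prod b}$ and $\Psi_+=\id_{\Sigma_b}$. The only thing to verify is the Tukey condition: for every $f\in\prod b$ and $\sigma\in\Sigma_b$, if $f\predby^\spr_{\rel}\sigma$ then $f\predby^\cpr_{\rel}\sigma$. In words, ordinary $\rel$-prediction (correctness on all but finitely many coordinates) is stronger than constant $\rel$-prediction (correctness within every window of some fixed length $k$).

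The verification is essentially immediate from the definitions: suppose $f\predby^\spr_{\rel}\sigma$, i.e., $\forall^\infty j\ f(j)\rel_j\sigma(f\frestr j)$. Fix $j_0\in\omega$ such that $f(j)\rel_j\sigma(f\frestr j)$ for all $j\geq j_0$. Then taking $k=j_0+1$, for every $i\in\omega$ we have $\max\{i,j_0\}\in[i,i+k)$ and $f(\max\{i,j_0\})\rel_{\max\{i,j_0\}}\sigma(f\frestr\max\{i,j_0\})$, so the witness $j=\max\{i,j_0\}$ works. Hence $f\predby^\cpr_{\rel}\sigma$ with constant $k$. (Alternatively, one can invoke the second equivalent form of $\predby^\cpr_{\rel}$ given in \autoref{def:pred}, from which the implication is even more transparent: any $k\geq 1$ witnesses it using the $\forall^\infty$ quantifier directly.)

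Having established $\Ebf^\cpr_{b,\rel}\leqT\Ebf^\spr_{b,\rel}$, the consequences follow from the general facts about Tukey connections recalled in the introduction: $\Rbf\leqT\Rbf'$ implies $\bfrak(\Rbf')\leq\bfrak(\Rbf)$ and $\dfrak(\Rbf)\leq\dfrak(\Rbf')$. Applying this with $\Rbf=\Ebf^\cpr_{b,\rel}$ and $\Rbf'=\Ebf^\spr_{b,\rel}$ gives $\efrak_{b,\rel}^\forall=\bfrak(\Ebf^\spr_{b,\rel})\leq\bfrak(\Ebf^\cpr_{b,\rel})=\efrak^\const_{b,\rel}$ and $\vfrak^\const_{b,\rel}=\dfrak(\Ebf^\cpr_{b,\rel})\leq\dfrak(\Ebf^\spr_{b,\rel})=\vfrak_{b,\rel}^\forall$.

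There is no real obstacle here; this is a soft lemma whose content is just that the ``constant'' (bounded-window) prediction relation is weaker than the ``eventual'' prediction relation, so the identity maps serve as a Tukey connection. The only mild point of care is the direction of the implication and the bookkeeping with the window length $k$, but this is routine.
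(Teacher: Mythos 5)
Your proof is correct and follows exactly the paper's route: the paper also uses the identity maps as the Tukey connection, with the one-line justification that $f\predby^\spr_{\rel}\sigma$ implies $f\predby^\cpr_{\rel}\sigma$, which you merely spell out in more detail (the choice $k=j_0+1$ and witness $j=\max\{i,j_0\}$ is the routine verification the paper leaves implicit).
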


\begin{proof}
Because $f\predby^\spr_{\rel}\sigma$ implies $f\predby^\cpr_{\rel}\sigma$.
\end{proof}

\begin{lemma}\label{leq_b}
\startlist
\begin{enumerate}[label=\rm(\arabic*)]
\item
$\efrak^\const_{\leq_b}\leq\min\{\efrak^\const_\leq,\efrak^\const_b\}$
and\/
$\max\{\vfrak^\const_\leq,\vfrak^\const_b\}\leq\vfrak^\const_{\leq_b}$. 
\item\label{leq_b.2}
$\efrak_{\leq_b}^\forall=\min\{\efrak_\leq^\forall,\efrak_b^\forall\}$ and\/
$\max\{\vfrak_\leq^\forall,\vfrak_b^\forall\}=\vfrak_{\leq_b}^\forall$.
\end{enumerate}
\end{lemma}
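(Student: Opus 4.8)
\textbf{Proof plan for \autoref{leq_b}.}
The plan is to treat the two parts uniformly by observing that constant prediction and $\forall$-prediction differ only in whether a uniform interval-length witness is demanded, and that in both cases the relation $\leq_b$ is the ``intersection'' of the relations $\leq$ (i.e.\ $\leq_1$) and $=_b$ (i.e.\ $\equiv_b$ read on $\prod b$). Concretely, for $j\in\omega$, $m,m'\in\omega$ we have $m\leq_{b(j)}m'$ iff ($m\leq m'$ and $m\equiv m'\bmod b(j)$); and on $\prod b$, modular equality $\equiv_{b(j)}$ is just genuine equality. So a predictor constantly $\leq_b$-predicts $f$ exactly when it does so both in the $\leq$-sense and in the $=$-sense simultaneously on the same intervals. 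The inequalities in part~(1) are then the ``easy half'': I would exhibit the two Tukey connections $\Ebf^\cpr_{\leq}\leqT\la\baire,\Sigma_\omega,{\predby^\cpr_{\leq_b}}\ra$ and $\Ebf^\cpr_b\leqT\la\baire,\Sigma_\omega,{\predby^\cpr_{\leq_b}}\ra$. For the first, use $(\id,\id)$ after noting $\leq_b$ refines $\leq$, hence $f\predby^\cpr_{\leq_b}\sigma\Rightarrow f\predby^\cpr_{\leq}\sigma$. For the second, compose with the projection/restriction maps $\pi_b,r_b$ as in \autoref{b0}: if $f\in\prod b$ and $f\predby^\cpr_{\leq_b}\sigma$, then on a tail $\sigma(f\frestr j)\bmod b(j)=\sigma(f\frestr j)$ is forced to be the correct value modulo $b(j)$, so $r_b(\sigma)$ constantly $=$-predicts $f$; this gives $\efrak^\const_{\leq_b}\leq\efrak^\const_b$ and dually. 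Taking the min over the two sources yields $\efrak^\const_{\leq_b}\leq\min\{\efrak^\const_\leq,\efrak^\const_b\}$ and the dual sup inequality.

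Part~(2) is the real content, because there we claim \emph{equality}, so I also need the reverse Tukey connection $\Ebf^\spr_{\leq_b}\leqT\la\text{product of }\Ebf^\spr_\leq\text{ and }\Ebf^\spr_b\ra$ in a form that yields $\efrak_{\leq_b}^\forall\geq\min\{\efrak_\leq^\forall,\efrak_b^\forall\}$ and $\vfrak_{\leq_b}^\forall\leq\max\{\vfrak_\leq^\forall,\vfrak_b^\forall\}$. The key point, available for $\forall$-prediction but \emph{not} for constant prediction, is that ``eventually always'' is closed under finite intersection with no loss: if a predictor $\tau$ eventually $\leq$-predicts $f$ (i.e.\ $\forall^\infty j\ f(j)\leq\tau(f\frestr j)$) and a predictor $\rho$ eventually $=$-predicts the projection of $f$ (equivalently, on $\prod b$, eventually $\equiv_{b(j)}$-predicts $f$), then the single predictor $\upsilon$ defined by $\upsilon(s):=$ the unique element of $[\,0,\ \tau(s)\,]$ congruent to $\rho(s)\bmod b(|s|)$, if such exists and lies below $b(|s|)$, and $\upsilon(s):=0$ otherwise, eventually $\leq_b$-predicts $f$: on the common tail where both succeed, $f(j)\leq\tau(f\frestr j)$ and $f(j)\equiv\rho(f\frestr j)$, so the ``unique element'' is exactly $f(j)$. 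That handles $\vfrak^\forall$: given a $\predby^\spr_\leq$-dominating family $V_0$ and a $\predby^\spr_b$-dominating family $V_1$, the family $\set{\upsilon_{\tau,\rho}}{\tau\in V_0,\ \rho\in V_1}$ would be too big, so instead I would fix a bijective enumeration and, for each $f$, note there exist $\tau\in V_0$ and $\rho\in V_1$ working for $f$ simultaneously, then argue the cardinality bound $\max\{\vfrak_\leq^\forall,\vfrak_b^\forall\}$ by a standard pairing/coding trick (the set of pairs has size $\max$ of the two when both are infinite cardinals, and the degenerate finite case is trivial since all these numbers are $\geq\aleph_0$). Dually, for $\efrak^\forall$: given $F\subseteq\prod b$ with $|F|<\min\{\efrak_\leq^\forall,\efrak_b^\forall\}$, pick $\tau$ that $\leq$-predicts all of $F$ and $\rho$ that $=$-predicts all (projections) of $F$, and then $\upsilon_{\tau,\rho}$ as above $\leq_b$-predicts all of $F$, so $|F|<\efrak_{\leq_b}^\forall$; combined with part~(1)'s analogue of $\efrak_{\leq_b}^\forall\leq\min\{\efrak_\leq^\forall,\efrak_b^\forall\}$ (which holds for $\forall$-prediction by the identical Tukey maps, using \autoref{cp-pr}-style reasoning or redoing the two connections with $\predby^\spr$ in place of $\predby^\cpr$), we get equality.

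The main obstacle I anticipate is precisely the gap between parts~(1) and~(2): for constant prediction one cannot merge two witnesses without, in principle, enlarging the interval-length constants $k$, but more seriously the ``combine $\tau$ and $\rho$ into one $\upsilon$'' step requires $\tau$ and $\rho$ to succeed on a \emph{common} tail, which is automatic for $\forall^\infty$ but would require aligning the two families of intervals in the constant case — and that alignment generally fails, which is why only inequalities (not equalities) are asserted in~(1). So I would be careful to present the $\upsilon_{\tau,\rho}$ construction and the cardinal arithmetic $|V_0\times V_1|=\max\{|V_0|,|V_1|\}$ only in the $\forall$-setting, and to double-check the edge cases in the definition of $\upsilon$: when $\rho(s)\bmod b(|s|)$ has no representative in $[0,\tau(s)]$ below $b(|s|)$ the value is irrelevant because on the good tail this never happens, but one must still make $\upsilon$ a legitimate \emph{limited} predictor, so returning $0$ (or $\tau(s)\bmod b(|s|)$) is the safe choice. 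Finally I would remark that all of $\efrak^\forall_\leq,\efrak^\forall_b,\vfrak^\forall_\leq,\vfrak^\forall_b$ are uncountable (they bound or are bounded by classical invariants $\geq\aleph_1$), so no degenerate finite-cardinal issues arise in the $\max$/$\min$ manipulations.
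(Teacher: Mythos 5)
Your overall route is the paper's: part (1) comes from the pointwise inclusion $({\leq_b})_n\subseteq{\leq}\cap{\equiv_{b(n)}}$ together with the translation $\Ebf^\cpr_b\eqT\la\baire,\Sigma_\omega,{\predby^\cpr_{\equiv_b}}\ra$ of \autoref{b0} (and its $\predby^\spr$-analogue), and for the reverse inequalities in (2) the paper simply cites ${\predby^\spr_{\leq_b}}\supseteq{\predby^\spr_{\leq}}\cap{\predby^\spr_{\equiv_b}}$; your instinct to make explicit the merging of a $\leq$-predictor $\tau$ and an $\equiv_b$-predictor $\rho$ into a single predictor is exactly what that one-liner suppresses, and it is the right thing to spell out.

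However, your merge is carried out over the wrong space, and as written the key step fails. The invariants $\efrak^\forall_{\leq_b}$ and $\vfrak^\forall_{\leq_b}$ are the bounding and dominating numbers of $\la\baire,\Sigma_\omega,{\predby^\spr_{\leq_b}}\ra$ (compare the paper's definition of $\efrak^\const_{\leq_b}$): the functions $f$ range over all of $\baire$, not over $\prod b$, and the predictors are unrestricted, so no ``limited predictor'' constraint is relevant. Your $\upsilon$ always takes the value $\rho(s)\bmod b(|s|)$ (the residue representative below $b(|s|)$), hence $\upsilon(f\frestr j)<b(j)$; for any $f$ with $f(j)\geq b(j)$ infinitely often the requirement $f(j)\leq\upsilon(f\frestr j)$ fails at those $j$, so your $\upsilon$ neither bounds such a family nor yields a $\predby^\spr_{\leq_b}$-dominating family. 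Restricting to $F\subseteq\prod b$, as you do, changes the statement: for $f\in\prod b$ and $\sigma\in\Sigma_\omega$ the relation $f\predby^\spr_{\leq_b}\sigma$ is equivalent (via $r_b$) to plain $=$-prediction, so the identity would degenerate to $\efrak^\forall_b=\min\{\efrak^\forall_\leq,\efrak^\forall_b\}$, i.e.\ $\efrak^\forall_b\leq\bfrak$, which is not what is being claimed and is not provable. The repair is small: set $\upsilon(s)$ equal to the \emph{largest} $m\leq\tau(s)$ with $m\equiv\rho(s)\pmod{b(|s|)}$ when such $m$ exists (anything otherwise). On the common tail where $f(j)\leq\tau(f\frestr j)$ and $f(j)\equiv\rho(f\frestr j)\pmod{b(j)}$, the number $f(j)$ is among the candidates, so $f(j)\leq\upsilon(f\frestr j)$ and $f(j)\equiv\upsilon(f\frestr j)\pmod{b(j)}$, i.e.\ $f\predby^\spr_{\leq_b}\upsilon$. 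With this corrected $\upsilon$ (and $\rho$ obtained from $\efrak^\forall_b$, resp.\ a dominating family for it, through the $\predby^\spr$-version of \autoref{b0}), your cardinality bookkeeping $|V_0\times V_1|=\max\{|V_0|,|V_1|\}$ goes through and both reverse inequalities of part (2) follow; part (1) of your argument is fine as it stands.
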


\begin{proof}
All inequalities $\leq$ hold because $({\leq_b})_n\subseteq{\leq}\cap{\equiv_{b(n)}}$
and because by \autoref{b0},
$\la\prod b,\Sigma_b,{\predby^\cpr_{=}}\ra\eqT
\la\baire,\Sigma_\omega,{\predby^\cpr_{\equiv_b}}\ra$
and
$\la\prod b,\Sigma_b,{\predby^\spr_{=}}\ra\eqT
\la\baire,\Sigma_\omega,{\predby^\spr_{\equiv_b}}\ra$. The proof of the latter Tukey identity is the same as the proof of the former one.
The inequalities~$\geq$ in~\ref{leq_b.2} hold because
${\predby^\spr_{\leq_b}}\supseteq{\predby^\spr_{\leq}}\cap{\predby^\spr_{\equiv_b}}$.
\end{proof}

We first introduce more terminology.

\begin{definition}\label{def:Ed}
Let $b:=\seq{b(n)}{n<\omega}$ be a sequence of non-empty sets. Define the relational system $\Ed_b:=\la\prod b,\prod b,{\neq^\infty}\ra$ where $x=^\infty y$ means $x(n)=y(n)$ for infinitely many $n$. The relation $x\neq^\infty y$ expresses that \emph{$x$ and $y$ are eventually different}. We just write $\Ed:=\Ed_\omega$ (when $b$ is the constant function $\omega$). Denote $\balc_{b,1}:=\bfrak(\Ed_b)$ and $\dalc_{b,1}:=\dfrak(\Ed_b)$.
\end{definition}

\begin{lemma}\label{lem:Ed}
$\Ed_b\leqT\Ebf^\spr_{b,\ne}$. In particular, $\efrak_{b,\ne}^\forall\leq\balc_{b,1}$ and\/ $\dalc_{b,1}\leq\vfrak_{b,\ne}^\forall$.
\end{lemma}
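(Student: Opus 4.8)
The plan is to exhibit an explicit Tukey connection $(\Psi_-,\Psi_+)\colon \Ed_b\to\Ebf^\spr_{b,\ne}$, i.e.\ maps $\Psi_-\colon\prod b\to\prod b$ and $\Psi_+\colon\Sigma_b\to\prod b$ such that whenever $\Psi_-(x)\predby^\spr_{\ne}\sigma$ we have $x\neq^\infty\Psi_+(\sigma)$ (recall $\neq^\infty$ is the relation of $\Ed_b$, so $x\neq^\infty y$ means $x(n)=y(n)$ for only finitely many $n$, equivalently $x$ and $y$ are eventually different in the sense of \autoref{def:Ed}). The key observation is that a predictor $\sigma\in\Sigma_b$ and a real $f\in\prod b$ together determine, by the usual recursion $g_\sigma(n)=\sigma(g_\sigma\restrictto n)$, a canonical real $g_\sigma\in\prod b$; but to make this work as a Tukey map I should instead run the recursion relative to the input real. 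The cleanest choice is to take $\Psi_-=\id_{\prod b}$ and to note that if we knew $f$ in advance we could set $\Psi_+(\sigma):=\seq{\sigma(f\restrictto n)}{n<\omega}$ — but $\Psi_+$ is not allowed to depend on $f$. So the correct move is the standard trick: set $\Psi_-(x)$ to be a real that "codes" all the information $\sigma$ would need. Since $\sigma$ is a single function on $\Seq(b)$, define $\Psi_+(\sigma)$ directly via the self-referential recursion $h_\sigma(n)=\sigma(h_\sigma\restrictto n)$, and let $\Psi_-(x):=x$; then one checks that $x\predby^\spr_{\ne}\sigma$ forces $x(n)\ne \sigma(x\restrictto n)$ for all large $n$, while we want to compare $x$ with $h_\sigma$, which requires $x=h_\sigma$ on an initial segment — and that need not happen. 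Hence a genuine coding of $x$ into $\Psi_-(x)$ is needed.

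Concretely, I would use the bijective-pairing coding that appears throughout this subject (and implicitly in Kada's work): fix for each $n$ a way to view $f(n)\in b(n)$ as carrying one "digit" of an auxiliary sequence, interleaving coordinates so that $\Psi_-(x)$ records, in coordinates $2n$, the value $x(n)$, and uses coordinates $2n+1$ freely. Then a predictor $\sigma$ that eventually-differently predicts $\Psi_-(x)$ at the even coordinates is, after translating back through the decoding, exactly producing a real $y=\Psi_+(\sigma)\in\prod b$ with $y(n)\ne x(n)$ for all large $n$, i.e.\ $x\neq^\infty y$ in the $\Ed_b$ sense. The point is that at an even coordinate $2n$ the value $\Psi_-(x)(2n)=x(n)$ depends only on $x\restrictto(n+1)$, hence $\Psi_-(x)\restrictto 2n$ is a function of $x\restrictto n$ alone, so the quantity $\sigma\big(\Psi_-(x)\restrictto 2n\big)$ is "the same for all $x$ agreeing up to $n$" only after we also fix what happens at odd coordinates — which we can pin down by a fixed recursion, since the odd coordinates are not constrained by $x$. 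This lets me define $\Psi_+(\sigma)(n)$ to be $\sigma$ evaluated along the (now $x$-independent, because it only uses odd-coordinate recursion plus a running guess) canonical partial play, and the verification that $\Psi_-(x)\predby^\spr_{\ne}\sigma \Rightarrow \forall^\infty n\ x(n)\ne \Psi_+(\sigma)(n)$ is a short unwinding of definitions once the bookkeeping is set up. The "In particular" clause is then immediate from the general facts about Tukey connections recorded in the introduction: $\Ed_b\leqT\Ebf^\spr_{b,\ne}$ gives $\efrak(\Ebf^\spr_{b,\ne})=\bfrak(\Ebf^\spr_{b,\ne})\le\bfrak(\Ed_b)=\balc_{b,1}$ and dually $\dalc_{b,1}=\dfrak(\Ed_b)\le\dfrak(\Ebf^\spr_{b,\ne})=\vfrak_{b,\ne}^\forall$.

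The main obstacle is getting the coding $\Psi_-$ to be simultaneously (i) a well-defined map into $\prod b$ (not merely $\xprod b$ or $\baire$), which forces me to interleave within a single product rather than switch spaces, and (ii) such that the induced $\Psi_+$ is a legitimate function $\Sigma_b\to\prod b$ not secretly using $x$. There is a subtlety in whether to aim the connection at $\Ed_b$ with the full product $b$ or to first pass, via \autoref{b0}, to a more convenient $b'$; given that \autoref{def:Ed} fixes $\Ed_b$ for the given $b$, I expect the honest route is to handle arbitrary $b$ directly, using a pairing function $b(2n)\times b(2n+1)$ (or a fixed slab of consecutive coordinates of $b$) large enough to encode one coordinate of the original $\prod b$ — which is why the hypotheses $b(n)\ge 2$ in \autoref{def:pred} matter. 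An alternative, possibly shorter, plan is to observe that $\predby^\spr_{\ne}$-prediction of $f$ is equivalent to eventual difference of $f$ from the single real $\seq{\sigma(f\restrictto n)}{n<\omega}$ only when $\sigma$ is "blind" to $f$; so instead define $\Psi_-$ to send $x$ to itself and $\Psi_+$ to send $\sigma$ to the $\le^*$-least real dominating all $\seq{\sigma(s)}{|s|=n,\ s\in\Seq(b)}$ coordinatewise — but this lands in $\baire$, not $\prod b$, and does not give eventual difference, so I would discard it and commit to the interleaving-code argument above.
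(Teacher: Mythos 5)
Your instinct to distrust the naive connection is correct, but your replacement does not repair it, and in fact nothing can at this level of generality. Note first that the map you considered and rejected ($\Psi_-=\id$, $\Psi_+(\sigma)=f_\sigma$ where $f_\sigma(n)=\sigma(f_\sigma\frestr n)$) is exactly the paper's proof, and it fails for precisely the reason you give: the hypothesis $x\predby^\spr_{\ne}\sigma$ only controls $\sigma$ along initial segments of $x$, not along $f_\sigma$. Concretely, for $b\equiv 3$ let $\sigma(s)=0$ if $s$ is constantly $0$ and $\sigma(s)=2$ otherwise, and let $x=(1,0,0,0,\dots)$: then $x(j)\ne\sigma(x\frestr j)$ for every $j$, yet $f_\sigma$ is the zero sequence and $x(n)=f_\sigma(n)$ for all $n\ge1$, so $x\predby^\spr_{\ne}\sigma$ while $\neg(x\neq^\infty f_\sigma)$. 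Your interleaving coding has the same defect one layer deeper: after coding $x(n)$ into coordinate $2n$, the value $\sigma\bigl(\Psi_-(x)\frestr 2n\bigr)$ still depends on $x\frestr n$ (that is exactly what the even coordinates record), so it cannot be read off as a single real $\Psi_+(\sigma)$ chosen independently of $x$; fixing the odd coordinates by a recursion does not remove this dependence, and if you instead evaluate $\sigma$ along an $x$-independent canonical play, then the hypothesis $\Psi_-(x)\predby^\spr_{\ne}\sigma$ gives no information about those values. The ``short unwinding of definitions'' you defer is precisely the step that cannot be carried out.

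Moreover, the statement itself fails for general $b$, so no bookkeeping can rescue the plan. For $b\equiv2$, being eventually different from $x$ in $\cantor$ means being $=^*$-equal to $1-x$, so any two reals that are not $=^*$-equal form an $\Ed_2$-unbounded set and $\balc_{2,1}=2$; on the other hand $\efrak_{2,\ne}^\forall=\efrak_{2}^\forall\geq\aleph_1$, since flipping a predictor turns $\ne$-prediction into $=$-prediction and a single predictor $=$-predicts every countable subset of $\cantor$. Hence $\efrak_{b,\ne}^\forall\leq\balc_{b,1}$, and a fortiori $\Ed_b\leqT\Ebf^\spr_{b,\ne}$, are false for $b\equiv2$. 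What is provable in general is the reverse relation $\Ebf^\spr_{b,\ne}\leqT\Ed_b$, witnessed by $\Psi_-=\id$ and $\Psi_+(y)=\sigma_y$ with $\sigma_y(s)=y(|s|)$, which yields $\balc_{b,1}\leq\efrak_{b,\ne}^\forall$ and $\vfrak_{b,\ne}^\forall\leq\dalc_{b,1}$; the direction claimed in the lemma can only hold under additional hypotheses on $b$ (for instance $b=\omega$, where both sides of the two inequalities equal $\non(\Mwf)$ and $\cov(\Mwf)$ respectively, by \autoref{thm:kada} and Bartoszy\'nski's characterization of the meager ideal), and any correct write-up should make such a restriction explicit.
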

\begin{proof}
Define $\Psi_-\colon\prod b\to\prod b$, defined as the identity map and $\Psi_+\colon\Sigma_b\to\prod b$ is defined as follows: For each $\sigma\in\Sigma_b$ define $\Psi_+(\sigma)=f_\sigma\in\prod b$ recursively by letting $f_\sigma(n)=\sigma(f_\sigma\frestr n)$ for all $n$. Note that it is clear that
$(\Psi_-,\Psi_+)$ is the required Tukey connection.
\end{proof}

Based on Kada's~\cite{kadinfgam} work on game-theoretic characterizations of the uniformity and
covering of the meager ideal and bounding and dominating number in terms of predictors, we can get combinatorial characterizations of some cardinal invariants in Cicho\'n's diagram:

\begin{lemma}\label{thm:kada}
\startlist
\begin{enumerate}[label=\rm(\arabic*)]
\item\label{thm:kada:a} 
$\efrak_{\neq}^\forall=\non(\Mwf)$ and\/ 
$\vfrak_{\neq}^\forall=\cov(\Mwf)$.
\item\label{thm:kada:b} 
$\efrak_{\leq}^\forall=\bfrak$ and\/ 
$\vfrak_{\leq}^\forall=\dfrak$.
\end{enumerate}
\end{lemma}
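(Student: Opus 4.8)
The plan is to reduce both parts of \autoref{thm:kada} to Kada's game-theoretic characterizations by identifying the relational systems $\Ebf^\spr_{\neq}$ and $\Ebf^\spr_{\leq}$ (on $\baire$ with full predictors $\Sigma_\omega$) with relational systems that Kada~\cite{kadinfgam} analyzed. For \ref{thm:kada:a}, recall the classical eventually different characterization $\non(\Mwf)=\bfrak(\Ed)$ and $\cov(\Mwf)=\dfrak(\Ed)$ (Bartoszy\'nski), so by \autoref{lem:Ed} one already has $\efrak_{\neq}^\forall\le\non(\Mwf)$ and $\cov(\Mwf)\le\vfrak_{\neq}^\forall$; the reverse inequalities $\non(\Mwf)\le\efrak_{\neq}^\forall$ and $\vfrak_{\neq}^\forall\le\cov(\Mwf)$ are what require Kada's argument. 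Specifically, I would exhibit a Tukey connection $\Ebf^\spr_{\neq}\leqT\Ed$: given a real $g\in\baire$, build a predictor $\sigma_g$ that "looks back" over a block of coordinates and, using the eventual-difference data for $g$, outputs the unique (or an appropriate) value forcing $f(j)=\sigma_g(f\restriction j)$ on a cofinal set whenever $f$ fails to be eventually different from the decoded sequence; conversely decode a predictor into a single real. This is precisely the content of Kada's predictor characterization of $\cov(\Mwf)$ and $\non(\Mwf)$, and I would cite it, spelling out how the $\predby^\spr_{\neq}$ relation matches his game payoff condition.

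For \ref{thm:kada:b}, the strategy is analogous. By \autoref{thm:kada:b}'s target equalities and the general inequalities, first note $\efrak_{\leq}^\forall\le\dfrak$ and $\bfrak\le\vfrak_{\leq}^\forall$ are the "easy" directions: given a predictor $\sigma$, the real $f_\sigma(j)=\sigma(f_\sigma\restriction j)+1$ (as already used in the proof of \autoref{b2}) witnesses that a small family of predictors cannot $\leq$-predict everything unless it dominates, and dually a small family of reals is $\leq$-predicted by one predictor whenever it is bounded. Then the reverse inequalities $\dfrak\le\efrak_{\leq}^\forall$ and $\vfrak_{\leq}^\forall\le\bfrak$ come from Kada's characterization of $\bfrak$ and $\dfrak$ via predictors: a dominating function can be coded into a $\predby^\spr_{\leq}$-predictor by having $\sigma$ guess an upper bound for $f(j)$ from $f\restriction j$ using the growth rate of the dominating function, and conversely a $\predby^\spr_{\leq}$-dominating family of predictors yields (via the recursive decoding $f_\sigma$) a dominating family of reals. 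I would phrase this as a pair of Tukey connections $\Ebf^\spr_{\leq}\eqT\Dbf$ where $\Dbf=\la\baire,\baire,{\le^*}\ra$ is the dominating relational system, citing Kada for the non-trivial halves.

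The main obstacle will be the direction $\non(\Mwf)\le\efrak_{\neq}^\forall$ (equivalently $\Ebf^\spr_{\neq}\leqT\Ed$) and its dual: turning an arbitrary real into a predictor that is "eventually-different-catching" requires the block-coding trick where one reads off from $f\restriction j$ enough past coordinates to recover which eventually-different target is being tracked, and one must check the bookkeeping so that a single $k$-less failure of $\neq^\infty$ translates into $f(j)=\sigma(f\restriction j)$ for cofinally (indeed eventually, modulo the block structure) many $j$. This is exactly where Kada's interval-partition argument enters, and I would either reproduce it in a couple of lines or, more economically, quote \cite[the relevant theorem]{kadinfgam} verbatim after verifying that his "predicts on an interval" notion coincides with our $\predby^\spr_{\rel}$ for ${\rel}={\neq}$ and ${\rel}={\leq}$. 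The measure-theoretic/category input — that the eventually different number on $\baire$ equals $\non(\Mwf)$ — is standard and can be cited from \cite{BJ}, so no genuinely new combinatorics is needed beyond transcribing Kada's correspondence.
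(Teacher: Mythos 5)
Your overall strategy --- identify $\Ebf^\spr_{\ne}$ with $\Ed$ and $\Ebf^\spr_{\le}$ with $\Dbf=\la\baire,\baire,{\le^*}\ra$, and quote Kada/Bartoszy\'nski for the nonelementary inputs --- is the right one, and it matches the paper, which in fact offers no proof at all and simply attributes the lemma to \cite{kadinfgam}. But the details you supply contain genuine errors. In part \ref{thm:kada:b} you have swapped the inequalities: the ``reverse'' inequalities you propose to prove, $\dfrak\le\efrak_{\le}^\forall$ and $\vfrak_{\le}^\forall\le\bfrak$, are consistently false (together with your ``easy'' halves they would yield $\efrak_{\le}^\forall=\dfrak$ and $\vfrak_{\le}^\forall=\bfrak$, the opposite pairing to the statement). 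What actually needs an argument is $\efrak_{\le}^\forall\le\bfrak$ and $\dfrak\le\vfrak_{\le}^\forall$, i.e.\ the connection $\Dbf\leqT\Ebf^\spr_{\le}$, and your proposed decoding via the diagonal real $f_\sigma(j)=\sigma(f_\sigma\frestr j)+1$ does not furnish it: for $\sigma(s)=s(0)$ (and $\sigma(\emptyset)=0$) one gets $f_\sigma=(1,2,2,\dots)$, yet every constant function $g\equiv N$ satisfies $g\predby^\spr_{\le}\sigma$ while $g\not\le^*f_\sigma$. The correct $\Psi_+$ is an iterated-maximum closure: for each finite $t$ let $h_{\sigma,t}$ extend $t$ by $h_{\sigma,t}(j)=\max\{\sigma(s): s\in\prod_{i<j}(h_{\sigma,t}(i)+1)\}$ for $j\ge|t|$, and take $\Psi_+(\sigma)$ to be a $\le^*$-bound of the countably many $h_{\sigma,t}$; an induction then shows $g\predby^\spr_{\le}\sigma$ implies $g\le^*\Psi_+(\sigma)$. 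The other connection $\Ebf^\spr_{\le}\leqT\Dbf$ is the trivial one, $\Psi_+(g)=\sigma_g$ with $\sigma_g(s)=g(|s|)$.

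In part \ref{thm:kada:a} you have misjudged where the work lies. The direction you call the main obstacle, $\Ebf^\spr_{\ne}\leqT\Ed$, is trivial: take $\Psi_-=\id$ and $\Psi_+(g)=\sigma_g$ with $\sigma_g(s)=g(|s|)$; if $f$ is eventually different from $g$ then $f(j)\ne\sigma_g(f\frestr j)$ for all but finitely many $j$. No block coding or interval partition is needed, and note that the implication you describe (``whenever $f$ fails to be eventually different from the decoded real, $\sigma_g$ catches $f$ cofinally'') is the converse of what a Tukey connection requires; the requirement is $\Psi_-(f)\ne^\infty g\Rightarrow f\predby^\spr_{\ne}\Psi_+(g)$. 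The genuinely nontrivial ingredients for \ref{thm:kada:a} are exactly the ones you do cite: \autoref{lem:Ed} (already proved in the paper, giving $\Ed\leqT\Ebf^\spr_{\ne}$) and Bartoszy\'nski's theorem $\bfrak(\Ed)=\non(\Mwf)$, $\dfrak(\Ed)=\cov(\Mwf)$ from \cite{BJ}. With the two corrections above your plan does yield $\Ebf^\spr_{\ne}\eqT\Ed$ and $\Ebf^\spr_{\le}\eqT\Dbf$ and hence the lemma, but as written the part-(2) inequalities and the $f_\sigma$-decoding step would not survive scrutiny.
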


A new combinatorial characterization of the uniformity and coverage of $\Mwf$ is obtained below. 

\begin{lemma}\label{newchM}
$\efrak^\const_{\neq}=\non(\Mwf)$
and\/
$\vfrak^\const_{\neq}=\cov(\Mwf)$.
\end{lemma}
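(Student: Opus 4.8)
The plan is to sandwich $\efrak^\const_{\neq}$ between $\non(\Mwf)$ and a quantity already known to equal $\non(\Mwf)$, and dually for $\vfrak^\const_{\neq}$. One inequality is free: by \autoref{b1}~\ref{b1:c} we have $\Iwf^\const_{\neq}\subseteq\Mwf(\baire)$, and by \autoref{b1}~\ref{b1:e}, $\efrak^\const_{\neq}=\non(\Iwf^\const_{\neq})\leq\non(\Mwf)$ and $\cov(\Mwf)\leq\cov(\Iwf^\const_{\neq})=\vfrak^\const_{\neq}$ (these are also recorded in \autoref{b2}~\ref{b2:a:3} and~\ref{b2:b:3}). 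So it remains to prove $\non(\Mwf)\leq\efrak^\const_{\neq}$ and $\vfrak^\const_{\neq}\leq\cov(\Mwf)$; equivalently, by duality, it suffices to produce a single Tukey connection $\Ebf^\cpr_{\neq}\leqT \Rbf$ where $\Rbf$ is some relational system with $\bfrak(\Rbf)=\non(\Mwf)$ and $\dfrak(\Rbf)=\cov(\Mwf)$.

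The natural candidate for $\Rbf$ is $\Ebf^\spr_{\neq}$ itself: by \autoref{thm:kada}~\ref{thm:kada:a}, $\bfrak(\Ebf^\spr_{\neq})=\efrak^\forall_{\neq}=\non(\Mwf)$ and $\dfrak(\Ebf^\spr_{\neq})=\vfrak^\forall_{\neq}=\cov(\Mwf)$. However, \autoref{cp-pr} only gives $\Ebf^\cpr_{\neq}\leqT\Ebf^\spr_{\neq}$ in the wrong direction for what we want here — that lemma yields $\efrak^\forall_{\neq}\leq\efrak^\const_{\neq}$, i.e. $\non(\Mwf)\leq\efrak^\const_{\neq}$, which is in fact exactly the inequality we still need! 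So the first key step is simply: invoke \autoref{cp-pr} with $b=\omega$ and $\rel={\neq}$ to get $\efrak^\forall_{\neq}\leq\efrak^\const_{\neq}$ and $\vfrak^\const_{\neq}\leq\vfrak^\forall_{\neq}$, then rewrite the left- and right-hand sides using \autoref{thm:kada}~\ref{thm:kada:a}. This gives $\non(\Mwf)\leq\efrak^\const_{\neq}$ and $\vfrak^\const_{\neq}\leq\cov(\Mwf)$.

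Combining the two halves: $\non(\Mwf)\leq\efrak^\const_{\neq}\leq\non(\Mwf)$ forces equality, and dually $\cov(\Mwf)\leq\vfrak^\const_{\neq}\leq\cov(\Mwf)$. I would write the proof in exactly this order: (i) cite \autoref{b1}~\ref{b1:c},\ref{b1:e} (or \autoref{b2}) for $\efrak^\const_{\neq}\leq\non(\Mwf)$ and $\cov(\Mwf)\leq\vfrak^\const_{\neq}$; (ii) cite \autoref{cp-pr} and \autoref{thm:kada}~\ref{thm:kada:a} for the reverse inequalities; (iii) conclude.

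The only point that deserves care — and the place I'd expect a referee to look — is whether \autoref{thm:kada}~\ref{thm:kada:a} is genuinely available as a black box at this stage; in the excerpt it is stated without proof (attributed to Kada's game-theoretic characterizations), so in the final text one should either supply that proof or give a precise citation. There is no combinatorial obstacle beyond that: the constant-prediction relation $\predby^\cpr_{\neq}$ sits between the meager-ideal covering/uniformity bounds on one side (via the closed nowhere dense sets $A^{\sigma,k}_{\neq}$) and the eventual-prediction relation $\predby^\spr_{\neq}$ on the other, and the latter is pinned to $\non(\Mwf)$, $\cov(\Mwf)$ by Kada. So the whole argument is a two-sided squeeze with no new construction required.
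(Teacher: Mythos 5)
Your proof is correct and follows essentially the same route as the paper: one inequality from $\Iwf^\const_{\neq}\subseteq\Mwf$ (i.e.\ \autoref{b1}/\autoref{b2}), the other from \autoref{cp-pr} combined with \autoref{thm:kada}~\ref{thm:kada:a}, then squeeze. The paper's own proof is exactly this two-line argument, so no further comment is needed.
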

\begin{proof}
We know $\non(\Mwf)\le\efrak^\const_{b,\neq}$ and\/
$\vfrak^\const_{b,\neq}\le\cov(\Mwf)$ by~\autoref{cp-pr} and \autoref{thm:kada} \ref{thm:kada:a}; and since $\efrak^\const_{\neq}\leq\non(\Mwf)$
and\/
$\cov(\Mwf)\leq\vfrak^\const_{\neq}$, we then conclude $\efrak^\const_{\neq}=\non(\Mwf)$
and\/
$\vfrak^\const_{\neq}=\cov(\Mwf)$.
\end{proof}

We could ask if $\efrak_{\leq}^\const=\bfrak$ and $\vfrak_{\leq}^\const=\dfrak$? By ~\autoref{cp-pr} and \autoref{thm:kada} \ref{thm:kada:b}, we know $\bfrak\leq\efrak_{\leq}^\const$ and $\vfrak_{\leq}^\const\leq\dfrak$, but the converse does hold because it is known the consistency of $\efrak^\const>\bfrak$ and $\vfrak^\const>\dfrak$ (see~\cite[Lem.~3.6]{BreIII}); and since $\efrak_\leq^\const\geq\efrak^\const$ and $\vfrak^\const\geq\vfrak_\leq^\const$ by \autoref{b2} \ref{b2:a}, we derive the consistency of $\efrak_\leq^\const>\bfrak$ and $\vfrak^\const_\leq>\dfrak$.

\autoref{thm:kada} \ref{thm:kada:b} and \autoref{newchM} yield:

\begin{corollary}
$\add(\Mwf)=\min\{\efrak_\le^\forall,\vfrak_{\ne}^\const\}$ and\/ $\cof(\Mwf)=\min\{\vfrak_\le^\forall,\efrak_{\ne}^\const\}$.
\end{corollary}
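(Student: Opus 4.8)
The plan is to use the well-known Miller--Truss style characterizations of $\add(\Mwf)$ and $\cof(\Mwf)$ in terms of $\bfrak$, $\cov(\Mwf)$, $\dfrak$, and $\non(\Mwf)$, namely
\[
\add(\Mwf)=\min\{\bfrak,\cov(\Mwf)\}\setand\cof(\Mwf)=\max\{\dfrak,\non(\Mwf)\},
\]
and then simply substitute the combinatorial identities already established in the excerpt. First I would invoke \autoref{thm:kada}~\ref{thm:kada:b}, which gives $\efrak_\le^\forall=\bfrak$ and $\vfrak_\le^\forall=\dfrak$. Next I would invoke \autoref{newchM}, which gives $\efrak^\const_{\ne}=\non(\Mwf)$ and $\vfrak^\const_{\ne}=\cov(\Mwf)$. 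Feeding these four equalities into the Miller--Truss formulas yields immediately
\[
\add(\Mwf)=\min\{\bfrak,\cov(\Mwf)\}=\min\{\efrak_\le^\forall,\vfrak^\const_{\ne}\}
\]
and
\[
\cof(\Mwf)=\max\{\dfrak,\non(\Mwf)\}=\max\{\vfrak_\le^\forall,\efrak^\const_{\ne}\}.
\]

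One point that deserves a word of care: the corollary as stated writes $\cof(\Mwf)=\min\{\vfrak_\le^\forall,\efrak_{\ne}^\const\}$, i.e.\ with ``$\min$'' rather than ``$\max$''. This is almost certainly a typo for ``$\max$'', since $\cof(\Mwf)=\max\{\dfrak,\non(\Mwf)\}$ and in general $\cof(\Mwf)$ need not equal $\min\{\dfrak,\non(\Mwf)\}$ — indeed it is consistent that $\dfrak<\non(\Mwf)$ (e.g.\ in the random real model) and also that $\non(\Mwf)<\dfrak$ (e.g.\ in a Hechler model). In the write-up I would state the corollary with $\max$ in the second clause and $\min$ in the first, and note that both are instances of the two Miller--Truss equalities; if the paper truly intends ``$\min$'' somewhere, it can only be the additivity clause. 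So the actual content of the proof is just the two quoted facts plus the two classical decompositions of the meager-ideal additivity and cofinality, and there is essentially no obstacle beyond citing them correctly.

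The only genuinely non-trivial ingredient is the classical theorem $\add(\Mwf)=\min\{\bfrak,\cov(\Mwf)\}$ and $\cof(\Mwf)=\max\{\dfrak,\non(\Mwf)\}$ (Miller, Truss; see Bartoszy\'nski--Judah~\cite{BJ}), which I would cite rather than reprove. Everything else — the passage from $\efrak_\le^\forall,\vfrak_\le^\forall$ to $\bfrak,\dfrak$ and from $\efrak^\const_{\ne},\vfrak^\const_{\ne}$ to $\non(\Mwf),\cov(\Mwf)$ — is already done in \autoref{thm:kada} and \autoref{newchM}. Thus the proof is a two-line substitution argument and the ``hard part'' is merely getting the $\min$/$\max$ bookkeeping and the citation right.

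\begin{proof}
Recall the classical Miller--Truss decompositions $\add(\Mwf)=\min\{\bfrak,\cov(\Mwf)\}$ and $\cof(\Mwf)=\max\{\dfrak,\non(\Mwf)\}$ (see~\cite{BJ}). By \autoref{thm:kada}~\ref{thm:kada:b} we have $\efrak_\le^\forall=\bfrak$ and $\vfrak_\le^\forall=\dfrak$, and by \autoref{newchM} we have $\vfrak^\const_{\ne}=\cov(\Mwf)$ and $\efrak^\const_{\ne}=\non(\Mwf)$. Substituting these four equalities into the two decompositions yields $\add(\Mwf)=\min\{\efrak_\le^\forall,\vfrak_{\ne}^\const\}$ and $\cof(\Mwf)=\max\{\vfrak_\le^\forall,\efrak_{\ne}^\const\}$.
\end{proof}
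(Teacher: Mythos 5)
Your proof is correct and follows exactly the route the paper intends: the corollary is stated as an immediate consequence of \autoref{thm:kada}~\ref{thm:kada:b} and \autoref{newchM}, with the classical decompositions $\add(\Mwf)=\min\{\bfrak,\cov(\Mwf)\}$ and $\cof(\Mwf)=\max\{\dfrak,\non(\Mwf)\}$ used implicitly. You are also right that the second clause in the stated corollary should read $\max\{\vfrak_\le^\forall,\efrak_{\ne}^\const\}$ rather than $\min$; that is a typo in the paper, and your bookkeeping is the correct version.
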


\begin{corollary}
$\efrak_{\leq_b}=\min\{\bfrak,\efrak_b^\forall\}$ and\/
$\vfrak_{\leq_b}=\max\{\dfrak,\vfrak_b^\forall\}$.
\end{corollary}

\begin{proof}
By~\autoref{thm:kada}, \autoref{b2}, \autoref{cp-pr} for $b(n)=\omega$, and
\autoref{leq_b}~\ref{leq_b.2}.
\end{proof}

\begin{definition}
\begin{align*}
& &\efrak_\ubd^\forall&= \min\set{\efrak_b^\forall}{b\in\baire}, \quad
&& &\vfrak_\ubd^\forall&= \sup\set{\vfrak_b^\forall}{b\in\baire},\\
& &\efrak_{\ubd,\ne}^\forall&= \sup\set{\efrak_{b,\ne}^\forall}{b\in\baire}, \quad
&& &\vfrak_{\ubd,\ne}^\forall&= \min\set{\vfrak_{b,\ne}^\forall}{b\in\baire},\\
& &\efrak_\hyp^\forall&= \min\set{\efrak_K^\forall}{2\leq K<\omega}, \quad
&&&\vfrak_\hyp^\forall &= \sup\set{\vfrak_K^\forall}{2\leq K<\omega},\\
& &\efrak_{\hyp,\ne}^\forall&= \sup\set{\efrak_{K,\ne}^\forall}{2\leq K<\omega}, \quad
&& &\vfrak_{\hyp,\ne}^\forall&= \min\set{\vfrak_{K,\ne}^\forall}{2\leq K<\omega}.
\end{align*}
\end{definition}

Similarly to~\autoref{b2}, we derive:

\begin{lemma}
\startlist
\begin{enumerate}[label=\rm(\arabic*)]
\item
\begin{enumerate}[label=\rm(\alph*)]
\item 
$\efrak_{2}^\forall=\efrak_{2,\neq}^\forall\leq\efrak_{b,\neq}^\forall\leq\efrak_{\ubd,\neq}^\forall\leq\efrak_{K,\neq}^\forall\leq\efrak_{\hyp,\neq}^\forall\leq\efrak_{\neq}^\forall$. 

\item
$\vfrak_{\neq}^\forall\leq\vfrak_{\hyp,\neq}^\forall\leq\vfrak_{K,\neq}^\forall\leq\vfrak_{\ubd,\neq}^\forall\leq\vfrak_{b,\neq}^\forall\leq\vfrak_{2,\neq}^\forall=\vfrak_{2}^\forall$. 
\end{enumerate}
\item 
\begin{enumerate}[label=\rm(\alph*)]
\item
$\efrak_{\hyp,\neq}^\forall\leq \efrak_{\hyp,\neq}^\const\leq\non(\Ewf)$, $\efrak_{b,\neq}^\forall\leq\efrak_{b,\neq}^\const$, and\/ $\efrak_{\ubd,\neq}\leq\efrak_{\ubd,\neq}^\const$.
\item $\cov(\Ewf)\leq\vfrak_{\hyp,\neq}^\const\leq \vfrak_{\hyp,\neq}^\forall$, $\vfrak_{b,\neq}^\const\leq\vfrak_{b,\neq}^\forall$, and\/ $\vfrak_{\ubd,\neq}^\const\leq\vfrak_{\ubd,\neq}^\forall$.
\end{enumerate}
\end{enumerate}
\end{lemma}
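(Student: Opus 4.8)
The plan is to mimic the structure of \autoref{b2}, whose analogous inequalities for the $\efrak^\const$ and $\vfrak^\const$ invariants were derived purely from the Tukey machinery of \autoref{b0}, \autoref{b0x}, \autoref{b1}. First I would observe that all the monotonicity inequalities inside part (1) — i.e. $\efrak_{2}^\forall\leq\efrak_{b,\neq}^\forall\leq\efrak_{\ubd,\neq}^\forall\leq\efrak_{K,\neq}^\forall\leq\efrak_{\hyp,\neq}^\forall\leq\efrak_{\neq}^\forall$ and its dual — follow formally from the $\sup/\min$ definitions of the $\ubd$- and $\hyp$-variants together with the basic Tukey reductions $\Ebf^\spr_{b',\ne}\leqT\Ebf^\spr_{b,\ne}$ whenever $b'\leq^* b$ (the $\forall$-analogue of \autoref{b0}~\ref{b0:3}, proved by exactly the same $\pi_b/e_b$ bookkeeping, since $f\predby^\spr_\rel\sigma$ is the same kind of statement with $\forall^\infty j$ in place of the constant-interval quantifier). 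The key remaining ingredient in (1) is the identity $\efrak_{2}^\forall=\efrak_{2,\neq}^\forall$ (and dually $\vfrak_2^\forall=\vfrak_{2,\neq}^\forall$): this is the $\forall$-version of \autoref{b0x}~\ref{neq=eq}, and I would prove it by the very same witness, the Tukey connection $(\id_{2^\omega},\Psi_+)$ with $\Psi_+(\sigma)(s)=1-\sigma(s)$, noting that on $2^\omega$ the relation $\ne$ is exactly $=$ composed with the bit-flip, so $f\predby^\spr_{=}\sigma\Leftrightarrow f\predby^\spr_{\ne}\Psi_+(\sigma)$.

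For part (2), the first clause of (2)(a), $\efrak_{\hyp,\neq}^\forall\leq\efrak_{\hyp,\neq}^\const\leq\non(\Ewf)$, is got by combining \autoref{cp-pr} (which gives $\Ebf^\cpr_{K,\ne}\leqT\Ebf^\spr_{K,\ne}$, hence $\efrak_{K,\ne}^\forall\leq\efrak_{K,\ne}^\const$, and taking the $\sup$ over $K$ gives the first inequality) with \autoref{b1}~\ref{b1:c}--\ref{b1:e}, by which $\Iwf^\const_{K,\ne}\subseteq\Ewf(\baire[K])$ yields $\efrak^\const_{K,\ne}=\non(\Iwf^\const_{K,\ne})\leq\non(\Ewf(\baire[K]))=\non(\Ewf)$, so the $\sup$ over $K$ is $\leq\non(\Ewf)$. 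The remaining two inequalities in (2)(a), $\efrak_{b,\neq}^\forall\leq\efrak_{b,\neq}^\const$ and $\efrak_{\ubd,\neq}^\forall\leq\efrak_{\ubd,\neq}^\const$ (I read the statement's ``$\efrak_{\ubd,\neq}$'' as the $\forall$-variant, a typo for $\efrak_{\ubd,\neq}^\forall$), are again immediate from \autoref{cp-pr} applied at each $b$ and then taking $\sup$; the dual clause (2)(b) is the $\perp$-translation of the same facts, using $\cov(\Ewf)\leq\vfrak^\const_{K,\ne}$ from \autoref{b1} together with \autoref{cp-pr}'s $\vfrak^\const_{b,\ne}\leq\vfrak^\forall_{b,\ne}$.

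The only place requiring genuine (though still routine) care is verifying the $\forall$-analogue of \autoref{b0}~\ref{b0:3}, i.e.\ that $\Ebf^\spr_{b,\ne}\leqT\Ebf^\spr_{b',\ne}$ and $\la\prod b',\Sigma_\omega,\predby^\spr_{\ne}\ra\leqT\la F,\Sigma_\omega,\predby^\spr_{\ne}\ra\leqT\Ebf^\spr_{b,\ne}$ for $b'\leq^* b$; I expect this to be the main obstacle only in the sense of bookkeeping, because one must recheck that the maps $\pi_b$, $e_b$, $e_{b',b}$ and the ``values smaller than $b'(j)$'' argument from the proof of \autoref{b0} go through when the constant-interval quantifier $\exists k\,\forall i\,\exists j\in[i,i+k)$ is replaced by $\forall^\infty j$ — which it does, verbatim, since the displayed implications there already passed through a $\forall^\infty j$ step. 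Hence the whole lemma reduces to quoting \autoref{b0x}~\ref{neq=eq} (in its $\forall$-form), \autoref{cp-pr}, \autoref{b1}, and the $\forall$-form of \autoref{b0}~\ref{b0:3}, and the proof is ``similarly to \autoref{b2}'' as the preamble to the statement announces.
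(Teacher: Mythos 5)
Your overall route --- deriving everything from the $\forall$-analogues of \autoref{b0} and \autoref{b0x}~\ref{neq=eq} together with \autoref{cp-pr} and \autoref{b1} --- is exactly what the paper intends by its one-line ``Similarly to \autoref{b2}'', and your treatment of part (2), of the identity $\efrak_2^\forall=\efrak_{2,\ne}^\forall$ via the bit-flip, and of the transfer maps $\pi_b,e_b,e_{b',b}$ is fine (note only the direction slip in your first paragraph, where you wrote $\Ebf^\spr_{b',\ne}\leqT\Ebf^\spr_{b,\ne}$; your last paragraph states the reduction the right way around).

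There is, however, a genuine gap in part (1): the links $\efrak^\forall_{\ubd,\ne}\leq\efrak^\forall_{K,\ne}$ and, dually, $\vfrak^\forall_{K,\ne}\leq\vfrak^\forall_{\ubd,\ne}$ do \emph{not} follow formally from the $\sup/\min$ definitions plus the monotonicity you invoke. The $\forall$-analogue of \autoref{b0}~\ref{b0:3} gives $\Ebf^\spr_{b,\ne}\leqT\Ebf^\spr_{b',\ne}$ for $b'\leq^* b$, hence $\efrak^\forall_{b',\ne}\leq\efrak^\forall_{b,\ne}$; for a finite constant $K$ and an arbitrary $b\in\baire$ this yields only $\efrak^\forall_{K,\ne}\leq\efrak^\forall_{b,\ne}$ (when $K\leq^* b$), never the inequality $\efrak^\forall_{b,\ne}\leq\efrak^\forall_{K,\ne}$ that is needed to bound the supremum $\efrak^\forall_{\ubd,\ne}=\sup\set{\efrak^\forall_{b,\ne}}{b\in\baire}$ by $\efrak^\forall_{K,\ne}$. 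In fact, since the constant functions occur among the $b$'s in that supremum, the definitions formally give the \emph{reverse} inequalities $\efrak^\forall_{\hyp,\ne}\leq\efrak^\forall_{\ubd,\ne}$ and $\vfrak^\forall_{\ubd,\ne}\leq\vfrak^\forall_{\hyp,\ne}$. So to prove the chain exactly as displayed you would need a genuinely new ingredient --- e.g.\ a Tukey reduction $\Ebf^\spr_{K,\ne}\leqT\Ebf^\spr_{b,\ne}$ for every finite-valued $b$, i.e.\ a coding of $\baire[K]$ into $\prod b$ compatible with limited predictors --- which neither your sketch nor the machinery of \autoref{b0}--\autoref{b0x} supplies; what does come out formally is the re-ordered chain $\efrak^\forall_{2,\ne}\leq\efrak^\forall_{K,\ne}\leq\efrak^\forall_{\hyp,\ne}\leq\efrak^\forall_{\ubd,\ne}\leq\efrak^\forall_{\ne}$ together with $\efrak^\forall_{b,\ne}\leq\efrak^\forall_{\ubd,\ne}$, and dually. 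At minimum this step must be flagged and argued separately rather than declared formal.
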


\autoref{cichonext} illustrates all the inequalities shown as yet.

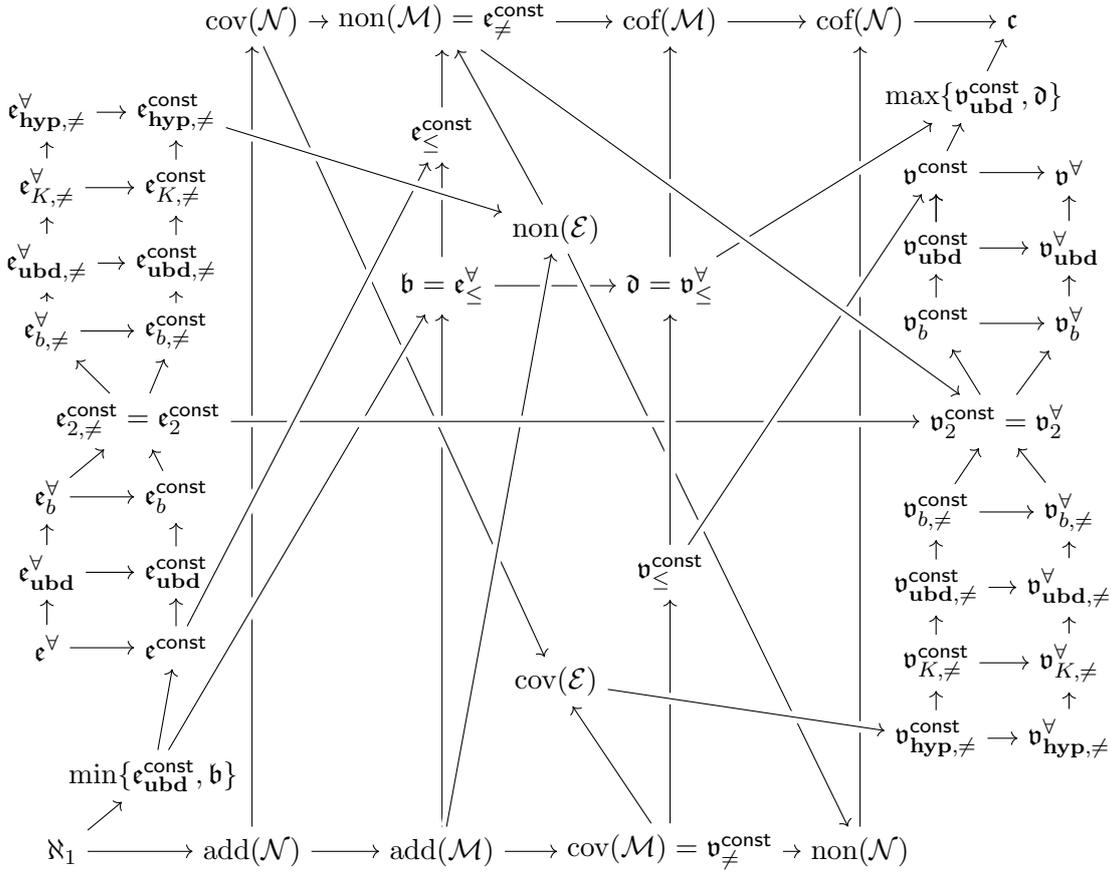
\begin{figure}[!ht]
\centering
\begin{tikzpicture}[scale=1]
\small{
\node (aleph1) at (-1,2) {$\aleph_1$};
\node (addn) at (1.5,2){$\add(\Nwf)$};
\node (covn) at (1.5,13){$\cov(\Nwf)$};
\node (nonn) at (9.5,2) {$\non(\Nwf)$};
\node (cfn) at (9.5,13) {$\cof(\Nwf)$};
\node (addm) at (4,2) {$\add(\Mwf)$};
\node (covm) at (7,2) {$\cov(\Mwf)=\vfrak^\const_{\neq}$};
\node (nonm) at (4,13) {$\non(\Mwf)=\efrak^\const_{\neq}$};
\node (cfm) at (7,13) {$\cof(\Mwf)$};
\node (b) at (4,9.5) {$\bfrak=\efrak_{\leq}^\forall$};
\node (econstlq) at (4,11.5) {$\efrak^\const_\leq$};
\node (d) at (7,9.5) {$\dfrak=\vfrak_{\leq}^\forall$};
\node (vconstleq) at (7,5.7) {$\vfrak^\const_{\leq}$};
\node (cove) at (5.5,4.25) {$\cov(\Ewf)$};
\node (none) at (5.5,10.25) {$\non(\Ewf)$};
\node (c) at (11.5,13) {$\cfrak$};

\node (mineubdb) at (.2, 3){$\min\{\efrak^\const_\ubd,\bfrak\}$};
\node (econst) at (.5, 4.7) {$\efrak^\const$};
\node (e) at (-1.2, 4.7) {$\efrak^\forall$};

\node (econstubd) at (.5, 5.7) {$\efrak^\const_\ubd$};
\node (econstb) at (.5, 6.7) {$\efrak^\const_b$};
\node (econst2) at (0, 7.7) {$\efrak^\const_{2,\ne}=\efrak^\const_2$};
\node (econstKneq) at (.5, 10.8) {$\efrak^\const_{K,\neq}$};
\node (econsthypneq) at (.5, 11.8) {$\efrak^\const_{\hyp,\ne}$};
\node (ebconstneq) at (.5, 8.9) {$\efrak^\const_{b,\ne}$};
\node (econstubdneq) at (0.5,9.8) {$\efrak^\const_{\ubd,\neq}$};


\node (eubd) at (-1.2, 5.7) {$\efrak_\ubd^\forall$};
\node (eb) at (-1.2, 6.7) {$\efrak_b^\forall$};
\node (ebneq) at (-1.2, 8.9) {$\efrak_{b,\ne}^\forall$};
\node (eKneq) at (-1.2, 10.8) {$\efrak_{K,\neq}^\forall$};
\node (eubdneq) at (-1.2,9.8) {$\efrak_{\ubd,\neq}^\forall$};
\node (ehypneq) at (-1.2, 11.8) {$\efrak_{\hyp,\ne}^\forall$};

\node (vhypneq) at (12.25, 3.5) {$\vfrak_{\hyp,\ne}^\forall$};
\node (vKneq) at (12.25, 4.5) {$\vfrak_{K,\ne}^\forall$};
\node (vubdneq) at (12.25, 5.5) {$\vfrak_{\ubd,\ne}^\forall$};
\node (vbneq) at (12.25, 6.5) {$\vfrak_{b,\ne}^\forall$};
\node (vb) at (12.25, 9) {$\vfrak_{b}^\forall$};
\node (vubd) at (12.25,10) {$\vfrak_{\ubd}^\forall$};
\node (v) at (12.25, 11) {$\vfrak^\forall$};

\node (maxeubdb) at (11,12){$\max\{\vfrak^\const_\ubd,\dfrak\}$};
\node (vconst) at (10.5,11) {$\vfrak^\const$};
\node (vconstubd) at (10.5,10) {$\vfrak^\const_\ubd$};
\node (vconstb) at (10.5,9) {$\vfrak^\const_b$};
\node (vconst2) at (11.3,7.7) {$\vfrak^\const_2=\vfrak_{2}^\forall$};
\node (vconstKneq) at (10.5, 4.5) {$\vfrak^\const_{K,\neq}$};
\node (vconstubdneq) at (10.5,5.5) {$\vfrak^\const_{\ubd,\neq}$};
\node (vconsthypneq) at (10.5, 3.5) {$\vfrak^\const_{\hyp,\ne}$};
\node (vconstbneq) at (10.5, 6.5) {$\vfrak^\const_{b,\ne}$};

\draw (vubdneq) edge[->] (vbneq);
\draw (vKneq) edge[->] (vubdneq);
\draw (vhypneq) edge[->] (vKneq);
\draw (vbneq) edge[->] (vconst2);
\draw (vconst2) edge[->] (vb);
\draw (vb) edge[->] (vubd);
\draw (vubd) edge[->] (v);

\draw (vconstubdneq) edge[->] (vconstbneq);
\draw (vconstKneq) edge[->] (vconstubdneq);
\draw (vconsthypneq) edge[->] (vconstKneq);
\draw (vconstbneq) edge[->] (vconst2);

\draw (vconsthypneq) edge[->] (vhypneq);
\draw (vconstKneq) edge[->] (vKneq);
\draw (vconstubdneq) edge[->] (vubdneq);
\draw (vconstbneq) edge[->] (vbneq);
\draw (vconstb) edge[->] (vb);
\draw (vconstubd) edge[->] (vubd);
\draw (vconst) edge[->] (v);

\draw (eubdneq) edge[->] (econstubdneq);
\draw (eKneq) edge[->] (econstKneq);
\draw (eb) edge[->] (econstb);
\draw (eubd) edge[->] (econstubd);
\draw (e) edge[->] (econst);
\draw (ehypneq) edge[->] (econsthypneq);
\draw (ebneq) edge[->] (ebconstneq);

\draw (aleph1) edge[->] (addn);
\draw(addn) edge[->] (covn)
      (covn) edge [->] (nonm)
      (covm) edge [->] (nonn)
      (nonm)edge [->] (cfm)
      (cfm)edge [->] (cfn)
      (cfn) edge[->] (c);

\draw
   (addn) edge [->]  (addm)
   (addm) edge [->]  (covm)
   (nonn) edge [->]  (cfn);
\draw (addm) edge [->] (b)
      (b)  edge [->] (econstlq);

\draw     (d)  edge[->] (cfm);
\draw (b) edge [->] (d);

\draw (none) edge[line width=.15cm,white,-]  (nonn);
\draw (none) edge [->]  (nonn);
\draw (none) edge [->]  (nonm);
\draw (cove) edge [<-]  (covm);

\draw  (cove) edge[line width=.15cm,white,-] (covn);
\draw  (cove) edge [<-]  (covn);


\draw  (addm) edge[line width=.15cm,white,-] (none);
\draw  (addm) edge [->]  (none);

\draw  (mineubdb) edge[line width=.15cm,white,-] (b);
\draw  (mineubdb) edge [->]  (b);

\draw  (mineubdb) edge[line width=.15cm,white,-](econst);
\draw  (mineubdb) edge [->]  (econst);

\draw  (econst2) edge[line width=.15cm,white,-](vconst2);
\draw  (econst2) edge [->] (vconst2);

\draw  (econst) edge[line width=.15cm,white,-](econstlq);
\draw  (econst) edge [->] (econstlq);

\draw  (vconst) edge[line width=.15cm,white,-](vconstleq);
\draw  (vconst) edge [<-] (vconstleq);

\draw  (d) edge[line width=.15cm,white,-](maxeubdb);
\draw  (d) edge [->] (maxeubdb);

\draw(econst) edge[->] (econstubd);
\draw(econstubd) edge[->] (econstb);
\draw(econstb) edge[->] (econst2);
\draw(econst2) edge[->] (ebconstneq);
\draw(econstKneq) edge[<-] (econstubdneq);
\draw(ebconstneq) edge[->] (econstubdneq);
\draw(econstKneq) edge[->] (econsthypneq);

\draw(e) edge[->] (eubd);
\draw(eubd) edge[->] (eb);
\draw(eb) edge[->] (econst2);
\draw(econst2) edge[->] (ebneq);
\draw(eKneq) edge[<-] (eubdneq);
\draw(ebneq) edge[->] (eubdneq);
\draw(eKneq) edge[->] (ehypneq);


\draw(vconst) edge[->] (maxeubdb);
\draw(vconst) edge[<-] (vconstubd);
\draw(vconst) edge[<-] (vconstubd);
\draw(vconstubd) edge[<-] (vconstb);
\draw(vconstb) edge[<-] (vconst2);


\draw  (vconstleq) edge[line width=.15cm,white,-](d);
\draw  (vconstleq) edge [->] (d);

\draw  (covm) edge[line width=.15cm,white,-](vconstleq);
\draw  (covm) edge [->] (vconstleq);

\draw  (econsthypneq) edge[line width=.15cm,white,-](none);
\draw  (econsthypneq) edge [->] (none);

\draw  (vconsthypneq) edge[line width=.15cm,white,-](cove);
\draw  (vconsthypneq) edge [<-] (cove);

\draw  (vconst2) edge[line width=.15cm,white,-](nonm);
\draw  (vconst2) edge [<-] (nonm);

\draw  (econstlq) edge[line width=.15cm,white,-](nonm);
\draw  (econstlq) edge [->] (nonm);

\draw (aleph1) edge[->] (mineubdb);

\draw (maxeubdb) edge[->] (c);

}
\end{tikzpicture}
\caption{Cicho\'n's diagram along with the cardinal invariants introduced so far.}
\label{cichonext}
\end{figure}

\section{Consistency results}\label{sec:consresults}

In the following, let us present some known consistent results regarding~\autoref {cichonext}. 
\begin{enumerate}[label=\rm(C\arabic*)]
\item\label{c:mds} Assume $\nu$ and $\lambda$ cardinals such that $\aleph_1\leq\nu\leq\lambda=\lambda^{<\nu}$. Brendle~\cite{BreIII} constructed a FS iteration to force $\bfrak=\nu<\efrak^\const=\lambda$.

\item Kamo~\cite{kamopred} built a CS iteration to force $\vfrak^\const=\aleph_1<\dfrak=\aleph_2$. 

\item With $\nu$ and $\lambda$ as in~\ref{c:mds}, Brendle performed a FS iteration to force $\efrak^\const_\leq=\nu<\efrak^\const_2=\lambda$.

\item With $\nu$ and $\lambda$ as in~\ref{c:mds}, Brendle and Shelah~\cite{BreShevaIV} built a FS iteration of amoeba forcing of $\nu\lambda$ to force $\efrak_2^\const=\nu<\add(\Nwf)=\lambda$.

\item Kamo~\cite{kamoeva} built a CS iteration to force $\cof(\Nwf)=\aleph_1<\vfrak^\const_2=\aleph_2$.

\item\label{c1:mds} Assume $\nu$ and $\lambda$ cardinals such that $\aleph_1\leq\nu\leq\lambda=\lambda^{\aleph_0}$. The FS iteration of length $\lambda\nu$ of random forcing
forces $\non(\Ewf)=\bfrak=\aleph_1\leq\cov(\Nwf)=\non(\Nwf)=\nu\leq\cov(\Ewf)=\cfrak=\lambda$ (see e.g~\cite[Thm.~5.4]{Car4E}). Then by~\autoref{b2}~\ref{b2:a}, we get $\efrak^\const_{\hyp,\neq}=\aleph_1$ and $\vfrak^\const_{\hyp,\neq}=\lambda$.

\item With $\nu$ and $\lambda$ as in~\ref{c1:mds}. The FS iteration of length $\lambda\nu$ of Hechler forcing forces $\cov(\Nwf)=\aleph_1\leq\bfrak=\nu\leq\cfrak=\lambda$. Thanks to~\autoref{c11}, we get $\efrak_2^\const=\aleph_1$. 
\end{enumerate}

While we have exhibited some consistent results related to our newly defined cardinal invariants, which are, in fact, new; we will follow this trend and provide some new consistent results about them.

We start by recalling the preservation properties introduced by Judah and Shelah~\cite{JS} and Brendle~\cite{Br} for FS iterations of ccc posets, which were extended in~\cite[Sect.~4]{CM} by the first author and Mej\'ia, which we will use it.

\begin{definition}\label{b8}
Let $\Rbf=\la X,Y,{\sqsubset}\ra$ be a relational system, let $\theta$ be a cardinal and let $M$ be a~set

\begin{enumerate}[label=\rm(\alph*)]
\item An object $y\in Y$ is \textit{$\Rbf$-dominating over $M$} if $x\sqsubset y$ for all $x\in X\cap M$.

\item An object $x\in X$ is \textit{$\Rbf$-unbounded over $M$} if it $\Rbf^\perp$-dominating over $M$, that is, $x\not\sqsubset y$ for all $y\in Y\cap M$.
\end{enumerate}

\end{definition}

By establishing a Tukey connection between their relational systems and certain simple relational systems, such as $\Cv_{[\lambda]^{<\theta}}$ and $[\lambda]^{<\theta}$ for cardinals $\theta\leq\lambda$ with uncountable regular~$\theta$, we demonstrate in our forcing constructions that specific cardinal invariants assume particular values within a generic extension

For instance, if $\Rbf$ is a relational system and we force $\Rbf\eqT\Cv_{[\lambda]^{<\theta}}$, then we obtain $\bfrak(\Rbf)=\non([\lambda]^{<\theta})=\theta$ and $\dfrak(\Rbf)=\cov([\lambda]^{<\theta})=\lambda$, the latter when either $\theta$ is regular or $\lambda>\theta$.
This discussion motivates the following characterizations of the Tukey order between $\Cv_{[X]^{<\theta}}$ and other relational systems.

\begin{lemma}[{\cite[Lem.~1.16]{CM22}}]\label{b12}
Let\/ $\Rbf=\la X,Y,{\sqsubset}\ra$ be a relational system, $\theta$ be an infinite cardinal, and $I$ be a set of size ${\geq}\theta$. Then $\bfrak(\Rbf)\geq\theta$ iff\/ $\Rbf\leqT\Cv_{[X]^{<\theta}}$.
\end{lemma}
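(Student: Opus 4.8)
The plan is to prove both implications directly from the definitions, the only inputs being that $\Cv_{[X]^{<\theta}}$ denotes the relational system $\la X,[X]^{<\theta},{\in}\ra$ (so that $\bfrak(\Cv_{[X]^{<\theta}})=\non([X]^{<\theta})$) and the defining property of $\bfrak(\Rbf)$: every subset of $X$ of size $<\bfrak(\Rbf)$ is $\Rbf$-bounded.

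For the implication ``$\Rbf\leqT\Cv_{[X]^{<\theta}}\Rightarrow\bfrak(\Rbf)\geq\theta$'' I would simply invoke the general monotonicity $\Rbf\leqT\Rbf'\Rightarrow\bfrak(\Rbf')\leq\bfrak(\Rbf)$ recalled in the introduction, together with the observation that $\bfrak(\Cv_{[X]^{<\theta}})=\non([X]^{<\theta})\geq\theta$: any $A\subseteq X$ with $|A|<\theta$ lies in the ideal $[X]^{<\theta}$, so an $\in$-unbounded subset of $X$ must have size at least $\theta$. (When $|X|\geq\theta$ one in fact has equality $\non([X]^{<\theta})=\theta$, but only the inequality is needed here.)

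For the converse, assume $\bfrak(\Rbf)\geq\theta$ and construct a Tukey connection $(\Psi_-,\Psi_+)$ from $\Rbf$ into $\Cv_{[X]^{<\theta}}$. Take $\Psi_-:=\id_X\colon X\to X$. For each $A\in[X]^{<\theta}$ we have $|A|<\theta\leq\bfrak(\Rbf)$, so $A$ is $\Rbf$-bounded; using the axiom of choice, fix $y_A\in Y$ with $x\sqsubset y_A$ for all $x\in A$, and set $\Psi_+(A):=y_A$. Then for every $x\in X$ and $A\in[X]^{<\theta}$, the hypothesis $\Psi_-(x)=x\in A$ implies $x\sqsubset y_A=\Psi_+(A)$, which is exactly the Tukey-connection condition; hence $\Rbf\leqT\Cv_{[X]^{<\theta}}$.

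I do not expect a genuine obstacle here — the argument is a short unwinding of definitions. The only points that require a little care are: recognising that ``size $<\bfrak(\Rbf)$ implies bounded'' is precisely the contrapositive of the definition of $\bfrak$; the (harmless) use of choice to pick the bounds $y_A$ uniformly; and, if one prefers the formulation with an arbitrary index set $I$ of size $\geq\theta$ in place of $X$ as in the hypotheses, composing $\Psi_-$ with an injection $X\hookrightarrow I$ and defining $\Psi_+$ on $[I]^{<\theta}$ through $\Psi_-$-preimages, which works since preimages under an injection of sets of size $<\theta$ again have size $<\theta$.
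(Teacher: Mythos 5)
Your proof is correct: both directions are exactly the standard unwinding of the definitions (monotonicity of $\bfrak$ under $\leqT$ plus $\non([X]^{<\theta})\geq\theta$ for one direction, and $\Psi_-=\id_X$ with chosen bounds $y_A$ for each $A\in[X]^{<\theta}$ for the other), and your remark about replacing $X$ by an arbitrary index set $I$ via an injection is a sensible reading of the otherwise unused hypothesis on $I$. The paper itself gives no proof, citing \cite[Lem.~1.16]{CM22}, and your argument coincides with the proof given there, so there is nothing to object to.
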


We examine the types of relational systems that are well-defined.

\begin{definition}\label{b11}
Say that $\Rbf=\la X,Y,{\sqsubset}\ra$ is a \textit{Polish relational system (Prs)} if
\begin{enumerate}[label=\rm(\arabic*)]
\item\label{b11:1}
$X$ is a Perfect Polish space,
\item\label{b11:2}
$Y$ is a non-empty analytic subspace of some Polish $Z$, and
\item\label{b11:3}
${\sqsubset}=\bigcup_{n<\omega}{\sqsubset_{n}}$ where $\seq{{\sqsubset_{n}}}{n\in\omega}$ is some increasing sequence of closed subsets of $X\times Z$ such that, for any $n<\omega$ and for any $y\in Y$,
$({\sqsubset_{n}})^{y}=\set{x\in X}{x\sqsubset_{n}y}$ is closed nowhere dense.
\end{enumerate}
\end{definition}

\begin{remark}\label{b4}
By~\autoref{b11}~\ref{b11:3}, $\la X,\Mwf(X),{\in}\ra$ is Tukey below $\Rbf$ where $\Mwf(X)$ denotes the $\sigma$-ideal of meager subsets of $X$. Therefore, $\bfrak(\Rbf)\leq \non(\Mwf)$ and $\cov(\Mwf)\leq\dfrak(\Rbf)$.
\end{remark}

For the rest of this section, fix a Prs $\Rbf=\la X,Y,{\sqsubset}\ra$ and an infinite cardinal $\theta$.

\begin{definition}[Judah and Shelah {\cite{JS}}, Brendle~{\cite{Br}}]\label{c12}
A poset $\Por$ is \textit{$\theta$-$\Rbf$-good} if, for any $\Por$-name $\dot{h}$ for a member of $Y$, there is a nonempty set $H\subseteq Y$ (in the ground model) of size ${<}\theta$ such that, for any $x\in X$, if $x$ is $\Rbf$-unbounded over $H$ then $\Vdash x\not\sqsubset \dot{h}$.

We say that $\Por$ is \textit{$\Rbf$-good} if it is $\aleph_1$-$\Rbf$-good.
\end{definition}

The previous is a standard property associated with preserving $\bfrak(\Rbf)$ small and $\dfrak(\Rbf)$ large after forcing extensions.

\begin{remark}
Notice that $\theta<\theta_0$
implies that any $\theta$-$\Rbf$-good poset is $\theta_0$-$\Rbf$-good. Also, if $\Por \lessdot\Qor$ and $\Qor$ is $\theta$-$\Rbf$-good, then $\Por$ is $\theta$-$\Rbf$-good.
\end{remark}

\begin{lemma}[{\cite[Lemma~2.7]{CM}}]\label{b6}
Assume that $\theta$ is a regular cardinal. Then any poset of size ${<}\theta$
is $\theta$-$\Rbf$-good. In particular, Cohen forcing\/ $\Cor$ is\/ $\Rbf$-good.
\end{lemma}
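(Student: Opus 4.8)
The plan is the standard ``closed envelope of a name'' argument. Given a $\Por$-name $\dot h$ for a member of $Y$, for each condition $p$ we read off a single ground-model element $h_p\in Y$ lying in the $\subseteq$-least closed subset of the ambient Polish space that $p$ forces to contain $\dot h$; the witnessing set is then $H:=\set{h_p}{p\in\Por}$, which automatically has size $\le|\Por|<\theta$. The real work splits into (i) actually producing $h_p$ inside $Y$ — where analyticity of $Y$ is used — and (ii) checking, via the decomposition $\sqsubset=\bigcup_{n<\omega}\sqsubset_n$ from \autoref{b11}, that $H$ witnesses $\theta$-$\Rbf$-goodness.

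For (i): recall $Y$ is analytic in a Polish space $Z$ and each $\sqsubset_n$ is closed in $X\times Z$. Fixing a countable base for $Z$, for $p\in\Por$ let
\[
C_p:=Z\smallsetminus\textstyle\bigcup\{U: U\text{ basic open in }Z,\ p\Vdash\dot h\notin U\}.
\]
Since $Z$ is second countable, $C_p$ is closed, $p\Vdash\dot h\in C_p$, and a quick check shows $C_p$ is the $\subseteq$-least closed $C\subseteq Z$ with $p\Vdash\dot h\in C$. Now $p\Vdash\dot h\in C_p\cap Y$, and ``$C_p\cap Y\ne\emptyset$'' is a $\Sigma^1_1$ assertion about ground-model reals (a code of the closed set $C_p$ together with an analytic code of $Y$); since it holds in the extension, $\Sigma^1_1$-absoluteness gives $C_p\cap Y\ne\emptyset$ already in $V$. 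Pick $h_p\in C_p\cap Y$ and set $H:=\set{h_p}{p\in\Por}$, so $\emptyset\ne H\subseteq Y$ and $|H|\le|\Por|<\theta$.

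For (ii): let $x\in X$ be $\Rbf$-unbounded over $H$, i.e.\ $x\not\sqsubset h_p$ for all $p\in\Por$, and suppose towards a contradiction that some $q\in\Por$ forces $x\sqsubset\dot h$. As $\sqsubset=\bigcup_n\sqsubset_n$ is an increasing union, a density argument gives $q'\le q$ and $n\in\omega$ with $q'\Vdash x\sqsubset_n\dot h$. The slice $(\sqsubset_n)_x:=\set{z\in Z}{x\sqsubset_n z}$ is closed in $Z$ (it is the preimage of the closed set $\sqsubset_n$ under $z\mapsto(x,z)$), and $q'\Vdash\dot h\in(\sqsubset_n)_x$; by minimality of $C_{q'}$ this forces $C_{q'}\subseteq(\sqsubset_n)_x$, hence $h_{q'}\in(\sqsubset_n)_x$, i.e.\ $x\sqsubset_n h_{q'}$ and so $x\sqsubset h_{q'}$, contradicting the choice of $x$. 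Therefore $\Vdash x\not\sqsubset\dot h$, and $\Por$ is $\theta$-$\Rbf$-good. For the ``in particular'': Cohen forcing $\Cor$ is countable, hence of size $<\aleph_1$, and $\aleph_1$ is regular, so $\Cor$ is $\aleph_1$-$\Rbf$-good, i.e.\ $\Rbf$-good.

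The one step that needs genuine care is the transition from ``$p$ forces $\dot h$ into the ground-model closed set $C_p\cap Y$'' to ``$C_p\cap Y$ is nonempty in $V$'': this is exactly where the hypothesis that $Y$ is analytic, together with $\Sigma^1_1$-absoluteness (Mostowski/Shoenfield), is essential. The remaining ingredients — the least closed set containing a name, its basic properties, and the decomposition $\sqsubset=\bigcup_n\sqsubset_n$ — are routine bookkeeping.
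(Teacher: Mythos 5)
Your argument is correct, and there is nothing in the paper to compare it against line by line: the paper states \autoref{b6} as a quotation of {\cite[Lemma~2.7]{CM}} and gives no proof, and your ``least closed superset of the name plus $\mathbf{\Sigma}^1_1$-absoluteness'' argument is essentially the standard one behind that cited lemma. All the key steps check out: $C_p$ is indeed the least ground-model closed set that $p$ forces $\dot h$ into, analyticity of $Y$ together with Mostowski/Shoenfield absoluteness yields $h_p\in C_p\cap Y$ in $V$, and the section $(\sqsubset_n)_x$ being closed makes the contradiction in step (ii) go through, giving $H=\set{h_p}{p\in\Por}$ of size $\leq|\Por|<\theta$. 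As a side remark, your proof never uses regularity of $\theta$ (one interpretation point per condition suffices), so you in fact prove a slightly stronger statement than the one quoted.
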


FS iterations add Cohen reals, sometimes considered a limitation of the method. 

Let $\Mbf$ denote the Polish relational system from \cite[Example~3.8(6)]{CMR2}
such that $\Mbf\eqT\Cbf_\Mwf$.

\begin{lemma}[{\cite[Cor.~2.7]{CM22}}]\label{Cohenlimit}
Let $\pi$ be a limit ordinal of uncountable cofinality and let\/
$\Por_\pi=\seq{\Por_\alpha,\Qnm_\alpha}{\alpha<\pi}$ be a FS iteration of non-trivial posets.
If\/ $\Por_\pi$ is\/ $\cf(\pi)$-cc then it forces $\pi\leqT\Mbf$.
In particular, $\Por_\pi$ forces $\non(\Mwf)\leq\cf(\pi)\leq\cov(\Mwf)$. 
\end{lemma}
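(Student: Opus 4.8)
The plan is to prove that $\Por_\pi$ forces a Tukey connection $\pi=\la\pi,\pi,{\le}\ra\leqT\Cbf_\Mwf$; since $\Mbf\eqT\Cbf_\Mwf$ and $\leqT$ is transitive, this already gives $\pi\leqT\Mbf$. The ``in particular'' clause then follows at once, because $\bfrak(\pi)=\dfrak(\pi)=\cf(\pi)$ while $\bfrak(\Cbf_\Mwf)=\non(\Mwf)$ and $\dfrak(\Cbf_\Mwf)=\cov(\Mwf)$, so $\pi\leqT\Mbf$ translates into $\non(\Mwf)\le\cf(\pi)\le\cov(\Mwf)$.

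So work in $V^{\Por_\pi}$ and construct a pair $(\Psi_-,\Psi_+)$ with $\Psi_-\colon\pi\to\cantor$ and $\Psi_+\colon\Mwf(\cantor)\to\pi$. For $\Psi_-$ the main input is the standard fact that a finite support iteration of $\omega$-many non-trivial posets adds a Cohen real over the ground model (see, e.g., \cite{Br} or \cite{BJ}); this is the step where non-triviality of the iterands is used. Since $\pi$ is a limit ordinal we have $\alpha+\omega<\pi$ for every $\alpha<\pi$, and in $V^{\Por_\alpha}$ the remainder $\Por_{\alpha+\omega}/\Por_\alpha$ is a finite support iteration of the non-trivial posets $\seq{\Qnm_{\alpha+n}}{n<\omega}$; hence it adds a real $c_\alpha\in\cantor$ that is Cohen over $V^{\Por_\alpha}$, and $c_\alpha\in V^{\Por_{\alpha+\omega}}\subseteq V^{\Por_\pi}$ because $\Por_{\alpha+\omega}\lessdot\Por_\pi$. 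Set $\Psi_-(\alpha)=c_\alpha$. For $\Psi_+$, use that $\Por_\pi$ is $\cf(\pi)$-cc with $\cf(\pi)>\aleph_0$: then every real of $V^{\Por_\pi}$ already belongs to $V^{\Por_\alpha}$ for some $\alpha<\pi$, since a nice name for a real in $\cantor$ involves countably many antichains, each of size $<\cf(\pi)$, so the union of the (finite) supports of the conditions occurring in it has size $<\cf(\pi)$ and hence is bounded below $\pi$. Therefore, for every meager $M\subseteq\cantor$ we may fix $\alpha_M<\pi$ with some Borel code for $M$ already in $V^{\Por_{\alpha_M}}$, and set $\Psi_+(M)=\alpha_M$.

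It remains to check the Tukey implication: if $\alpha<\pi$, $M\subseteq\cantor$ is meager and $\Psi_-(\alpha)=c_\alpha\in M$, then $\alpha\le\alpha_M=\Psi_+(M)$. If this failed, we would have $\alpha_M<\alpha$, so $V^{\Por_{\alpha_M}}\subseteq V^{\Por_\alpha}$ and the chosen code for $M$ would lie in $V^{\Por_\alpha}$; but $c_\alpha$ is Cohen over $V^{\Por_\alpha}$, hence avoids every meager Borel set whose code lies in $V^{\Por_\alpha}$, so $c_\alpha\notin M$, a contradiction. Thus $(\Psi_-,\Psi_+)$ witnesses $\pi\leqT\Cbf_\Mwf$. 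Finally, $\cf(\pi)$ is a regular cardinal and $\bfrak(\pi)=\dfrak(\pi)=\cf(\pi)$, so the conclusion reads as the stated chain $\non(\Mwf)\le\cf(\pi)\le\cov(\Mwf)$ of inequalities between cardinals.

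The one genuinely non-routine ingredient is the Cohen-real fact invoked for $\Psi_-$: that cofinally in $\pi$ the finite support iteration produces reals generic over the corresponding intermediate models. Everything else is bookkeeping — the reflection of Borel codes below $\pi$ (which is exactly where both hypotheses, $\cf(\pi)$-cc and uncountable $\cf(\pi)$, enter), the one-line verification of the Tukey connection, and the passage to cardinal invariants via the general properties of $\leqT$.
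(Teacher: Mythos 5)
Your argument is correct and is essentially the standard proof of the cited result \cite[Cor.~2.7]{CM22}, which the paper quotes without proof: cofinally many Cohen reals $c_\alpha$ over the intermediate models $V^{\Por_\alpha}$ (using non-triviality of the iterands at limit stages of countable cofinality), together with the reflection of nice names for reals below $\pi$ (using the $\cf(\pi)$-cc and the uncountable, regular cofinality), give the Tukey connection $\pi\leqT\Cbf_{\Mwf}\eqT\Mbf$, and the cardinal inequalities follow from $\bfrak(\pi)=\dfrak(\pi)=\cf(\pi)$. One cosmetic point: an arbitrary $M\in\Mwf$ need not be Borel, so $\Psi_+$ should send $M$ to a stage capturing a code of a meager $F_\sigma$ superset of $M$ (equivalently, work directly with the coded Polish relational system $\Mbf$); the verification of the Tukey implication is unchanged.
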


The impact of adding cofinally many $\Rbf$-dominating reals along an FS iteration is illustrated by the next result

\begin{lemma}[{\cite[Lemma~2.9]{CM}}]\label{thm:fullgen}
Let\/ $\Rbf$ be a definable relational system of the reals, and let $\nu$ be a limit ordinal of uncountable cofinality.
If\/ $\Por_\nu=\seq{\Por_\xi,\Qnm_\xi}{\xi<\nu}$ is a FS iteration of $\cf(\nu)$-cc posets that adds\/ $\Rbf$-dominating reals cofinally often, then\/ $\Por_\nu$ forces\/ $\Rbf\leqT\nu$. 

In addition, if\/ $\Rbf$ is a Prs and all iterands are non-trivial, then\/ $\Por_\nu$ forces\/ $\Rbf\eqT\Mbf\eqT\nu$.
In particular, $\Por_\nu$ forces\/ $\bfrak(\Rbf)=\dfrak(\Rbf)=\non(\Mwf)=\cov(\Mwf) =\cf(\nu)$.
\end{lemma}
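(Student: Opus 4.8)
The plan is to produce, inside the extension $V^{\Por_\nu}$, an explicit Tukey connection witnessing $\Rbf\leqT\nu$ --- here $\nu$ is read as the relational system $\la\nu,\nu,{\le}\ra$, which is Tukey equivalent to $\cf(\nu)$ --- and then, under the extra hypotheses of the second part, to sandwich $\Rbf$ between $\nu$ and $\Mbf$ via \autoref{b4} and \autoref{Cohenlimit}. So the skeleton is: (i) $\Rbf\leqT\nu$ in every case; (ii) $\Mbf\leqT\Rbf$ when $\Rbf$ is a Prs, which is just \autoref{b4}; (iii) $\nu\leqT\Mbf$ when all iterands are non-trivial, which is \autoref{Cohenlimit}; and then chaining (i)--(iii) yields $\Rbf\eqT\nu\eqT\Mbf$, from which the cardinal equalities are read off.

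For step (i) I would first recall that $\Por_\nu$ is $\cf(\nu)$-cc, being a finite-support iteration of $\cf(\nu)$-cc posets with $\cf(\nu)$ regular uncountable. The key observation is then that every real of $V^{\Por_\nu}$ --- in particular every member of $X$, since $\Rbf$ is a relational system of the reals --- already lies in $V^{\Por_\xi}$ for some $\xi<\nu$: a name for a real is decided by countably many maximal antichains, each of size $<\cf(\nu)$ and hence with supports whose union has size $<\cf(\nu)$, and any subset of $\nu$ of size $<\cf(\nu)$ is bounded in $\nu$. Working in $V^{\Por_\nu}$, I would set $\Psi_-(x)$ to be the least $\xi<\nu$ with $x\in V^{\Por_\xi}$; and, using that $\Rbf$-dominating reals are added cofinally often, for each $\alpha<\nu$ I would pick $\eta(\alpha)\ge\alpha$ such that $\Qnm_{\eta(\alpha)}$ adds over $V^{\Por_{\eta(\alpha)}}$ a real $d_{\eta(\alpha)}\in Y$ that is $\Rbf$-dominating over $V^{\Por_{\eta(\alpha)}}$ in the sense of \autoref{b8}, and put $\Psi_+(\alpha)=d_{\eta(\alpha)}$. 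If $\Psi_-(x)=\xi\le\alpha$ then $x\in X\cap V^{\Por_\xi}\subseteq X\cap V^{\Por_{\eta(\alpha)}}$, so $x\sqsubset d_{\eta(\alpha)}=\Psi_+(\alpha)$, using absoluteness of $X$ together with (upward) absoluteness of $\sqsubset$ between the models --- this is where definability of $\Rbf$ is used. Hence $(\Psi_-,\Psi_+)$ is a Tukey connection and $\Por_\nu$ forces $\Rbf\leqT\nu$.

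For step (ii), assuming $\Rbf$ is a Prs, \autoref{b4} gives $\la X,\Mwf(X),{\in}\ra\leqT\Rbf$ in $V^{\Por_\nu}$; and since $X$ is a perfect Polish space, $\Mbf\eqT\Cbf_\Mwf\eqT\la X,\Mwf(X),{\in}\ra$, so $\Mbf\leqT\Rbf$ there. For step (iii), $\Por_\nu$ is a $\cf(\nu)$-cc finite-support iteration of non-trivial posets and $\nu$ has uncountable cofinality, so \autoref{Cohenlimit} yields $\nu\leqT\Mbf$ in $V^{\Por_\nu}$. Chaining the three, $\Rbf\leqT\nu\leqT\Mbf\leqT\Rbf$, whence $\Rbf\eqT\nu\eqT\Mbf$. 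The ``in particular'' clause then drops out: $\Rbf\eqT\nu$ forces $\bfrak(\Rbf)=\bfrak(\nu)=\cf(\nu)=\dfrak(\nu)=\dfrak(\Rbf)$, and $\Rbf\eqT\Mbf$ identifies these with $\non(\Mwf)=\bfrak(\Mbf)$ and $\cov(\Mwf)=\dfrak(\Mbf)$ respectively.

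I expect the crux to be the boundedness argument in step (i) --- that every element of $X$ shows up at some stage $\xi<\nu$ --- hand in hand with pinning down exactly what ``definable relational system of the reals'' must guarantee: one needs $X$ (and $Y$) and the relation $\sqsubset$ to be absolute enough between the intermediate models $V^{\Por_\xi}$ that ``$d$ is $\Rbf$-dominating over $V^{\Por_\xi}$'' is preserved through the rest of the iteration. For a Prs this is automatic, since $X$ is a coded Polish space and $\sqsubset=\bigcup_n{\sqsubset_n}$ with each $\sqsubset_n$ closed, so $\sqsubset$ is $F_\sigma$ and hence absolute; everything else is routine bookkeeping with the cited results.
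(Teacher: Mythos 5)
Your proposal is correct, and it is essentially the standard argument for this result: the paper itself gives no proof here but quotes it as \cite[Lemma~2.9]{CM}, where the proof runs exactly along your lines --- every real appears at some stage $\xi<\nu$ by the $\cf(\nu)$-cc of the finite-support iteration, the cofinally added dominating reals give $\Rbf\leqT\nu$ (using definability/absoluteness of $\sqsubset$), and then \autoref{b4} and \autoref{Cohenlimit} close the cycle $\Rbf\leqT\nu\leqT\Mbf\leqT\Rbf$. The cardinal equalities follow as you state, so nothing further is needed.
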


As FS iterations along, good posets are preserved in the following way.

\begin{theorem}[{\cite[Thm.~4.11]{BCM2}}]\label{Comgood}
Let $\theta\geq\aleph_1$ be a regular cardinal and let\/ $\seq{\Por_\xi,\Qnm_\xi}{\xi<\pi}$ be a finite support iteration such that\/  $\Por_\xi$ forces that\/ $\Qnm_\xi$ is a non-trivial $\theta$-cc $\theta$-$\Rbf$-good poset for $\xi<\pi$.  Then\/ $\Por_\pi$ is $\theta$-$\Rbf$-good. Moreover, 
if $\pi\geq\theta$ then\/ $\Por_\pi$ forces\/ $\Cv_{[\pi]^{<\theta}}\leqT \Rbf$, which implies\/ $\bfrak(\Rbf)\leq\theta$ and\/ $|\pi|\leq\dfrak(\Rbf)$. 
\end{theorem}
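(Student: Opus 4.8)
The plan is to prove the two assertions separately: first that $\theta$-$\Rbf$-goodness is preserved along the iteration, and then, granted $\pi\geq\theta$, to produce in $V^{\Por_\pi}$ a Tukey connection $\Cv_{[\pi]^{<\theta}}\leqT\Rbf$. The stated bounds then follow at once, since $\bfrak(\Cv_{[\pi]^{<\theta}})=\non([\pi]^{<\theta})=\theta$ and $\dfrak(\Cv_{[\pi]^{<\theta}})=\cov([\pi]^{<\theta})\geq|\pi|$ (with equality when $\theta\leq|\pi|$ and $\theta$ is regular). Throughout I would use, without further comment, that $\Por_\pi$ is $\theta$-cc, which is standard for a finite support iteration of $\theta$-cc posets, and in particular that $\theta$ remains a regular cardinal in every intermediate model.

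\emph{Preservation.} I would argue by induction on $\pi$, the substance being a composition lemma for the successor step and a reflection argument at limits. For composition, suppose $\Por$ is $\theta$-$\Rbf$-good and $\Por$ forces $\Qnm$ to be $\theta$-$\Rbf$-good, and let $\dot h$ be a $\Por\ast\Qnm$-name for a member of $Y$. Working inside $V^\Por$, goodness of $\Qnm$ yields a $\Por$-name $\dot H$ for a subset of $Y$ of size $<\theta$ witnessing goodness for $\dot h$; fixing (using $\theta$-cc and regularity of $\theta$) an ordinal $\mu<\theta$ and a name-enumeration $\dot H=\seq{\dot y_j}{j<\mu}$ (padding with repetitions if necessary), I would apply goodness of $\Por$ to each $\dot y_j$ to obtain a ground-model set $H_j\subseteq Y$ with $|H_j|<\theta$, and set $H=\bigcup_{j<\mu}H_j$, again of size $<\theta$ because $\theta$ is regular. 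One then checks that if $x\in X$ is $\Rbf$-unbounded over $H$, then $\Por$ forces $x$ to be $\Rbf$-unbounded over $\dot H$, hence $\Por$ forces $\Qnm\Vdash x\not\sqsubset\dot h$, i.e.\ $\Por\ast\Qnm\Vdash x\not\sqsubset\dot h$. For a limit $\pi$: since $Y$ is an analytic subspace of a Polish space and $\Por_\pi$ is $\theta$-cc with finite supports, any name $\dot h$ for a member of $Y$ can be taken with support $S\in[\pi]^{<\theta}$. If $\cf(\pi)\geq\theta$, then $S$ is bounded below $\pi$, so $\dot h$ is (equivalent to) a $\Por_\alpha$-name for some $\alpha<\pi$ and the induction hypothesis applies. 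If $\cf(\pi)<\theta$, I would fix a cofinal sequence $\seq{\pi_i}{i<\cf(\pi)}$, extract from the part of $\dot h$ living in each $\Por_{\pi_i}$ a witnessing set $H_i$ of size $<\theta$ by the induction hypothesis, and take $H=\bigcup_i H_i$, of size $<\theta$ again by regularity of $\theta$.

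\emph{The Tukey connection.} Assume $\pi\geq\theta$ and work in $V^{\Por_\pi}$. Since the iterands are non-trivial, for each $\alpha<\pi$ the finite support iteration adds a Cohen real $c_\alpha\in X$ over $V^{\Por_\alpha}$ (identifying Cohen forcing on the perfect Polish space $X$ with the usual one); put $\Psi_-(\alpha)=c_\alpha$. Because $\Rbf$ is a Prs, \autoref{b4} gives $\la X,\Mwf(X),{\in}\ra\leqT\Rbf$, so for every $y\in Y$ the set $({\sqsubset})^y=\bigcup_{n<\omega}({\sqsubset_n})^y$ is meager; hence $c_\alpha\not\sqsubset y$ whenever $y$ has a Borel code in $V^{\Por_\alpha}$. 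For $y\in Y$ choose, using $\theta$-cc, a $\Por_\pi$-name $\dot y$ with support $\Psi_+(y)\in[\pi]^{<\theta}$. If $\alpha\notin\Psi_+(y)$, then the code of the meager set $({\sqsubset})^y$ belongs to the submodel of $V^{\Por_\pi}$ generated by the coordinates other than $\alpha$, over which $c_\alpha$ remains Cohen, so $c_\alpha\notin({\sqsubset})^y$, i.e.\ $\Psi_-(\alpha)\not\sqsubset y$. Thus $\{\alpha<\pi:\Psi_-(\alpha)\sqsubset y\}\subseteq\Psi_+(y)$, so $(\Psi_-,\Psi_+)$ is a Tukey connection from $\Cv_{[\pi]^{<\theta}}$ into $\Rbf$, and $\bfrak(\Rbf)\leq\theta$ and $|\pi|\leq\dfrak(\Rbf)$ follow.

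The main obstacles will be, in the composition lemma, the bookkeeping required to treat the internally produced witness $\dot H$ correctly as a $\Por$-name and to recombine the finitely-supported pieces via regularity of $\theta$; and, in the limit step, the cofinality case split driven by the $\theta$-cc support analysis. In the Tukey connection the one delicate input is the product-like behaviour of a single coordinate of a finite support iteration, namely that the Cohen real added at stage $\alpha$ stays generic over the remaining coordinates, which is exactly what licenses the implication $\alpha\notin\supp(\dot y)\Rightarrow c_\alpha\not\sqsubset y$.
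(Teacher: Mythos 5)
The paper does not prove this statement (it is quoted from {\cite[Thm.~4.11]{BCM2}}), so your proposal has to stand on its own. The first half, preservation of $\theta$-$\Rbf$-goodness, follows the standard route (two-step composition at successors, reflection of nice names at limits) and is essentially right, although the case $\cf(\pi)<\theta$ is only gestured at: ``the part of $\dot h$ living in $\Por_{\pi_i}$'' is not a defined object, since a $\Por_\pi$-name for a real need not restrict to $\Por_{\pi_i}$-names, and making this step precise is where the actual work of the limit case lies.

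The second half contains a genuine gap. Your Tukey connection rests on the claim that if $\alpha$ is outside the support of a nice name for $y$, then the Cohen real $c_\alpha$ ``remains Cohen over the submodel generated by the other coordinates,'' hence $c_\alpha\not\sqsubset y$. This is a mutual-genericity argument valid for \emph{products}, not for finite support \emph{iterations}: the iterands at stages $\xi>\alpha$ are $\Por_\xi$-names and may depend on the stage-$\alpha$ generic, so there is no submodel ``generated by the coordinates other than $\alpha$'' capturing $y$, and no genericity of $c_\alpha$ over it. A concrete failure: iterate Hechler forcing (non-trivial, ccc, but not good for $\Rbf=\la\baire,\baire,{\leq^*}\ra$, which is a Prs). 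The dominating real $y$ added at a stage $\beta\geq\theta$ has a nice name whose support is countable, yet $c_\alpha\leq^*y$ for \emph{every} $\alpha<\beta$; so the implication ``$\alpha\notin\supp(\dot y)\Rightarrow c_\alpha\not\sqsubset y$'' is false. Worse, your argument for $\Cv_{[\pi]^{<\theta}}\leqT\Rbf$ never uses goodness of the iterands at all, only non-triviality and $\theta$-cc, so it would ``prove'' that a long Hechler iteration forces $\bfrak=\bfrak(\Rbf)\leq\aleph_1$, which is false. The correct argument derives the smallness of $\set{\alpha}{c_\alpha\sqsubset y}$ precisely from $\theta$-goodness: one shows along the iteration (using goodness of the iterands, resp.\ of the tails) that the family of Cohen reals added at (cofinally many) stages stays \emph{strongly} $\theta$-$\Rbf$-unbounded, i.e.\ each $y\in Y$ of the final extension $\sqsubset$-bounds fewer than $\theta$ of them; the witness sets $H$ of size ${<}\theta$ provided by goodness, together with regularity of $\theta$, are what make this induction go through. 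Also a minor point: a single non-trivial $\theta$-cc iterand need not add any real (e.g.\ a Suslin tree), so the Cohen reals $c_\alpha$ should be taken from limit blocks of the iteration (say $\Por_{\alpha+\omega}$ over $V^{\Por_\alpha}$), not from each single step.
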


\subsection*{Forcing \texorpdfstring{$\efrak_\leq^\const$}{} small}\

Drawing inspiration from the proof of the~\cite[Crucial Lemma 4.1]{BreIII}, we present the following property:

\begin{definition}\label{c0} Let $\Por$ be a forcing notion. 
\begin{enumerate}[label=\rm(\arabic*)]
\item Let $A,B\subseteq\Por$.
We say that $B$~refines~$A$, if $\forall p\in B\bsp\exists q\in A\bsp p\leq q$.
Denote
$A^{\not\perp}=\set{p\in\Por}{\exists q\in A\bsp p\parallel q}$.

\item\label{c0:2} A~set $Q\subseteq\Por$ is said to be $\compact$-linked, if there is a~set
$Q'\subseteq\Por$
(we may require $Q'\subseteq Q^{\not\perp}$)
with the following two properties:
\begin{enumerate}
\item
$\exists A\in[Q']^{<\omega}\bsp Q\subseteq A^{\not\perp}$.
\item
If $A\in[Q']^{<\omega}$ and $Q\subseteq A^{\not\perp}$, then
for every open dense set $D\subseteq\Por$ there is $B\in[D\cap Q']^{<\omega}$
such that $B$~refines~$A$ and $Q\subseteq B^{\not\perp}$.
\end{enumerate}

\item For an infinite cardinal $\theta$, $\Por$ is \emph{$\theta$-$\compact$-linked} if $\Por=\bigcup_{\alpha<\theta}Q_\alpha$ for some $\compact$-linked $Q_\alpha$ ($\alpha<\theta$). When $\theta=\aleph_{0}$, we write $\sigma$-$\compact$-linked.
\end{enumerate}
\end{definition}

Below,  it is the first instance of a $\compact$-linked:

\begin{lemma}\label{c1}
Every finite set is\/ $\compact$-linked. In particular, 
Cohen forcing is $\sigma$-$\compact$-linked.
\end{lemma}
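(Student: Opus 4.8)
The plan is to verify the two clauses of the definition of $\compact$-linked (\autoref{c0}~\ref{c0:2}) directly, for an arbitrary finite set $Q\in[\Por]^{<\omega}$, with the witness taken to be $Q':=Q^{\not\perp}$ (so that the optional requirement $Q'\subseteq Q^{\not\perp}$ is also satisfied). The first clause is immediate: since $p\parallel p$ for every condition $p$, we have $Q\subseteq Q^{\not\perp}=Q'$, hence $A:=Q$ is a finite subset of $Q'$ with $Q\subseteq A^{\not\perp}$, again because $p\parallel p$.

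For the second clause, I would fix $A\in[Q']^{<\omega}$ with $Q\subseteq A^{\not\perp}$ and an open dense set $D\subseteq\Por$, and build $B$ by processing the finitely many elements of $Q$ one at a time. For each $p\in Q$ choose some $q_p\in A$ with $p\parallel q_p$, then a common extension $r_p\leq p,q_p$, and finally, using density of $D$, a condition $r_p'\leq r_p$ with $r_p'\in D$. Put $B:=\set{r_p'}{p\in Q}$. Then $B$ is finite and $B\subseteq D$; moreover $r_p'\leq p\in Q$ shows $r_p'\parallel p$, so $r_p'\in Q^{\not\perp}=Q'$, and hence $B\in[D\cap Q']^{<\omega}$. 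Since $r_p'\leq q_p\in A$ for every $p\in Q$, the set $B$ refines $A$; and since $r_p'\leq p$ witnesses $p\parallel r_p'$, we get $Q\subseteq B^{\not\perp}$. This completes the verification that $Q$ is $\compact$-linked.

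The ``in particular'' clause is then a one-liner: Cohen forcing $\Cor$ is countable, so $\Cor=\bigcup_{n<\omega}\{p_n\}$ for an enumeration $\seq{p_n}{n<\omega}$ of $\Cor$; each singleton $\{p_n\}$ is a finite set, hence $\compact$-linked by the first part, so $\Cor$ is $\sigma$-$\compact$-linked. I expect no genuine obstacle here --- this is the base case of the $\compact$-linked machinery --- and the only points that need a moment's attention are that the ``one element at a time'' construction of $B$ is legitimate precisely because $Q$ is finite, and that ``$B$ refines $A$'' in \autoref{c0} is a per-element statement ($\forall p\in B\bsp\exists q\in A\bsp p\leq q$), so it suffices that each $r_p'$ extends the particular $q_p$ chosen for $p$, with no cardinality or bijectivity constraint relating $B$ and $A$.
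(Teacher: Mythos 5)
Your proof is correct and follows essentially the same route as the paper: the paper's one-line proof takes $Q'$ to be the downward closure $\set{p\in\Por}{\exists q\in Q\ p\leq q}$, while you take $Q'=Q^{\not\perp}$, but both witnesses satisfy the optional requirement $Q'\subseteq Q^{\not\perp}$ and the verification of clauses (a) and (b) — handling the finitely many $p\in Q$ one at a time via a common extension with an element of $A$ pushed into $D$ — is exactly the argument the paper leaves implicit. The countability argument for Cohen forcing is likewise the intended one.
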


\begin{proof}
If $Q\subseteq\Por$ is finite, consider
$Q'=\set{p\in\Por}{\exists q\in Q\bsp p\leq q}$.
\end{proof}

Our $\compact$-linkednes is connected to $\Fr$-linked property due to Mej\'ia~\cite{mejvert}. To illustrate this, we begin with the following notation: 

\begin{itemize}
\item Denote by $\Fr:=\set{\omega\menos a}{a\in[\omega]^{<\aleph_0}}$ the \emph{Fr\'echet filter}.

\item A filter $F$ on $\omega$ is \emph{free} if $\Fr\subseteq F$. A set $x\subseteq\omega$ is \emph{$F$-positive} if it intersects every member of $F$. Denote by $F^+$ the family of $F$-positive sets. Note that $x\in\Fr^+$ iff $x$ is an infinite subset of $\omega$.
\end{itemize}

\begin{definition}[{\cite{mejvert, BCM}}]\label{Def:Fr}
Let $\Por$ be a poset and $F$ a filter on $\omega$. A set $Q\subseteq \Por$ is \emph{$F$-linked} if, for any $\bar p=\seq{p_n}{n<\omega}\in Q^\omega$, there is some $q\in\Por$ such that \[q\Vdash\set{n\in\omega}{p_n\in\dot G}\in F^+.\]
Observe that, in the case $F=\Fr$, the above equation is “$\set{n\in\omega}{p_n\in\dot G}$ is infinite”. 

We say that $Q$ is \emph{uf-linked (ultrafilter-linked)} if it is $F$-linked for any filter $F$ on $\omega$ containing the \emph{Fr\'echet filter} $\Fr$.

For an infinite cardinal $\theta$, $\Por$ is \emph{$\mu$-$F$-linked} if $\Por=\bigcup_{\alpha<\theta}Q_\alpha$ for some $F$-linked $Q_\alpha$ ($\alpha<\theta$). When these $Q_\alpha$ are uf-linked, we say that $\Por$ is \emph{$\theta$-uf-linked}.
\end{definition}

It is clear that any uf-linked set $Q\subseteq \Por$ is F-linked, and consequently also $\Fr$-linked. But for ccc poset we have:

\begin{lemma}[{\cite[Lem~5.5]{mejvert}}]
If\/ $\Por$ is ccc then any subset of\/ $\Por$ is uf-linked iff it is Fr-linked. 
\end{lemma}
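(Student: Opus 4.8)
The direction ``uf-linked $\Rightarrow$ $\Fr$-linked'' is immediate and uses nothing about the ccc, since $\Fr$ is itself a free filter. For the converse I would fix a $\Fr$-linked set $Q\subseteq\Por$, an arbitrary free filter $F$ on $\omega$, and a sequence $\bar p=\seq{p_n}{n<\omega}\in Q^\omega$, and aim to produce $q\in\Por$ with $q\Vdash\set{n<\omega}{p_n\in\dot G}\in F^+$. Throughout I would abbreviate $Z(r)=\set{n<\omega}{r\parallel p_n}$ (note $r'\leq r$ implies $Z(r')\subseteq Z(r)$) and $\dot X=\set{n<\omega}{p_n\in\dot G}$.

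The first step is a routine forcing computation: if $q$ is such that $Z(r)\in F^+$ for every $r\leq q$, then $q\Vdash\dot X\in F^+$. Indeed, were some $q_1\leq q$ to force $\dot X\cap\check A=\emptyset$ for some $A\in F$, then $q_1\perp p_n$ for all $n\in A$, so $A\cap Z(q_1)=\emptyset$, contradicting $Z(q_1)\in F^+$. Consequently, if no $q$ forced $\dot X\in F^+$, then the (downward-closed) set $E=\set{r\in\Por}{Z(r)\notin F^+}$ would be dense, and I would derive a contradiction from this.

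Assume $E$ is dense. I would choose $W\subseteq E$ maximal among antichains contained in $E$; since $E$ is dense $W$ is in fact a maximal antichain of $\Por$, hence countable by the ccc, say $W=\set{q_k}{k<\omega}$. For each $k$ pick $A_k\in F$ with $A_k\cap Z(q_k)=\emptyset$ (possible because $Z(q_k)\notin F^+$). Since $F$ is a proper filter extending $\Fr$, every finite intersection $\bigcap_{k\leq j}A_k$ lies in $F$ and is therefore infinite, so I can recursively pick $n_0<n_1<\cdots$ with $n_j\in\bigcap_{k\leq j}A_k$ and put $B=\set{n_j}{j<\omega}$; this $B$ is an infinite pseudo-intersection of $\set{A_k}{k<\omega}$, i.e.\ $B\smallsetminus A_k$ is finite for every $k$.

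To finish, I would invoke $\Fr$-linkedness of $Q$ for the subsequence $\seq{p_n}{n\in B}\in Q^\omega$: it yields some $q^*$ forcing $\set{n\in B}{p_n\in\dot G}$ to be infinite. But no condition can do this: given any $q\in\Por$, maximality of $W$ gives $k$ with $q_k\parallel q$; pick $\hat q\leq q,q_k$. Then $Z(\hat q)\cap B\subseteq Z(q_k)\cap B\subseteq(Z(q_k)\cap A_k)\cup(B\smallsetminus A_k)=B\smallsetminus A_k$, which is finite, so $\hat q$ is compatible with only finitely many $p_n$ with $n\in B$, whence $\hat q\Vdash\set{n\in B}{p_n\in\dot G}$ is finite. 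Since $q$ was arbitrary, no condition forces that set infinite, contradicting the existence of $q^*$. The real idea is this last reduction --- the ccc lets me extract a countable antichain and hence a single pseudo-intersection $B$ that packages all the relevant members of $F$ into one subsequence of $\bar p$, so that uf-linkedness collapses to $\Fr$-linkedness for that subsequence; the only other thing I expect to require care is the opening forcing computation recasting ``$q\Vdash\dot X\in F^+$'' purely as a compatibility statement about conditions below $q$.
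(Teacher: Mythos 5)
Your proof is correct: the reduction of ``$q\Vdash\set{n}{p_n\in\dot G}\in F^+$'' to the compatibility sets $Z(r)$, the countable (by ccc) maximal antichain inside the dense set of bad conditions, the pseudo-intersection $B$ of the witnesses $A_k$, and the application of $\Fr$-linkedness to the subsequence $\seq{p_n}{n\in B}$ all check out. The paper itself states this lemma as a quoted result of Mej\'ia \cite[Lem.~5.5]{mejvert} without proof, and your argument is essentially the standard one from that source, so there is nothing to flag.
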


\begin{lemma}\label{c2}
Every\/ $\compact$-linked set is\/ $\Fr$-linked.
\end{lemma}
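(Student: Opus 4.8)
The plan is to unwind $\Fr$-linkedness of $Q$ into a purely combinatorial property of $\Por$ and then read off a contradiction directly from clause~(b) of $\compact$-linkedness.

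First I would record the routine forcing reformulation: given a sequence $\bar p=\seq{p_n}{n<\omega}\in\Por^\omega$, a condition $q$ forces that $\set{n}{p_n\in\dot G}$ is infinite if and only if every $q'\leq q$ is compatible with $p_n$ for infinitely many $n$. For the forward direction, fix $q'\leq q$ and pass to a generic $G$ containing $q'$: then $q\in G$, the set $\set{n}{p_n\in G}$ is infinite in $V[G]$, and each $n$ in it yields $p_n\in G$, hence $p_n\parallel q'$; compatibility being absolute, $q'$ is compatible with infinitely many $p_n$ in $V$. For the converse, given such a $q$ and any $k$, the set $\set{q'}{\exists n\geq k\bsp q'\leq p_n}\cup\set{q'}{q'\perp q}$ is dense, so a generic through $q$ meets it at every level~$k$, forcing the set to be unbounded. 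Thus it suffices, for a fixed $\bar p\in Q^\omega$, to find $q\in\Por$ with every $q'\leq q$ compatible with infinitely many $p_n$.

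Next I would argue by contradiction: if no such $q$ exists, then
\[D:=\set{q'\in\Por}{q'\text{ is compatible with only finitely many }p_n}\]
is dense in $\Por$, and it is also open, since any $q''\leq q'$ compatible with $p_n$ forces $q'$ to be compatible with $p_n$. Let $Q'\subseteq\Por$ witness that $Q$ is $\compact$-linked (\autoref{c0}). By clause~(a) fix $A\in[Q']^{<\omega}$ with $Q\subseteq A^{\not\perp}$, and apply clause~(b) to this $A$ together with the open dense set $D$: we obtain $B\in[D\cap Q']^{<\omega}$ with $Q\subseteq B^{\not\perp}$. Since $B$ is finite and each of its members lies in $D$, i.e.\ is compatible with only finitely many $p_n$, there is $n\in\omega$ such that $p_n$ is incompatible with every member of $B$, that is, $p_n\notin B^{\not\perp}$ --- contradicting $p_n\in Q\subseteq B^{\not\perp}$. (If $Q=\varnothing$ the $\Fr$-linkedness of $Q$ is vacuous.) Hence the required $q$ exists, so $Q$ is $\Fr$-linked.

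I do not expect a genuinely hard step here: the leverage is entirely in clause~(b), which is tailored precisely to refine a finite ``core'' $A$ with $Q\subseteq A^{\not\perp}$ into a finite subset $B$ of any prescribed open dense set while preserving $Q\subseteq B^{\not\perp}$ --- and that is impossible when every member of the dense set kills cofinitely many of the $p_n$. The only place where care is needed is the forcing reformulation in the first step, namely getting the density argument (and the openness of $D$) right; everything after that is a one-line application of the definition.
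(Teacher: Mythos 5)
Your proof is correct. It reaches the conclusion by a somewhat different route than the paper, although both ultimately rest on the same lever: clause (b) of \autoref{c0} applied to a suitable open dense set, together with the finiteness of the resulting set $B$ satisfying $Q\subseteq B^{\not\perp}$. The paper applies that clause to the open dense set of conditions deciding a $\Por$-name $\dot n$ of a natural number and concludes that $m=\max\set{n\in\omega}{\exists p\in B\ p\Vdash\dot n=n}$ satisfies $\forall q\in Q\ q\not\Vdash m<\dot n$; that is, it verifies the bounding-of-names reformulation of $\Fr$-linkedness and leaves the translation back to the literal definition implicit (one would apply the bound to the name $\sup\set{n+1}{p_n\in\dot G}$, which names a natural number exactly when no condition forces the trace to be infinite --- essentially the density argument you carry out explicitly). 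You instead work with the definition directly: you reformulate \lqq$q$ forces $\set{n}{p_n\in\dot G}$ to be infinite\rqq\ as \lqq every $q'\leq q$ is compatible with $p_n$ for infinitely many $n$\rqq, observe that failure of this for every $q$ makes $D=\set{q'}{\set{n}{q'\parallel p_n}\text{ is finite}}$ open dense, and then clause (b) yields a finite $B\subseteq D\cap Q'$ with $Q\subseteq B^{\not\perp}$, which is absurd since some $p_n$ must be incompatible with every member of $B$. Your version is fully self-contained (at the cost of the forcing reformulation in your first step, which you verify correctly), while the paper's is shorter and directly produces the quantitative statement about names of natural numbers, which is the form typically used in goodness arguments.
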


\begin{proof}
Let $\Vdash_\Por\dot n\in\omega$ and let $Q\subseteq\Por$ be $\compact$-linked.
By condition~\ref{c0:2} of~\autoref{c0} there is a~finite set $B\subseteq\Por$ of
conditions deciding~$\dot n$ such that $Q\subseteq B^{\not\perp}$.
Let $m=\max\set{n\in\omega}{\exists p\in B\bsp p\Vdash\dot n=n}$.
Then $\forall q\in Q\bsp q\not\Vdash m<\dot n$.
\end{proof}

Observe that the converse of the above result does not hold since, a poset $\Por$ can be $\Fr$-linked (\cite[Lem.~3.2]{BreIII}, see also~\cite[Lem.~3.11]{CRS}) but not $\compact$-linked (see below~\autoref{c3}).

\begin{lemma}\label{c3}
If\/ $\Por$ is $\sigma$-$\compact$-linked, then\/ $\Por$ is\/ $\Ebf^\cpr_{\leq}$-good. 
\end{lemma}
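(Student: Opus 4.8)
The plan is to verify Definition~\ref{c12} with $\theta=\aleph_1$ directly. Write $\Por=\bigcup_{m<\omega}Q_m$ where each $Q_m$ is $\compact$-linked (discard empty ones, so each $Q_m\neq\emptyset$), witnessed by a set $Q'_m\subseteq\Por$ as in Definition~\ref{c0}\ref{c0:2}. Fix a $\Por$-name $\dot h$ for a member of $\Sigma_\omega$. For each $m$ I will build, by recursion on $|s|$, finite sets $B^{(m)}_s\subseteq Q'_m$ indexed by $s\in{}^{<\omega}\omega$, each deciding $\dot h(\check s)$, such that $B^{(m)}_s$ refines $B^{(m)}_{s\frestr(|s|-1)}$ and $Q_m\subseteq(B^{(m)}_s)^{\not\perp}$; then I set $\sigma_m(s):=\max\set{n\in\omega}{\exists p\in B^{(m)}_s\ p\Vdash\dot h(\check s)=\check n}$ and $H:=\set{\sigma_m}{m<\omega}$, a countable subset of $\Sigma_\omega$.

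For the recursion, note that for each $s$ the set $D_s$ of conditions deciding $\dot h(\check s)$ is open dense. Using clause (a) of Definition~\ref{c0}\ref{c0:2} choose $A_0\in[Q'_m]^{<\omega}$ with $Q_m\subseteq A_0^{\not\perp}$, and using clause (b) with $D=D_\emptyset$ obtain $B^{(m)}_\emptyset\in[D_\emptyset\cap Q'_m]^{<\omega}$ refining $A_0$ with $Q_m\subseteq(B^{(m)}_\emptyset)^{\not\perp}$. Given $B^{(m)}_t$ with $Q_m\subseteq(B^{(m)}_t)^{\not\perp}$ and any one-step extension $s$ of $t$, clause (b) applied to $A=B^{(m)}_t$ and $D=D_s$ yields the desired $B^{(m)}_s$.

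To see that $H$ works, let $f\in\baire$ be $\Ebf^\cpr_\leq$-unbounded over $H$ and suppose, toward a contradiction, that $\Vdash f\not\predby^\cpr_\leq\dot h$ fails. Then some $p_0\Vdash f\predby^\cpr_\leq\dot h$; extending $p_0$ to decide a witnessing window length $k_0$ together with a threshold $i_0$ beyond which the required index in the window $[i,i+k_0)$ exists, and passing to the length $k:=k_0+i_0$, we obtain $p\leq p_0$ and $k\in\omega$ with $p\Vdash\forall i\ \exists j\in[i,i+k)\ f(j)\leq\dot h(f\frestr j)$. Fix $m$ with $p\in Q_m$. For each $j\in\omega$, since $Q_m\subseteq(B^{(m)}_{f\frestr j})^{\not\perp}$ there is $q\in B^{(m)}_{f\frestr j}$ compatible with $p$; since $B^{(m)}_{f\frestr j}$ refines $B^{(m)}_{f\frestr(j-1)}$, which refines $B^{(m)}_{f\frestr(j-2)}$, and so on down to $B^{(m)}_\emptyset$, this $q$ lies below a member of $B^{(m)}_{f\frestr i}$ for every $i\leq j$, and since that member decides $\dot h(f\frestr i)$ at a value $\leq\sigma_m(f\frestr i)$, we get $q\Vdash\dot h(f\frestr i)\leq\sigma_m(f\frestr i)$ for all $i\leq j$. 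Choosing $q_j\leq p,q$ and combining with the forcing statement of $p$, we obtain $q_j\Vdash\forall i'\leq j-k+1\ \exists j'\in[i',i'+k)\ f(j')\leq\sigma_m(f\frestr j')$ (for $i'\leq j-k+1$ the witness $j'$ lies in $[0,j]$, where $q_j$ bounds $\dot h$ by $\sigma_m$); since $f$, $\sigma_m$, $j$, $k$ all belong to the ground model, this conclusion holds in $V$. Letting $j\to\infty$ yields $f\predby^\cpr_\leq\sigma_m$ with witness $k$, contradicting that $f$ is unbounded over $H\ni\sigma_m$. Hence $\Por$ is $\aleph_1$-$\Ebf^\cpr_\leq$-good, i.e., $\Ebf^\cpr_\leq$-good.

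The crux is the ``threading'' step in the verification: a single $q\in B^{(m)}_{f\frestr j}$ compatible with $p$ must bound $\dot h(f\frestr i)$ simultaneously for all $i\leq j$. This is precisely what the refinement clause in the definition of $\compact$-linked supplies — it forces the sets $B^{(m)}_{f\frestr 0},B^{(m)}_{f\frestr 1},\dots$ along $f$ to form a descending chain of refinements, so that $q$ sits below a deciding condition at every earlier level — and it is the reason plain $\Fr$-linkedness (which, as in Lemma~\ref{c2}, only controls one value at a time) does not suffice for this argument.
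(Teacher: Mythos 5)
Your proof is correct and follows essentially the same strategy as the paper's: decompose $\Por$ into $\compact$-linked pieces, use clause (b) of the $\compact$-linkedness definition to build finite deciding sets that refine one another along initial segments of a real, read off ground-model predictors as the maxima of the decided values, and transfer the constant $\leq$-prediction back to the ground model via a compatible condition sitting below deciding conditions at all earlier levels. The only (immaterial) differences are bookkeeping: you index the refining sets by the tree ${}^{<\omega}\omega$ and keep a countable family $H=\set{\sigma_m}{m<\omega}$, whereas the paper fixes a linear enumeration of ${}^{<\omega}\omega$ and diagonalizes the $\psi_k$ into a single predictor, and you argue by contradiction where the paper runs the density argument directly.
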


\begin{proof}
Let $\Por=\bigcup_kQ_k$ with $Q_k$ is $\compact$-linked.
Fix a~one-to-one enumeration $\set{s_n}{n\in\omega}$ of ${}^{<\omega}\omega$
such that $|s_n|\leq n$ and $s_n\subseteq s_m$ implies $n\leq m$.
Let $D_n=\set{p\in\Por}{\text{$p$ decides $\dot\pi(s_n)$}}$.
By definition of $\compact$-linked sets for every $k\in\omega$ we can find
a~set $Q_k'\subseteq\Por$ and a~sequence of finite sets
$B_{k,n}\subseteq D_n\cap Q_k'$, $n\in\omega$, such that
$Q_k\subseteq B_{k,n}^{\not\perp}$  and $B_{k,n+1}$ refines~$B_{k,n}$.
Define 
\begin{align*}
\psi_k(s_n)&=\max\set{m}{\exists q\in B_{k,n}\bsp
q\Vdash\dot\pi(s_n)=m},\\*
\psi(s_n)&=\max\set{\psi_k(s_n)}{k\leq n}.
\end{align*}

Assume $x\not\predby^\cpr_{\leq}\psi$.
Fix $p\in\Por$ and $k,n\in\omega$.
Then $p\in Q_{k_0}$ for some $k_0\in\omega$.
Let $i\ge\max\{k_0,n\}$ be such that
$\forall j\in[i,i+k)\bsp\psi(x\frestr j)<x(j)$ and let
$x\frestr j=s_{n_j}$.
Then $k_0\leq i\leq n_i\leq\dots<n_{i+k-1}$ and there is $q\in B_{k_0,n_{i+k-1}}$
which is compatible with~$p$.
Let $r$ be a~common extension of $p$ and~$q$.
Then for every $j\in[i,i+k)$, $r$~extends an element of~$B_{k_0,n_j}$ and hence,
$r\Vdash\dot\pi(x\frestr j)\leq
\psi_{k_0}(x\frestr j)\leq\psi(x\frestr j)<x(j)$.
It follows that
$\Vdash``\forall k,n\in\omega\bsp\exists i\ge n\bsp\forall j\in[i,i+k)$
$\dot\pi(x\frestr j)<x(j)$'',
i.e., $\Vdash x\not\predby^\cpr_{\leq}\dot\pi$.
\end{proof}

Now, we introduce a forcing notion that increases $\efrak^\const_{b,\rel}$.

\begin{definition}
Let $b\in\baire$ and ${\rel}\in\{{=},{\leq},{\neq}\}$.
\begin{enumerate}
\item Conditions in the forcing $\Por^b_{\rel}$ are triples $(k,\sigma,F)$ such that
\begin{itemize}
\item
$\set{f\frestr n}{f\in F\setand n\leq k}\subseteq
\dom(\sigma)\subseteq\Seq_{\leq k}(b)$,
we require the equality $\dom(\sigma)=\Seq_{\leq k}(b)$, if $b\in\baire$,
\item
$f\frestr k\ne g\frestr k$ for all $f\ne g$ belonging to~$F$,
and
\item
$f(k)\rel\sigma(f\frestr k)$ for all $f\in F$.
\end{itemize}
\item The partial order is defined by
\begin{align*}
(k',\sigma',F')\leq(k,\sigma,F)\Leftrightarrow{}&
k'\ge k,\ \sigma'\supseteq\sigma,\ F'\supseteq F,\text{ and}\\
&\forall f\in F\ \forall i\in[k,k')\ \exists j\in\{n,n+1\}\
f(j)\rel\sigma'(f\frestr j).
\end{align*}  
\item Let $G\subseteq\Por^b_{\rel}$ be a $\Por^b_{\rel}$-generic over $V$. Then in $V[G]$, the generic predictor is given by
\[\sigma_{\gen,\rel}^b=\bigcup\set{\sigma}{\exists k, F\colon(k,\sigma, F)\in G}.\]
\end{enumerate}
When $\rel$ is $=$, we omit the index in $\Por_{\rel}^b$. 
\end{definition}

Using a standard density argument, it can be proved the following:

\begin{fact}
$\sigma_{\gen,\rel}^b$ is a function from ${}^{<\omega}\omega$ to $\omega$, and and for all $f\in\baire\cap V$, $f\predby^\cpr_{\rel}\sigma_{\gen,\rel}^b$. 
\end{fact}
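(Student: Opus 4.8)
The plan is to run the standard density argument for $\Por^b_{\rel}$. Fix a generic filter $G$ over $V$ and write $\sigma_*:=\sigma_{\gen,\rel}^b$. \emph{That $\sigma_*$ is a function} follows from directedness of $G$: any two of its conditions have a common extension $(k,\sigma,F)\in G$, and since the order demands $\sigma\supseteq\sigma_0$ for $(k,\sigma,F)\leq(k_0,\sigma_0,F_0)$, the predictor coordinates of conditions in $G$ cohere; their union is therefore single‑valued, with values in $\omega$. \emph{That its domain is $\Seq(b)$} (which, composed with $e_b$ as in \autoref{b0}, yields a total predictor in $\Sigma_\omega$) follows from the density of $D_s=\set{(k,\sigma,F)\in\Por^b_{\rel}}{s\in\dom(\sigma)}$ for each $s\in\Seq(b)$: given $(k,\sigma,F)$ one raises the first coordinate to some $k'\geq\max\{k,|s|\}$ and fills the predictor out onto $\Seq_{\leq k'}(b)$ (the few forced values are described below); the resulting triple is a condition below $(k,\sigma,F)$ lying in $D_s$.

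For the prediction clause it suffices to treat $f\in\prod b\cap V$ (the case of general $f\in\baire\cap V$ is handled the same way after replacing $\sigma_*$ by $e_b(\sigma_*)$, cf.\ \autoref{b0}). The crucial observation is that the extension clause of the order forces a correct $\rel$-prediction of $f$ at every coordinate past the stage at which $f$ is put into the third coordinate, so that a window of length two always captures one. Concretely, I would show $E_f:=\set{(k,\sigma,F)\in\Por^b_{\rel}}{f\in F}$ is dense. Given $(k,\sigma,F)$ with $f\notin F$, choose $k'>k$ large enough that $f\frestr k'$ and the finitely many $g\frestr k'$ ($g\in F$) are pairwise distinct. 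Since $F$ is the third coordinate of a condition, the $g\frestr k$ ($g\in F$) are already pairwise distinct, hence so are the $g\frestr j$ for every $j\geq k$; therefore one may define $\sigma'\supseteq\sigma$ on $\Seq_{\leq k'}(b)$ by choosing, for each $g\in F$ and each $j$ with $k<j\leq k'$, a value $\sigma'(g\frestr j)$ with $g(j)\rel\sigma'(g\frestr j)$ — possible since each of $=$, $\leq$, $\neq$ admits a legal predicted value in $\omega$ for any argument — together with $\sigma'(f\frestr k')$ chosen so that $f(k')\rel\sigma'(f\frestr k')$, and arbitrarily on the remaining new sequences; for $j=k$ the old value $\sigma(g\frestr k)$ already works. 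Then $(k',\sigma',F\cup\{f\})\leq(k,\sigma,F)$ is a condition in $E_f$, the extension clause holding because $j=i$ witnesses $g(i)\rel\sigma'(g\frestr i)$ for each $g\in F$ and $i\in[k,k')$. The same length‑extension shows each $\set{(k,\sigma,F)}{k\geq m}$ is dense.

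Finally, by genericity pick $(k_0,\sigma_0,F_0)\in G$ with $f\in F_0$; then $f(k_0)\rel\sigma_*(f\frestr k_0)$, and for any refinement $(k_1,\sigma_1,F_1)\in G$ one has $\forall i\in[k_0,k_1)\ \exists j\in\{i,i+1\}\ f(j)\rel\sigma_1(f\frestr j)=\sigma_*(f\frestr j)$. Density of the sets $\set{(k,\sigma,F)}{k\geq m}$ makes the $k_1$'s unbounded, so this holds for all $i\geq k_0$; taking $k:=k_0+2$, every interval $[i,i+k)$ then contains some $j$ with $f(j)\rel\sigma_*(f\frestr j)$ — use $\{i,i+1\}$ if $i\geq k_0$, and note $\{k_0,k_0+1\}\subseteq[i,i+k_0+2)$ if $i<k_0$ — so $f\predby^\cpr_{\rel}\sigma_*$. \textbf{The main obstacle} is the density of $E_f$: one must absorb a new ground‑model real into the finite third coordinate while extending the predictor onto the enlarged domain compatibly with the order's extension clause, and this works only because the reals already committed in $F$ are pairwise distinct above level $k$, so the required predictor values can be prescribed independently at the finitely many relevant sequences.
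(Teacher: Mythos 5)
Your argument is correct and is precisely the ``standard density argument'' that the paper invokes without writing out: filterhood makes $\sigma_{\gen,\rel}^b$ a well-defined (total, by density in the first coordinate) predictor, density of the sets $E_f$ absorbs any ground-model real into the finite side condition, and the extension clause then yields constant $\rel$-prediction with window $2$ beyond the stage where $f$ enters. The only caveat is your parenthetical reduction of arbitrary $f\in\baire\cap V$ to $f\in\prod b$ via $e_b$, which for ${\rel}\in\{{=},{\leq}\}$ only works when $f\in\xprod b$; but this imprecision is already in the Fact as stated (it should read $f\in\prod b\cap V$ unless $b$ is constantly $\omega$), and your proof of the essential case $f\in\prod b\cap V$ is complete.
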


Just like in~\cite[Lem.~3.2]{BreIII}, we have 

\begin{lemma}\label{lem:Pblink}
$\Por^\const_{b,\rel}$ is $\sigma$-linked for every $b\in\baire[(\omega+1\smallsetminus2)]$.
\end{lemma}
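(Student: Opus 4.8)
The plan is to exhibit a countable partition of the forcing into linked pieces indexed by the finite ``stem'' of a condition. For a condition $(k,\sigma,F)\in\Por^b_{\rel}$, call $(k,\sigma)$ its \emph{stem}, and put two conditions into the same piece exactly when they share a stem. First I would check that there are only countably many stems: since every $b(n)\leq\omega$, each $\Seq_n(b)$ is a finite product of countable sets, so $\Seq(b)$ is countable and $\sigma$ ranges over the countable collection of finite partial functions from $\Seq(b)$ to $\omega$ (when $b\in\baire$ this is just the functions on the finite set $\Seq_{\leq k}(b)$), while $k$ ranges over $\omega$. Thus $\Por^b_{\rel}=\bigcup_{(k,\sigma)}Q_{(k,\sigma)}$ is a countable union, and it remains to prove that each $Q_{(k,\sigma)}$ is linked.

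So fix a stem $(k,\sigma)$ and two conditions $p_0=(k,\sigma,F_0)$ and $p_1=(k,\sigma,F_1)$; I must produce a common extension. Put $F=F_0\cup F_1$. The crucial combinatorial observation is that every node of the tree of traces of $F$ above level $k$ carries at most two functions: if three functions of $F$ shared a trace at some level $j\geq k$, they would already share the same trace at level $k$, but each of $F_0,F_1$ contains at most one function with a given $k$-trace (by the distinctness clause $f\frestr k\ne g\frestr k$ in the definition of a condition), so $F$ contains at most two. This ``$\leq 2$ per node'' property is exactly what the constant (window-$2$) prediction encoded in the order of $\Por^b_{\rel}$ is tailored to exploit.

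To build the common extension $(k',\sigma',F)$ I would choose $k'>k$ so large that the finitely many distinct reals in $F$ have pairwise distinct traces at $k'$, and then define $\sigma'\supseteq\sigma$ level by level on the traces $\set{f\frestr j}{f\in F\comma k\leq j\leq k'}$ (extending arbitrarily to the rest of $\Seq_{\leq k'}(b)$ when $b\in\baire$). At a node carrying a single function I predict that function correctly; at a node carrying two functions I satisfy both when $\rel\in\{\leq,\neq\}$ (take $\sigma'$ equal to the maximum of the two values, respectively to a value distinct from both, which is legitimate since the generic predictor is not required to be limited), and for $\rel={=}$ I satisfy one of the two. In the last case, the function I miss at level $j$ lands, by the $\leq 2$ property, alone on its node at level $j+1$ and is therefore predicted correctly there. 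Hence along each $f\in F$ no two consecutive levels in $[k,k')$ are errors; and since $f$ is already correct at level $k$ (as $f\in F_0$ or $f\in F_1$) and correct at $k'$ (being predicted alone), the window-$2$ clause $\forall i\in[k,k')\ \exists j\in\{i,i+1\}\ f(j)\rel\sigma'(f\frestr j)$ holds for every $f\in F_0$ and every $f\in F_1$. Together with the distinctness of $k'$-traces, this makes $(k',\sigma',F)$ a legitimate condition refining both $p_0$ and $p_1$, witnessing linkedness.

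The main obstacle is the design of $\sigma'$ so that the window-$2$ requirement is met simultaneously for all functions of $F_0\cup F_1$; a priori, repeated branching could force two consecutive prediction errors along some branch (this genuinely occurs once a single node carries four branches forming a full binary tree over two levels). The resolution is the observation above: the distinctness of $k$-traces inside a single condition caps the multiplicity of every node of $F_0\cup F_1$ at two, so any branch missed at one level is isolated—and hence correctly predicted—at the next. Everything else (countability of stems, checking the refinement clauses, and filling in $\sigma'$ on the full domain $\Seq_{\leq k'}(b)$ when $b\in\baire$) is routine.
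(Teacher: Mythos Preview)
Your argument is correct and is precisely the intended one: the paper does not spell out a proof but points to \cite[Lem.~3.2]{BreIII}, and later (inside the proof of \autoref{c6}, Claim~\ref{c7}) explicitly relies on the fact that the sets $\set{p\in\Por^b}{k_p=k\setand\sigma_p=\sigma}$ are $2$-linked, citing \autoref{lem:Pblink}---exactly the stem-partition and the ``at most two functions per node above level $k$, hence no two consecutive misses'' mechanism you describe. The only cosmetic remark is that your shortcut for ${\rel}={\neq}$ (picking a value distinct from both) presumes unlimited predictors; if one insists on limited $\sigma$, the same window-$2$ argument you give for ${=}$ covers ${\neq}$ as well, so nothing is lost.
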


The following shows that $\Por^b$ does not increase $\efrak^\const_\leq$. 

\begin{lemma}\label{c6}
If $b\in\baire$, then\/ $\Por^b$ is $\sigma$-$\compact$-linked.
\end{lemma}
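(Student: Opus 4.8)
The plan is to write down the canonical countable decomposition of $\Por^b$ witnessing $\sigma$-$\compact$-linkedness and to observe that, thanks to the hypothesis $b\in\baire$, each piece of this decomposition is \emph{finite}, so that Lemma~\ref{c1} applies verbatim. Concretely, for $k<\omega$ and a map $\sigma\colon\Seq_{\leq k}(b)\to\omega$ I would set
$Q_{k,\sigma}:=\set{(k,\sigma,F)}{(k,\sigma,F)\in\Por^b}$,
so that trivially $\Por^b=\bigcup\set{Q_{k,\sigma}}{k<\omega\comma\sigma\colon\Seq_{\leq k}(b)\to\omega}$.

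First I would check that this is a \emph{countable} union: since $b\in\baire$, every $b(i)$ is finite, hence each $\Seq_{\leq k}(b)=\bigcup_{n\leq k}\Seq_n(b)$ is a finite set, and so for each $k$ there are only countably many admissible stems $\sigma$ (each being a function from a finite set into $\omega$). Next I would check that each $Q_{k,\sigma}$ is in fact a \emph{finite} subset of $\Por^b$: in a condition $(k,\sigma,F)$ the requirements ``$f\frestr n\in\dom(\sigma)$ for $n\leq k$'' and ``$f(k)\rel\sigma(f\frestr k)$'' force $f\frestr(k+1)\in\Seq_{k+1}(b)$ for every $f\in F$, and the only data of $f$ that enters the defining clauses of $\Por^b$ at stage $k$ is $f\frestr(k+1)$; since $\Seq_{k+1}(b)$ is finite, there are only finitely many possible side conditions $F$, whence $Q_{k,\sigma}$ is finite. (This is the second, and decisive, use of $b\in\baire$.)

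Finally I would invoke Lemma~\ref{c1}, according to which every finite subset of a forcing notion is $\compact$-linked; thus each $Q_{k,\sigma}$ is $\compact$-linked and $\Por^b=\bigcup_{k,\sigma}Q_{k,\sigma}$ is a countable union of $\compact$-linked sets, i.e.\ $\Por^b$ is $\sigma$-$\compact$-linked. I do not expect any real obstacle here: the only point that needs a little attention is the finiteness of the pieces $Q_{k,\sigma}$, and this rests squarely on $b\in\baire$ — for a general $b\in\baire[(\omega+1\smallsetminus2)]$ with some coordinate $b(i)=\omega$ the stems and the side conditions range over infinite sets, the pieces $Q_{k,\sigma}$ are infinite, and one obtains only $\sigma$-linkedness (Lemma~\ref{lem:Pblink}) instead of $\sigma$-$\compact$-linkedness.
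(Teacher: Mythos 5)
Your decomposition does not work, because the pieces $Q_{k,\sigma}$ are not finite. In a condition $(k,\sigma,F)\in\Por^b$ the third component $F$ is a finite set of \emph{infinite} sequences (elements of $\prod b$): the extension relation demands $f(j)\rel\sigma'(f\frestr j)$ for all future $j\in[k,k')$, so the whole tail of each $f\in F$ is part of the condition (it is a promise about how the generic predictor must behave from $k$ onwards), not just $f\frestr(k+1)$. Hence, for fixed $(k,\sigma)$ there are continuum many admissible $F$'s, and $Q_{k,\sigma}$ has size $\cfrak$. In fact $\Por^b$ itself has size continuum, so \emph{no} decomposition of $\Por^b$ into countably many finite pieces can exist, and \autoref{c1} cannot carry the argument; your claim that ``the only data of $f$ that enters the defining clauses at stage $k$ is $f\frestr(k+1)$'' confuses membership in $\Por^b$ with the identity of the condition and with the extension relation.

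The hypothesis $b\in\baire$ is used for a different purpose: it makes $\prod b$ (and hence $(\prod b)^n$) \emph{compact}, not the pieces finite. The paper's proof splits $\Por^b$ into the countably many sets $Q_{k,\sigma,T}$ of conditions with fixed $k$, fixed $\sigma$ (here finiteness of $\Seq_{\leq k}(b)$ is what makes the index set countable) and fixed trace $T=F\frestr k$, introduces an almost-extension relation $\leq_a$ which implies compatibility, observes that for each $p$ the set of tuples $F\in(\prod b)^{|T|}$ with $p\leq_a(k,\sigma,F)$ is clopen, and then uses compactness of $(\prod b)^{|T|}$ to extract the finite sets $A$ and, refining inside a given dense set, $B$ required by the definition of $\compact$-linkedness. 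Some argument of this kind (handling the tails of the reals in $F$ via compactness) is unavoidable; a purely counting argument as in your proposal cannot succeed.
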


\begin{proof}
Denote
\begin{align*}
&E=\set{(k,\sigma,T)}{\text{$k\in\omega$, $\sigma:\Seq_{\leq k}(b)\to\omega$, and
$T\subseteq\Seq_k(b)$}},\\
&Q_{k,\sigma,T}=\set{(k,\sigma,F)\in\Por^b}{F\frestr k=T\setand|F|=|T|},\quad(k,\sigma,T)\in E.
\end{align*}
The set~$E$ is countable and clearly,
$\Por^b=\bigcup_{(k,\sigma,T)\in E}Q_{k,\sigma,T}$.

We say that $(k',\sigma',F')$ is an almost extension of
$(k,\sigma,F)$ in~$\Por^b$ and we write 
$(k',\sigma',F')\leq_a(k,\sigma,F)$, if
\begin{enumerate}
\item[(i)]
$\exists G\subseteq F'$
$G\frestr k'=F\frestr k'$ and $(k',\sigma',F')\leq(k,\sigma,G)$
and
\item[(ii)]
$\forall f\in F\bsp\exists j\in\{k'-1,k'\}\bsp\sigma'(f\frestr j)=f(j)$;
this is unspoken in~\cite{BreIII}.
\end{enumerate}
It is easy to see that
$(k',\sigma',F')\leq(k,\sigma,F)$ implies $(k',\sigma',F')\leq_a(k,\sigma,F)$.

\begin{clm}\label{c7}
If\/ $(k',\sigma',F')\leq_a(k,\sigma,F)$, then $(k',\sigma',F')$ and
$(k,\sigma,F)$ are compatible.
\end{clm}

\begin{proof}
Clearly, $k'\ge k$ and $f\frestr k'\ne g\frestr k'$ for
all distinct $f,g\in F'$ and for all distinct $f,g\in F$.
If $k'=k$, then $\sigma'=\sigma$ and the set
\[\set{p\in\Por^b}{k_p=k\setand\sigma_p=\sigma}\] is $2$-linked (this can proved exactly like in~\autoref{lem:Pblink}).
If $k'>k$ the proof is similar:
Find $k''\ge k'$ such that $f\frestr k''\ne g\frestr k''$ for
all distinct $f,g\in F'\cup F$.
We extend~$\sigma'$ to $\sigma'':\Seq_{\leq k''}(b)\to\omega$ so that
$(k'',\sigma'',F'\cup F)\in\Por^b$ and
$(k'',\sigma'',F'\cup F)$ extends $(k,\sigma,F)$ and $(k',\sigma',F')$.
Note that for every $f\in F$, $\forall i\in[k,k')$
$\exists j\in\{i,i+1\}\bsp\sigma'(f\frestr j)=f(j)$;
this holds by~(i) for every $i\in[k,k'-2]$, and by~(ii) for $i=k'-1$.
It remains to ensure that
$\forall f\in F'\cup F\bsp\forall i\in[k',k'')\bsp\exists j\in\{i,i+1\}$
$\sigma''(f\frestr j)=f(j)$ and $\sigma''(f\frestr k'')=f(k'')$.
Let $s\in\Seq_{\leq k''}(b)$ and $|s|>k'$.
If $k'<|s|<k''$, define
$\sigma''(s)=f(|s|)$, if $s\subseteq f\in F'$ and $|s|-k'$ is even
or $s\subseteq f\in F\smallsetminus F'$ and $|s|-k'$ is odd, and
$\sigma''(s)=0$, otherwise.
If $|s|=k''$ define $\sigma''(s)=f(k'')$, if $s\subseteq f\in F'\cup F$, and
$\sigma''(s)=0$, otherwise.
\end{proof}

For $(k,\sigma,T)\in E$, $n=|T|$, and $p\in\Por^b$ let
\[
\textstyle
U_{k,\sigma,T}^p=\set{F\in(\prod b)^n}{p\leq_a(k,\sigma,F)\in Q_{k,\sigma,T}}.
\]

\begin{clm}\label{c8}
$U_{k,\sigma,T}^p$ is a~clopen subset of\/ $(\prod b)^n$ for $n=|T|$.
\qed
\end{clm}

\begin{clm}\label{c9}
\startlist
\begin{enumerate}[label=\rm(\alph*)]
\item\label{c9:a} If $A\subseteq\Por^b$ and\/
$\forall q\in Q_{k,\sigma,T}\bsp\exists p\in A\bsp p\leq_aq$,
then there is $A_0\in[A]^{<\omega}$ such that\/
$\forall q\in Q_{k,\sigma,T}\bsp\exists p\in A_0\bsp p\leq_aq$.

\item\label{c9:b} There is $A_0\in[\Por]^{<\omega}$ such that
$Q_{k,\sigma,T}\subseteq A_0^{\not\perp}$.

\item\label{c9:c} If $A\in[\Por^b]^{<\omega}$ and\/
$\forall q\in Q_{k,\sigma,T}\bsp\exists p\in A\bsp p\leq_aq$, then for every dense
set $D\subseteq\Por^b$ there is $B\in[D]^{<\omega}$ such that
$B$~refines~$A$ and
$\forall q\in Q_{k,\sigma,T}\ \exists p\in B\ p\leq_aq$.

\item\label{c9:d} If $A\in[\Por^b]^{<\omega}$ and\/ $Q_{k,\sigma,T}\subseteq A^{\not\perp}$, 
then for every dense set $D\subseteq\Por^b$ there is
$B\in[D]^{<\omega}$ such that $B$~refines~$A$ and
$\forall q\in Q_{k,\sigma,T}\ \exists p\in B\ p\leq_aq$.
Consequently, $Q_{k,\sigma,T}\subseteq B^{\not\perp}$.
\end{enumerate}
\end{clm}

\begin{proof}
\ref{c9:a}: This follows from \autoref{c8} because $(\prod b)^n$ is a~compact space where $n=|T|$.

\ref{c9:b} is a~consequence of~\ref{c9:a} for $A=\Por^b$ and \ref{c9:c} is a~consequence of~\ref{c9:d},
both according to~\autoref{c7}.

\ref{c9:d}: Let $B'=\set{p\in D}{\exists p'\in A\bsp p\leq p'}$.
Let $q\in Q_{k,\sigma,T}$ and $p'\in A$ be arbitrary such that $p'\parallel q$.
By density of~$D$ there is $p\in D$ such that $p\leq p'$ and $p\leq q$ and hence,
$p\in B'$ and $p\leq_aq$.
Therefore
$\forall q\in Q_{k,\sigma,T}\bsp\exists p\in B'\bsp p\leq_aq$.
By~(a) there is a~finite set $B\subseteq B'$ with the same property.
\end{proof}
Continuing the proof of~\autoref{c6}, $\Por^b=\bigcup_{(k,\sigma,T)\in E}Q_{k,\sigma,T}$ and
by~\autoref{c9} \ref{c9:b} and~\ref{c9:d}, $Q_{k,\sigma,T}$ is $\compact$-linked
taking $Q'=\Por^b$ in the definition.
\end{proof}

Likewise~\autoref{c6}, for any relation~$\rel$ we have the following:

\begin{lemma}
$\Por^b_{\rel}$ is $\sigma$-$\compact$-linked whenever $b\in\baire[(\omega\smallsetminus2)]$.
\qed
\end{lemma}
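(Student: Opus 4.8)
The plan is to follow the proof of~\autoref{c6} essentially line for line, since $\Por^b_{\rel}$ is obtained from~$\Por^b$ only by relaxing the equality demand $\sigma(f\frestr k)=f(k)$ in a~condition to $f(k)\rel\sigma(f\frestr k)$, and the argument of~\autoref{c6} is almost insensitive to which relation is used. Concretely, I would keep the same countable decomposition $\Por^b_{\rel}=\bigcup_{(k,\sigma,T)\in E}Q_{k,\sigma,T}$, where $E=\set{(k,\sigma,T)}{k\in\omega,\ \sigma\colon\Seq_{\leq k}(b)\to\omega,\ T\subseteq\Seq_k(b)}$ and $Q_{k,\sigma,T}=\set{(k,\sigma,F)\in\Por^b_{\rel}}{F\frestr k=T\setand|F|=|T|}$; here $E$ is countable precisely because $b\in\baire[(\omega\smallsetminus2)]$ makes every $\Seq_{\leq k}(b)$ finite. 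I would also keep the almost-extension preorder $\leq_a$ from~\autoref{c6}, with clause~(ii) now read with~$\rel$ in place of~$=$, i.e.\ for every $f\in F$ there is $j\in\{k'-1,k'\}$ with $f(j)\rel\sigma'(f\frestr j)$.

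The only genuinely new step is a~trivial one: fix a~function $\varphi\colon\omega\to\omega$ with $m\rel\varphi(m)$ for every $m$ (take $\varphi=\id$ when $\rel\in\{{=},{\leq}\}$ and $\varphi(m)=m+1$ when $\rel$ is~${\neq}$). Then I would re-prove the analogue of~\autoref{c7}: given $(k',\sigma',F')\leq_a(k,\sigma,F)$, choose $k''\ge k'$ with all members of $F'\cup F$ pairwise distinct on~$k''$ and extend $\sigma'$ to $\sigma''\colon\Seq_{\leq k''}(b)\to\omega$ exactly as in~\autoref{c6}, but replacing every assignment ``$\sigma''(s)=f(|s|)$'' by ``$\sigma''(s)=\varphi(f(|s|))$'' (retaining the even/odd alternation in $|s|-k'$ and the default value~$0$). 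The verification that $(k'',\sigma'',F'\cup F)$ is a~condition of $\Por^b_{\rel}$ extending both $(k,\sigma,F)$ and $(k',\sigma',F')$ is then unchanged: well-definedness of $\sigma''$ uses only that the elements of~$F'$ are pairwise distinct on~$k'$ and those of~$F$ on~$k$, and the required relational instances $f(j)\rel\sigma''(f\frestr j)$ come from clause~(i) of~$\leq_a$ below~$k'-1$, from clause~(ii) at~$k'-1$, and from the alternation together with $m\rel\varphi(m)$ above~$k'$.

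Once compatibility of $\leq_a$-related conditions is in place, the remaining ingredients carry over verbatim, because they never use the particular relation~$\rel$: the sets $U_{k,\sigma,T}^p=\set{F\in(\prod b)^{|T|}}{p\leq_a(k,\sigma,F)\in Q_{k,\sigma,T}}$ are still clopen in the compact space $(\prod b)^{|T|}$ (membership is decided by finitely many coordinates of~$F$, namely $F\frestr k=T$, the finitely many relations $f(k)\rel\sigma(f\frestr k)$, and the finitely many demands entering $p\leq_a(k,\sigma,F)$), so the finite-approximation statements of~\autoref{c9} hold word for word, and one concludes as in~\autoref{c6} that each $Q_{k,\sigma,T}$ is $\compact$-linked with witness $Q'=\Por^b_{\rel}$, whence $\Por^b_{\rel}$ is $\sigma$-$\compact$-linked.

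I do not anticipate a real obstacle. The one place to stay alert is the case $\rel={\neq}$, where $\varphi(m)\ne m$, so one must double-check --- exactly as the even/odd bookkeeping is checked in~\autoref{c6} --- that the filling-in of $\sigma''$ never tries to assign two different values $\varphi(f(|s|))\ne\varphi(g(|s|))$ at a~single $s$; this is precisely the point that a~sequence of length $>k'$ cannot lie below two distinct members of a~single condition, which is guaranteed by the distinctness clauses of $\Por^b_{\rel}$.
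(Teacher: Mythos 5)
Your proposal is correct and is essentially the paper's own argument: the paper proves this lemma only by the remark ``Likewise \autoref{c6}'', and your write-up supplies exactly the intended adaptation --- the same countable decomposition into the sets $Q_{k,\sigma,T}$, the same almost-extension relation with clause~(ii) read with $\rel$, and the single new ingredient of a selector $\varphi$ with $m\rel\varphi(m)$ replacing the equality assignments in the construction of $\sigma''$, which indeed works for each ${\rel}\in\{{=},{\leq},{\neq}\}$ since predictors in $\Por^b_{\rel}$ need not be limited. Your observation that compactness of $(\prod b)^{|T|}$ (hence Claims \ref{c8}--\ref{c9}) is available precisely because $b\in\baire[(\omega\smallsetminus2)]$, and that well-definedness of $\sigma''$ follows from the distinctness clauses, covers the only points where care is needed, so the rest of the \autoref{c6} machinery carries over verbatim.
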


The following forcing notion increases $\efrak^\forall_{b,\neq}$

\begin{definition}
Let $b$  be as in~\autoref{def:pred}.
\begin{enumerate}
\item The forcing $\Por^{b,\forall}_{\neq}$ is the set of triples
$(k,\sigma,F)$ where $k\in\omega$,
$\sigma\colon\Seq_{<k}(b)\to\omega$ is a~finite partial function mapping
$\Seq_n(b)$ into $b(n)$ for every $n<k$,
$F\subseteq\prod b$ is finite,
\begin{itemize}
\item $f\frestr k\neq g\frestr k$ for all $f\neq g$ belonging to~$F$, and
\item $\set{f\frestr n}{f\in F\setand n<k}\subseteq\dom(\sigma)$;
we require $\dom(\sigma)=\Seq_{<k}(b)$, if $b\in\baire$.
\end{itemize}
\item $\Por^{b,\forall}_{\neq}$ is ordered by
\begin{align*}
(k',\sigma',F')\leq(k,\sigma,F)
{}\Leftrightarrow{}
k'\geq k\comma\sigma'\supseteq\sigma\comma& F'\supseteq F,\setand\\
&\forall f\in F\bsp\forall j\in[k,k')\
f(j)\neq\sigma'(f\frestr j).
\end{align*}
\end{enumerate}   
\end{definition}

Let $G\subseteq\Por^b_{\neq}$ be a $\Por^b_{\neq}$-generic over $V$. Then in $V[G]$, the generic limited predictor is given by
\[\sigma_{\gen,\neq}^b=\bigcup\set{\sigma}{\exists k, F\colon(k,\sigma, F)\in G}.\]

Using a standard density argument, we can get:

\begin{fact}
$\sigma_{\gen,\neq}^b\colon\Seq(b)\to\omega$ is a limited predictor such that
$\forall^\infty j\in\omega$
$x(j)\neq\sigma_{\gen,\neq}^b(x\frestr j)$
for every $x\in V\cap\prod b$.
\end{fact}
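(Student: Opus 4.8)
The plan is a routine density-and-genericity argument for a generic filter $G$ on the forcing $\Por^{b,\forall}_{\neq}$ just defined, split into three parts: (i) $\sigma_{\gen,\neq}^b$ is a well-defined function that is total on $\Seq(b)$; (ii) it is limited; (iii) it is eventually $\ne$-different from every $x\in V\cap\prod b$.

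For (i), I would first observe that the clause $\sigma'\supseteq\sigma$ in the ordering makes the middle coordinates of conditions in $G$ cohere: any two conditions in $G$ have a common lower bound in $G$ whose middle coordinate extends both, so their middle coordinates agree on the overlap of their domains, and hence $\sigma_{\gen,\neq}^b$ is a function. For totality I would show that, for each $s\in\Seq(b)$, the set $D_s=\set{(k,\sigma,F)\in\Por^{b,\forall}_{\neq}}{s\in\dom(\sigma)}$ is dense. Given a condition $(k,\sigma,F)$ not in $D_s$, put $k'=\max\{k,|s|+1\}$ and extend $\sigma$ to a finite partial function $\sigma'$ whose domain contains $s$ and all $h\frestr n$ ($h\in F$, $n<k'$) --- all of $\Seq_{<k'}(b)$ when $b\in\baire$ --- choosing, for each $j\in[k,k')$ and the unique $f\in F$ with $f\frestr j$ equal to a prescribed string (unique because $f\frestr k\ne g\frestr k$ for distinct $f,g\in F$ and $j\ge k$), a value $\sigma'(f\frestr j)\in b(j)\smallsetminus\{f(j)\}$ --- nonempty since $b(j)\ge2$ --- and arbitrary legal values on all other strings of its domain, including at $s$. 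Then $(k',\sigma',F)\le(k,\sigma,F)$ lies in $D_s$, so genericity gives $s\in\dom(\sigma_{\gen,\neq}^b)$. Part (ii) is immediate: each condition's middle coordinate sends $\Seq_n(b)$ into $b(n)$, hence so does the union, so together with (i) this says $\sigma_{\gen,\neq}^b$ is a limited predictor.

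For (iii), fix $x\in V\cap\prod b$. I would first show $D^x=\set{(k,\sigma,F)\in\Por^{b,\forall}_{\neq}}{x\in F}$ is dense: given $(k,\sigma,F)$ with $x\notin F$, use finiteness of $F$ to choose $k'\ge k$ with $x\frestr k'\ne f\frestr k'$ for every $f\in F$, and extend $\sigma$ to $\sigma'$ on the strings required by $k'$ and by $F\cup\{x\}$, again arranging $\sigma'(f\frestr j)\ne f(j)$ for $f\in F$ and $j\in[k,k')$ (at this step there is no constraint on the values of $\sigma'$ at restrictions of $x$). Then $(k',\sigma',F\cup\{x\})\le(k,\sigma,F)$ lies in $D^x$, so some $(k^*,\sigma^*,F^*)\in G$ has $x\in F^*$. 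Finally, for any $j\ge k^*$, the set of conditions whose first coordinate exceeds $j$ is dense --- increase $k$ exactly as in the totality step --- hence dense below $(k^*,\sigma^*,F^*)$, so by genericity there is $(k',\sigma',F')\in G$ with $(k',\sigma',F')\le(k^*,\sigma^*,F^*)$ and $k'>j$. Then $x\in F^*\subseteq F'$ and $j\in[k^*,k')$, so the definition of the ordering gives $x(j)\ne\sigma'(x\frestr j)=\sigma_{\gen,\neq}^b(x\frestr j)$, the last equality holding because $x\frestr j\in\dom(\sigma')$ (as $x\in F'$ and $j<k'$). As $j\ge k^*$ was arbitrary, $\forall^\infty j\ x(j)\ne\sigma_{\gen,\neq}^b(x\frestr j)$.

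I do not expect a genuine obstacle; the one point that needs a moment's care --- and the place where the hypotheses $b(j)\ge2$ and the distinctness clause $f\frestr k\ne g\frestr k$ for distinct $f,g\in F$ are actually used --- is verifying that each finite partial function $\sigma$ can always be extended so as to meet the finitely many new inequalities $\sigma'(f\frestr j)\ne f(j)$ simultaneously and without clashing.
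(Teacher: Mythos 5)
Your proof is correct and is exactly the ``standard density argument'' that the paper invokes without writing out: density of the sets forcing $s\in\dom(\sigma_{\gen,\neq}^b)$, density of the sets putting a given ground-model $x\in\prod b$ into the third coordinate, and then the ordering's $\neq$-clause applied to conditions in $G$ with large first coordinate, with the hypotheses $b(j)\ge 2$ and the separation clause $f\frestr k\ne g\frestr k$ used just where you say they are. No discrepancy with the paper's (omitted) argument.
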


\begin{lemma}\label{Pbforalinked}
If $b\in{}^\omega(\omega + 1 \smallsetminus 2)$ and $\lim_{n\to\infty} b(n)=\infty$, then\/
$\Por^{b,\forall}_{\neq}$ is $\sigma$-$n$-linked for every $n\geq1$.
\end{lemma}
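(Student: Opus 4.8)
The plan is to show, for each fixed $n\ge1$ separately, that $\Por^{b,\forall}_{\neq}$ is a countable union of $n$-linked sets (sets any $n$ of whose members have a common lower bound). Since $\lim_m b(m)=\infty$, the set $\set{m\in\omega}{b(m)\le n}$ is finite, so fix $N=N_n$ with $b(m)\ge n+1$ for every $m\ge N$. The remark that drives the whole argument is that $\Seq_N(b)=\prod_{m<N}b(m)$ is \emph{countable}, being a finite product of sets of size ${\le}\aleph_0$; hence there are only countably many triples $(k,\sigma,G)$ with $k\in\omega$, $\sigma$ a finite partial function with $\dom(\sigma)\subseteq\Seq_{<k}(b)$ and $\sigma(s)\in b(|s|)$ for all $s\in\dom(\sigma)$ (and $\dom(\sigma)=\Seq_{<k}(b)$ when $b\in\baire$), and $G\in[\Seq_N(b)]^{<\omega}$. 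For each such triple put
\[Q_{k,\sigma,G}=\set{(k,\sigma,F)\in\Por^{b,\forall}_{\neq}}{\set{f\frestr N}{f\in F}=G}.\]
Each condition $(k,\sigma,F)$ lies in $Q_{k,\sigma,G}$ for $G=\set{f\frestr N}{f\in F}$, so these countably many sets cover $\Por^{b,\forall}_{\neq}$, and it remains to prove that each is $n$-linked.

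So fix conditions $(k,\sigma,F_1),\dots,(k,\sigma,F_n)\in Q_{k,\sigma,G}$, put $F=\bigcup_{i\le n}F_i$ (so $\set{f\frestr N}{f\in F}=G$), and pick $k'\ge\max\{k,N\}$ large enough that distinct members of the finite set $F$ have distinct restrictions to $k'$. I would first record the following dichotomy for a node $t\in\Seq_j(b)$ with $k\le j<k'$. If $j<N$: the map $u\mapsto u\frestr j$ is injective on $G$, since writing $G=\set{h\frestr N}{h\in F_1}$, distinct $h,h'\in F_1$ have $h\frestr k\ne h'\frestr k$ (as $(k,\sigma,F_1)$ is a condition) and hence $h\frestr j\ne h'\frestr j$; consequently at most one member of $G$ extends $t$, and since $f\frestr N\in G$ for every $f\in F$, all $f\in F$ with $f\frestr j=t$ share a single value $f(j)$. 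If $N\le j$: within each $F_i$ the restrictions to a level $\ge k$ are pairwise distinct, so there are at most $n$ values $f(j)$ with $f\in F$ and $f\frestr j=t$, and $n<b(j)$.

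Now I would extend $\sigma$ to $\sigma'$ by letting, for each $j\in[k,k')$ and $t\in\Seq_j(b)$, $\sigma'(t)$ be some element of $b(j)$ different from every $f(j)$ with $f\in F$ and $f\frestr j=t$ — possible by the dichotomy together with $b(j)\ge2$ — and $\sigma'(t)=0$ when no such $f$ exists; take $\dom(\sigma')=\Seq_{<k'}(b)$ when $b\in\baire$, and otherwise define $\sigma'$ only on the finitely many restrictions of members of $F$. Then $(k',\sigma',F)\in\Por^{b,\forall}_{\neq}$, and $(k',\sigma',F)\le(k,\sigma,F_i)$ for each $i\le n$: the clauses $k'\ge k$, $\sigma'\supseteq\sigma$ and $F\supseteq F_i$ are immediate, and for $f\in F_i$ and $j\in[k,k')$ we have $f(j)\ne\sigma'(f\frestr j)$ because $f$ is one of the functions whose value at level $j$ was avoided when $\sigma'(f\frestr j)$ was chosen. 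Thus the $n$ given conditions have a common lower bound, $Q_{k,\sigma,G}$ is $n$-linked, and the lemma follows.

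The one real obstacle is the construction of the common extension $\sigma'$: without care the $n$ conditions could, between them, force all of $b(j)$ to be avoided at a node $t$ of a level $j<N$ with $b(j)\le n$, leaving no admissible value for $\sigma'(t)$. Building the finite set $G$ of length-$N$ stems into the partition is exactly what rules this out — below level $N$ it confines each function to a stem already separated at level $k$, so only a single value must be dodged at each such node — while above level $N$ the hypothesis $\lim_m b(m)=\infty$ supplies room for the at most $n$ values contributed by the $n$ conditions.
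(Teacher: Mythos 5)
Your proof is correct, and its core amalgamation step is the same as the paper's: fix the pair $(k,\sigma)$ together with a finite set of stems, take the union $F$ of the $n$ finite sets of reals, separate its members at some level $k'$, and extend $\sigma$ by dodging at each node the finitely many values contributed by $F$, which is possible because $\lim_m b(m)=\infty$. The difference lies in the decomposition. The paper first passes to the open dense set $P_n=\set{p}{k_p\geq m_n}$, where $m_n$ is the threshold beyond which $b(j)>n$, and groups its elements by $(k,\sigma)$ and the level-$k$ stems $\set{f\frestr k}{f\in F_p}$; then every level $j\geq k$ that occurs in the amalgamation automatically satisfies $b(j)>n$, so at most $n$ values must be avoided at each node and the argument is uniform. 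The cost is that this literally covers only a dense subset of $\Por^{b,\forall}_{\neq}$, and the (easy, standard) step of transferring $\sigma$-$n$-linkedness from a dense subset to the whole poset is left implicit. You instead cover the entire poset at once, grouping by $(k,\sigma)$ and the level-$N$ stems $\set{f\frestr N}{f\in F}$, and you pay for this with the extra dichotomy at levels $j\in[k,N)$, where $b(j)$ may be as small as $2$: your observation that fixing the level-$N$ stems forces all members of $F$ passing through a node below $N$ to share a single value there is exactly what makes this work, and it is argued correctly (injectivity of restriction on $G$ because members of $F_1$ are already separated at level $k\leq j$). So both arguments are sound; the paper's is slightly shorter and uniform, while yours verifies the covering requirement of the definition of $\sigma$-$n$-linked on the nose.
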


\begin{proof}
Let $m_n=\min\set{m\in\omega}{\forall k\geq m\bsp b(k)>n}$.
For every $n\in\omega$ the set
\[
P_n=\set{p\in\Por^{b,\forall}_{\neq}}{k_p\geq m_n}
\]
is an open dense subset of $\Por^{b,\forall}_{\neq}$.
Let $\Sigma_{b,k}=\set{\tau\frestr\Seq_{<k}(b)}{\tau\in\Sigma_b}$.
For $n,k\in\omega$, $\sigma\in\Sigma_{b,k}$, and a~finite set $S\subseteq\Seq_k(b)$
denote
\[
P_{n,k,\sigma,S}=\set{p\in D_n}{k_p=k\comma\sigma_p=\sigma,\set{f\frestr k}{f\in F_p}=S}.
\]
It is easy to see that
$P_n=\bigcup\set{P_{n,k,\sigma,S}}
{k\geq m_n\comma\sigma\in\Sigma_{b,k}\comma S\in[\Seq_k(b)]^{<\omega}}$
is the union of countably many sets.
We prove that every set $P_{n,k,\sigma,S}$ is $n$-linked.

Let $A\in[P_{n,k,\sigma,S}]^n$.
We find a~condition $q\in\Por^{b,\forall}_{\neq}$ that is an extension of conditions in~$A$.
Define $F_q=\bigcup_{p\in A}F_p$ and
$k_q=\min\set{i\geq k}{\forall f,g\in F_q\bsp f\neq g\Rightarrow f\frestr i\neq g\frestr i}$.
We define $\sigma_q\in\Sigma_{b,k_q}$ such that $\sigma_q\supseteq\sigma$ and
$f(j)\neq\sigma_q(f\frestr j)$ for every $f\in F_q$ and $j\in[k,k_q)$.
Let $j\in[k,k_q)$ and $t\in\Seq_j(b)$.
Denote $S_j=\set{f\frestr j}{f\in F_q}$.
If $t\notin S_j$, then let $\sigma_q(t)$ be any value of $b(j)$.
Assume that $t\in S_j$.
For every $p\in A$ there is at most one $f\in F_p$ such that $t\subseteq f$.
Therefore the set $E_t=\set{f\in F_q}{t\subseteq f}$ has cardinality at most~$n$.
Since $j\geq m_n$, $|b(j)|>n$ and therefore it is possible to set $\sigma_q(t)$ to a~value
from $b(j)\smallsetminus\set{f(j)}{f\in E_t}$.
Hence, if $t=f\frestr j$ for some $f\in F_q$ and $j\in[k,k_q)$, then
$f(j)\neq\sigma_q(t)=\sigma_q(f\frestr j)$.
\end{proof}

\subsection*{Forcing \texorpdfstring{$\efrak_2^\const$}{} small}\

The following is based on~\cite{CRS}. Fix $k<\omega$, denote by $\Sigma_2^k$ the collection of maps of $\sigma\colon\bigcup_{i<\omega}2^{ik}\to2^{k}$. Let $\Ebf_2^k=\la\cantor,\Sigma_2^k,{\sqsubset^\star}\ra$ where $x\sqsubset^\star\sigma$ iff $\forall^\infty i\colon x\frestr[ik,(i+1)k)\neq\sigma(x\frestr ik)$. It can be proved that the relational system $\Ebf_2^k$ is a Prs.

Notice that

\begin{fact}[{\cite[Fact.~2.14]{CRS}}]\label{cxn:e_2}
$\Ebf_2^k\leqT\Ebf^\cpr_2$. In particular, $\efrak_2^\const\leq\bfrak(\Ebf_2^k)$ and\/ $\dfrak(\Ebf_2^k)\leq\vfrak_2^\const$.
\end{fact}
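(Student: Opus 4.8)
The goal reduces to constructing a Tukey connection witnessing $\Ebf_2^k\leqT\Ebf^\cpr_2$: once we have maps $\Psi_-\colon\cantor\to\cantor$ and $\Psi_+\colon\Sigma_2\to\Sigma_2^k$ with $\Psi_-(x)\predby^\cpr_{=}\sigma\Rightarrow x\sqsubset^\star\Psi_+(\sigma)$ for all $x\in\cantor$ and $\sigma\in\Sigma_2$, the ``in particular'' clause is automatic, since $\efrak_2^\const=\bfrak(\Ebf^\cpr_2)$, $\vfrak_2^\const=\dfrak(\Ebf^\cpr_2)$, and $\leqT$ reverses $\bfrak$ and preserves $\dfrak$. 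So the plan is to build these two maps, following the recoding idea of \cite{CRS}.

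First I would fix the combinatorial frame: a partition of $\omega$ into consecutive intervals $\langle J_i:i<\omega\rangle$ with $|J_i|$ strictly increasing and $\geq k$, and identify $x\in\cantor$ with the sequence of its $k$-blocks $x_i:=x\frestr[ik,(i+1)k)\in 2^k$. Define $\Psi_-(x)\in\cantor$ so that $\Psi_-(x)\frestr J_i$ is a fixed recoding of the block $x_i$ (storing it recoverably, and arranged so that $\Psi_-(x)\frestr(J_0\cup\dots\cup J_{i-1})$ depends only on $x\frestr ik$); this makes $\Psi_-$ continuous and, importantly, makes the image of $x_i$ on the long interval $J_i$ ``wide'' relative to $\sigma$. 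Define $\Psi_+(\sigma)\in\Sigma_2^k$, for $\sigma\in\Sigma_2$, by: on input $s\in 2^{ik}$, reconstruct the $\Psi_-$-prefix up to the start of $J_i$ determined by $s$, simulate $\sigma$ along it, and output the $k$-block $b$ whose recoding, placed on $J_i$, would make $\sigma$ err on the longest run of consecutive positions inside $J_i$ (with a fixed tie-break). Both maps are total and well-typed.

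For the verification, suppose $\Psi_-(x)\predby^\cpr_{=}\sigma$ is witnessed by a window $k'$, and assume for contradiction that $x\not\sqsubset^\star\Psi_+(\sigma)$, i.e.\ $x_i=\Psi_+(\sigma)(x\frestr ik)$ for infinitely many $i$. For each such $i$ the block $x_i$ is, by construction of $\Psi_+(\sigma)$, the one forcing the longest $\sigma$-error run on $J_i$; the recoding must be engineered so that this run has length at least $h(|J_i|)$ for a fixed $h$ with $h(n)\to\infty$. Since $|J_i|\to\infty$, for large such $i$ this error run exceeds $k'$, so $\sigma$ has a window of $k'$ consecutive wrong predictions on $\Psi_-(x)$, contradicting $\Psi_-(x)\predby^\cpr_{=}\sigma$ via the window $k'$. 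Hence $x\sqsubset^\star\Psi_+(\sigma)$, which is the Tukey-connection requirement.

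The genuinely delicate point---and the reason this is quoted from \cite[Fact~2.14]{CRS} rather than proved ad hoc---is the design of the recoding on the $J_i$: one must arrange that the worst block $b$ really chains into an error run longer than any prescribed $k'$, rather than merely an error run of length $\approx k$, which a window $k'=k+1$ would absorb (a literal identity recoding fails for exactly this reason). I expect essentially all the work to lie here, and I would import the construction of \cite{CRS}. The remaining items---continuity of $\Psi_-$, membership $\Psi_+(\sigma)\in\Sigma_2^k$, and the passage from the Tukey connection to $\efrak_2^\const\le\bfrak(\Ebf_2^k)$ and $\dfrak(\Ebf_2^k)\le\vfrak_2^\const$---are routine.
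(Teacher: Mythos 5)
Your framing (reduce to a Tukey connection; the ``in particular'' clause is routine) is fine, but the construction you sketch cannot be carried out, and the obstruction is exactly at the point you yourself flag as delicate. You require the recoding on $J_i$ to have the property that, for every $\sigma\in\Sigma_2$ and every history, at least one of the $2^k$ recoded candidate blocks forces a run of consecutive $\sigma$-errors of length $h(|J_i|)$ with $h(n)\to\infty$. This is impossible. By your own stipulations ($\Psi_-(x)\frestr(J_0\cup\dots\cup J_{i-1})$ depends only on $x\frestr ik$, and the recoding is fixed and recoverable), the set $C$ of possible contents of the segment $J_i$ of $\Psi_-(x)$, given the bits seen so far, has at most $2^k$ elements and is determined by that prefix. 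Consider the predictor $\pi^*$ which, at any such prefix, outputs the next bit of the lexicographically least member of $C$ still consistent with the bits of $J_i$ read so far (arbitrary values elsewhere). Each time $\pi^*$ errs inside $J_i$ on some $\Psi_-(x)$, the codeword it was following is eliminated from the consistent set while the true continuation survives; hence $\pi^*$ makes at most $2^k-1$ errors on $J_i$, for every $x$ simultaneously, no matter how long $J_i$ is and no matter how the recoding is engineered. So the maximal error run obtainable from only $2^k$ candidate codewords per segment is capped by $2^k-1$; it cannot tend to infinity with $|J_i|$, and your verification (``for large $i$ the forced run exceeds $k'$'') collapses.

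The failure is not repairable within your scheme: once $|J_i|>2^k$, an error run of $\pi^*$ on any $y\in\ran(\Psi_-)$ spans at most one segment boundary, so every such $y$ is constantly predicted by the single $\pi^*$ with a fixed window of length about $2^{k+1}$. Then, for whatever $\Psi_+$ you define, the real $x^*$ given recursively by $x^*\frestr[ik,(i+1)k)=\Psi_+(\pi^*)(x^*\frestr ik)$ satisfies $\Psi_-(x^*)\predby^\cpr_{=}\pi^*$ while $x^*\not\sqsubset^\star\Psi_+(\pi^*)$, so no pair with a $\Psi_-$ of this blockwise, boundedly-branching coding type is a Tukey connection from $\Ebf_2^k$ into $\Ebf^\cpr_2$. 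This is also genuinely different from the proof in the paper, which does not stretch or recode at all: there $\Psi_-$ is the identity and $\Psi_+(\pi)(s)$, for $s\in 2^{ik}$, is the unique $\tau\in 2^k$ such that $\pi$ mispredicts $s^\frown\tau$ at every position of $[ik,(i+1)k)$, with the verification left to \cite[Fact~2.14]{CRS}. So the amplification device you plan to ``import'' from \cite{CRS} is not what that argument does and cannot exist; you need to revisit the actual CRS argument rather than the scheme sketched here.
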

\begin{proof}
Note that $\Psi_-\colon\cantor\to\cantor$, defined as the identity map, and $\Psi_+\colon\Sigma_2\to\Sigma_2^k$ is defined as $\Psi_+(\pi)(\sigma)=$ the unique $\tau\in{}^{k}2$
such that $\pi$ predicts $\sigma\char 94\tau$ incorrectly on the whole interval $[ik,(i + 1)k)$ where $|\sigma|=ik$. It is clear that
$(\Psi_-,\Psi_+)$ is the required Tukey connection.
\end{proof}

\begin{lemma}[{\cite{BreShevaIV}, see also~\cite[Lem.~2.15]{CRS}}]\label{c10}
If\/ $\Por$ is $\sigma$-$2^k$-linked, then\/ $\Por$ is\/ $\Ebf_2^k$-good.
\end{lemma}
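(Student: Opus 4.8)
The plan is to run the standard Judah--Shelah/Brendle goodness argument (cf.\ the proof of \autoref{c3}), invoking $\sigma$-$2^k$-linkedness only at the single point where the bounded width of the relation matters. Recall that $\sigma$-$2^k$-linked means $\Por=\bigcup_{m<\omega}Q_m$ where each $Q_m$ is $2^k$-linked, i.e.\ any $2^k$ conditions from $Q_m$ have a common lower bound, and that being $\Ebf_2^k$-good is by definition being $\aleph_1$-$\Ebf_2^k$-good; so it suffices, given a $\Por$-name $\dot h$ for an element of $\Sigma_2^k$, to produce a nonempty \emph{countable} $H\subseteq\Sigma_2^k$ such that every $x\in\cantor$ that is $\Ebf_2^k$-unbounded over $H$ satisfies $\Vdash x\not\sqsubset^{\star}\dot h$.

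Write $\Por=\bigcup_{m<\omega}Q_m$ with $Q_m$ being $2^k$-linked. The key claim is: for every $m<\omega$ and every $s\in\bigcup_{i<\omega}2^{ik}$ there is a value $v_{m,s}\in{}^{k}2$ such that no $p\in Q_m$ forces $\dot h(s)\neq v_{m,s}$. Indeed, if not, then for each of the $|{}^{k}2|=2^k$ many values $v$ we could choose $p_v\in Q_m$ with $p_v\Vdash\dot h(s)\neq v$; by $2^k$-linkedness these (at most) $2^k$ conditions have a common lower bound $q$, and then $q\Vdash\dot h(s)\neq v$ for all $v\in{}^{k}2$, i.e.\ $q\Vdash\dot h(s)\notin{}^{k}2$, a contradiction. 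Fix such values and define $\sigma_m\in\Sigma_2^k$ by $\sigma_m(s)=v_{m,s}$, and set $H=\set{\sigma_m}{m<\omega}$; note that $H$ depends only on $\dot h$ (and the chosen decomposition of $\Por$), not on any $x$.

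Now let $x\in\cantor$ be $\Ebf_2^k$-unbounded over $H$ and suppose, towards a contradiction, that some $p\in\Por$ forces $x\sqsubset^{\star}\dot h$. Passing to an extension we may assume that for some fixed $i_0<\omega$, $p\Vdash\forall i\ge i_0\colon x\frestr[ik,(i+1)k)\neq\dot h(x\frestr ik)$, and then fix $m$ with $p\in Q_m$. Since $x\not\sqsubset^{\star}\sigma_m$, there are infinitely many $i$ with $x\frestr[ik,(i+1)k)=\sigma_m(x\frestr ik)$; pick one with $i\ge i_0$ and set $s=x\frestr ik$, so $x\frestr[ik,(i+1)k)=v_{m,s}$. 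By the key claim, $p$ does not force $\dot h(s)\neq v_{m,s}$, so there is $q\le p$ with $q\Vdash\dot h(s)=v_{m,s}=x\frestr[ik,(i+1)k)$; since $q\le p$ and $i\ge i_0$, this contradicts $p\Vdash x\frestr[ik,(i+1)k)\neq\dot h(x\frestr ik)$. Hence $\Vdash x\not\sqsubset^{\star}\dot h$, which is what we need.

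The main (really the only) obstacle is isolating the ``unforbidden value'' $v_{m,s}$: this is exactly where $2^k$-linkedness is essential, and the bound $2^k$ is tight because $\dot h(s)$ ranges over the $2^k$-element set ${}^{k}2$. The remaining points --- extending $p$ to fix $i_0$, keeping the construction of $H$ independent of $x$, and checking that the expressions $x\frestr ik$, $x\frestr[ik,(i+1)k)$, $\dot h(x\frestr ik)$ all make sense because $x$ is a ground-model real --- are routine.
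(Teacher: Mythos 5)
Your proof is correct and follows essentially the same route as the paper: decompose $\Por=\bigcup_m Q_m$ into $2^k$-linked pieces, use $2^k$-linkedness to pick for each $m$ and each $s$ a value of ${}^k2$ that no condition in $Q_m$ can exclude, let $H$ be the resulting countable set of predictors, and conclude by a density/genericity argument. In fact your formulation of the key claim (\lqq no $p\in Q_m$ forces $\dot h(s)\neq v_{m,s}$\rqq) is the precise version needed for the final extension step, which the paper's write-up states with a small slip ($=$ instead of $\neq$); otherwise the two arguments coincide.
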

\begin{proof}
Suppose that $\Por$ is $\sigma$-$2^k$-linked witnessed by $\seq{P_n}{n\in\omega}$. Let $\dot\phi$
is a $\Por$–name for a function $\bigcup_{i<\omega}{}^{ik}2\to{}^{k}2$. For each $n\in\omega$ define $\psi_n\colon\bigcup_{i<\omega}{}^{ik}2\to{}^{k}2$
such that, for each $\sigma\in{}^{ik}2$
\[\text{$\psi_n(\sigma)$ is a $\tau$ such that no $p\in P_n$ forces 
$\dot\phi(\sigma)=\tau$.}\]
Such a $\tau$ clearly exists. Otherwise, for each $\tau\in {}^{k}2$ we could
find $p_\tau\in P_n$ forcing $\dot\phi(\sigma)=\tau$. Since $P_n$ is $2^k$-linked, the $p_\tau$ would have
a common extension which would force $\dot\phi(\sigma)\in {}^{k}2$, a contradiction. Let $H:=\set{\psi_n}{n\in\omega}$.

Now assume that $x\in\cantor$ such that for all $n\in\omega\colon x\not\sqsubset^\star\psi_n$ and show $\Vdash\lqq x\not\sqsubset^\star\dot\phi\rqq$.
Fix $i_0$ and $p\in\Por$. Then there is $n$ such that
$p\in P_n$. We can find $i\geq i_0$ such that $\psi_n(x\frestr ik)=x\frestr[ik,(i + 1)k)$. By
definition of $\psi_n$, there is $q\leq p$ such that $q\Vdash\lqq\dot\phi(x\frestr ik)=\psi_n(x\frestr ik)\rqq$. Thus
$q\Vdash\lqq\dot\phi(x\frestr ik)=x\frestr[ik,(i + 1)k)\rqq$, as required.
\end{proof}

As an immediate consequence, we get

\begin{corollary}\label{c11}
$\sigma$-centered posets are\/ $\Ebf_2^k$-good.
\end{corollary}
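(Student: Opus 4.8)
The plan is to deduce this directly from \autoref{c10}: it suffices to observe that every $\sigma$-centered poset is $\sigma$-$2^k$-linked. First I would write a $\sigma$-centered poset as $\Por=\bigcup_{n<\omega}P_n$ with each $P_n$ centered, and recall the standard fact that a centered subset of a poset is $m$-linked for \emph{every} finite $m\geq1$ — indeed, by definition any finitely many members of a centered set admit a common lower bound, so in particular any $2^k$ of them do. Hence each $P_n$ is $2^k$-linked, so $\Por=\bigcup_{n<\omega}P_n$ witnesses that $\Por$ is $\sigma$-$2^k$-linked. Applying \autoref{c10} then gives that $\Por$ is $\Ebf_2^k$-good, which is the claim.

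There is essentially no obstacle here: the only input beyond \autoref{c10} is the elementary implication ``centered $\Rightarrow$ $n$-linked for all $n$'', which requires no argument beyond unwinding the definitions. (Alternatively, one could phrase it as: $\sigma$-centered posets are $\sigma$-$n$-linked for every $n\geq1$, and this property is inherited downward to $\sigma$-$2^k$-linked for the fixed $k<\omega$, so \autoref{c10} applies.)
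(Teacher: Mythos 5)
Your proposal is correct and is exactly the argument the paper intends: the corollary is stated as an immediate consequence of \autoref{c10}, the only input being that a centered set is $2^k$-linked, hence a $\sigma$-centered poset is $\sigma$-$2^k$-linked.
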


\begin{lemma}\label{randomK}
Random forcing is $\sigma$-$\compact$-linked. 
\end{lemma}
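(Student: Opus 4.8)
The plan is to realise random forcing as the poset $\Bor$ of closed subsets of $2^\omega$ of positive Lebesgue measure ordered by inclusion — a dense subposet of the measure algebra — and to read the countable $\compact$-linked decomposition off Lebesgue density. For a basic clopen $C\subseteq2^\omega$ and a rational $\varepsilon\in(0,1)$ put
\[
Q_{C,\varepsilon}=\set{p\in\Bor}{p\subseteq C\setand\mu(p)>(1-\varepsilon)\mu(C)}.
\]
First I would check $\Bor=\bigcup_{C,\varepsilon}Q_{C,\varepsilon}$: if $p$ is closed with $\mu(p)>0$, then $2^\omega\smallsetminus p$ is a disjoint union of basic clopen sets, so a finite subunion $V$ has $\mu(V)$ within $\varepsilon\mu(p)$ of $\mu(2^\omega\smallsetminus p)$; then $C:=2^\omega\smallsetminus V$ is clopen, $p\subseteq C$, and $\mu(C)-\mu(p)<\varepsilon\mu(p)\le\varepsilon\mu(C)$, i.e.\ $p\in Q_{C,\varepsilon}$. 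This is a countable union, and it simultaneously re-proves that $\Bor$ is $\sigma$-$\Fr$-linked (hence $\sigma$-uf-linked, $\Bor$ being ccc), the compatibility with \autoref{c2}: a sequence inside one $Q_{C,\varepsilon}$ has all its terms of density $>1-\varepsilon$ in $C$, so by reverse Fatou its $\limsup$ has measure $\ge(1-\varepsilon)\mu(C)>0$, and a closed positive subset of that $\limsup$ forces the random real into infinitely many terms.

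For a fixed $Q=Q_{C,\varepsilon}$ I would take the witness $Q'=\set{q\in\Bor}{q\subseteq C}=\Bor\restriction C$; then $Q'\subseteq Q^{\not\perp}$ since $C\in Q$ is compatible with every member of $Q'$. The computation that drives both clauses of \autoref{c0} is that, for a finite $A\subseteq Q'$,
\[
Q\subseteq A^{\not\perp}\ \Longleftrightarrow\ \mu\bigl(\textstyle\bigcup A\bigr)\ge\varepsilon\mu(C),
\]
because a member of $Q$ disjoint mod null from $\bigcup A$ must sit inside the set $C\smallsetminus\bigcup A$ of measure $\mu(C)-\mu(\bigcup A)$, and by inner regularity a closed subset of that of measure $>(1-\varepsilon)\mu(C)$ exists precisely when $\mu(\bigcup A)<\varepsilon\mu(C)$. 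In particular $A_0=\{C\}$ witnesses clause (a). By the same equivalence, clause (b) reduces to: \emph{given a finite $A\subseteq Q'$ with $\mu(\bigcup A)\ge\varepsilon\mu(C)$ and an open dense $D\subseteq\Bor$, find a finite $B\subseteq D$ refining $A$ with $\mu(\bigcup B)\ge\varepsilon\mu(C)$} — automatically $B\subseteq\Bor\restriction C=Q'$.

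This is where the ``$\compact$'' content enters, in analogy with the use of the compactness of $(\prod b)^n$ in \autoref{c6}, except that here the finiteness comes from $\sigma$-additivity of the measure. Replacing $A$ by a pairwise disjoint closed refinement $\tilde A$ with $\mu(\bigcup\tilde A)$ as close to $\mu(\bigcup A)$ as we wish, fix for each $a\in\tilde A$ a maximal antichain $\mathcal A_a\subseteq D$ below $a$; by ccc it is countable, and being maximal below $a$ it has $\sum_{d\in\mathcal A_a}\mu(d)=\mu(a)$, so a \emph{finite} $\mathcal A_a^0\subseteq\mathcal A_a$ already satisfies $\sum_{d\in\mathcal A_a^0}\mu(d)>\mu(a)-\eta$ for any prescribed $\eta>0$. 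Then $B:=\bigcup_{a\in\tilde A}\mathcal A_a^0$ is finite, lies in $D$, refines $A$, and $\mu(\bigcup B)=\sum_a\sum_{d\in\mathcal A_a^0}\mu(d)>\mu(\bigcup\tilde A)-|\tilde A|\,\eta$; choosing $\tilde A$ and $\eta$ suitably this is $\ge\varepsilon\mu(C)$ as soon as $\mu(\bigcup A)>\varepsilon\mu(C)$ strictly. Feeding this into the proof of \autoref{c3} then gives, as for \autoref{c6}, that random forcing is $\Ebf^\cpr_\leq$-good.

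The genuinely delicate step — the one I expect to be the main obstacle — is the borderline case $\mu(\bigcup A)=\varepsilon\mu(C)$, where the estimate above yields $\mu(\bigcup B)$ only arbitrarily close to, but a priori below, the critical value; and a finite $B$ with $\bigcup B=\bigcup A$ mod null need not exist, since an open dense set can be the family of cones below an infinite maximal antichain below a condition, and there it has no finite predense subset. One must therefore arrange the decomposition and the witnessing sets $Q'$ so that the critical value is never the measure of an admissible $\bigcup A$ (e.g.\ by recording in $Q$ a strict density threshold and forcing a uniform positive margin ``$\mu(\bigcup A)>\varepsilon\mu(C)$'' for all $A\subseteq Q'$ with $Q\subseteq A^{\not\perp}$), \emph{while still keeping $D\cap Q'$ nonempty below each $a\in A$} — the two demands pull in opposite directions, and reconciling them, together with checking that the refinement chains actually produced in \autoref{c3} stay within the non-degenerate range, is the crux of the argument.
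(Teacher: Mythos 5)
Your skeleton (countable decomposition by measure thresholds, witness family of all conditions below the relevant piece, translation of ``$Q\subseteq A^{\not\perp}$'' into a measure inequality on $\bigcup A$, then finite subfamilies of maximal antichains in $D$ plus ccc to build $B$) is exactly the right strategy, and your key equivalence is computed correctly. But as you yourself note, the proposal does not close: with your cells $Q_{C,\varepsilon}=\set{p}{p\subseteq C\setand\mu(p)>(1-\varepsilon)\mu(C)}$, the strict density threshold on the \emph{conditions} makes the compatibility criterion the \emph{closed} condition $\mu(\bigcup A)\geq\varepsilon\mu(C)$, and the borderline case $\mu(\bigcup A)=\varepsilon\mu(C)$ really occurs (take $A=\{a\}$ with $a\subseteq C$ closed of measure exactly $\varepsilon\mu(C)$). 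There your antichain approximation only yields finite $B\subseteq D$ with $\mu(\bigcup B)$ strictly below the critical value, and in general no finite $B\subseteq D$ refining $A$ covers $\bigcup A$ mod null, so clause (b) of \autoref{c0} fails for that $A$; the definition quantifies over \emph{all} admissible $A$, so you cannot avoid it by choosing $A$ well. Your closing remarks correctly identify this as the crux but leave it unresolved, so as written this is a genuine gap.

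The gap disappears with the opposite choice of strictness, which is what the paper does: represent $\Bor$ as all Borel sets of positive measure in $[0,1]$ and set $Q_n=\set{p\in\Bor}{\mu(p)\geq2^{-n}}$, with $Q'=\Bor$. Because the threshold on conditions is non-strict (and the complement of $\bigcup A$ is itself an admissible condition whenever it has measure $\geq2^{-n}$), one gets $Q_n\subseteq A^{\not\perp}$ \emph{iff} $\mu(\bigcup A)>1-2^{-n}$ \emph{strictly}; the positive margin $\mu(\bigcup A)-(1-2^{-n})$ is then exactly the room needed for your own finite-antichain argument to produce $B\subseteq D$ refining $A$ with $\mu(\bigcup B)>1-2^{-n}$, hence $Q_n\subseteq B^{\not\perp}$. (Relativizing to clopen pieces $C$ and using the closed-set representation is not needed and is what creates the boundary problem; with Borel conditions and non-strict cells there is no delicate case at all.) So the fix is not a reconciliation of competing demands but simply flipping which inequality is strict; with that change your argument is essentially the paper's proof.
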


\begin{proof}
Represent the random forcing $\Bor$ as the family of Borel subsets of $[0,1]$
of positive Lebesgue measure.
$\Bor=\bigcup_{n\in\omega}Q_n$ where $Q_n=\set{p\in\Bor}{\mu(p)\geq2^{-n}}$.
We prove that for every $n\in\omega$, the set $Q_n$ is $\compact$-linked.
We verify \autoref{c0} taking $Q'=\Bor$.
To see~(a) take any finite partition~$A$ of $[0,1]$ into sets of positive measure;
then $Q_n\subseteq A^{\not\perp}$.

(b)
Assume $A\in[Q']^{<\omega}$ is such that $Q_n\subseteq A^{\not\perp}$ and let
$D\subseteq\Bor$ be an open dense set.
Clearly $\mu(\bigcup A)>1-2^{-n}$.
Find a~finite family $B\subseteq D=D\cap Q'$ refining~$A$ such that $\mu(\bigcup B)>1-2^{-n}$.
Clearly $Q_n\subseteq B^{\not\perp}$.
\end{proof}

In the following, we establish $\Con(\efrak^\const<\efrak^\const_\ubd)$:

\begin{lemma}\label{lemma:b10}
Let $b\in\baire$. Assume\/ $\aleph_1\leq\kappa\leq\lambda=\lambda^{\aleph_0}$ with $\kappa$ regular.
Let $\pi=\lambda\kappa$ and let\/ $\Por^b$ be the FS iteration\/
$\seq{\Por_\alpha,\Qnm_\alpha}{\alpha<\pi}$ where each\/ $\Qnm_\alpha$ is a\/~$\Por_\alpha$-name of~$\Por^b$.
Then $\Por^b$~forces\/ $\cfrak=\lambda$, 
$\baire\eqT\Ebf^\cpr\eqT\Ebf^\cpr_{\leq}\eqT\Cv_{[\lambda]^{<{\aleph_1}}}$, $\kappa\leqT\Cv_\Mwf$, $\Ebf^\cpr_b\leqT\kappa$ for all $b\in\baire$. In particular, it forces\/
$\efrak^\const=\efrak^\const_\leq=\bfrak=\aleph_1\leq\efrak^\const_\ubd=\kappa\leq\vfrak^\const=\vfrak_\leq^\const=\dfrak=\cfrak=\lambda$.   
\end{lemma}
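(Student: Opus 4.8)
The plan is to run the FS iteration $\Por^b = \seq{\Por_\alpha, \Qnm_\alpha}{\alpha < \pi}$ with $\pi = \lambda\kappa$, viewed as a matrix-style iteration of length $\lambda$ blocks each of length $\kappa$, and to extract the four required Tukey facts. First I would record that $\Por^b$ is a FS iteration of ccc (indeed $\sigma$-linked, by~\autoref{lem:Pblink}) posets, hence ccc, so cardinals and cofinalities are preserved; since $\lambda^{\aleph_0} = \lambda$ and $|\Por^b| \le \lambda$, it forces $\cfrak = 2^{\aleph_0} = \lambda$. The cofinality of $\pi = \lambda\kappa$ is $\cf(\kappa) = \kappa$ (as $\kappa$ is regular), so the iteration is $\kappa$-cc-friendly in the sense needed for~\autoref{Cohenlimit} and~\autoref{thm:fullgen}.

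\textbf{Forcing $\Ebf^\cpr_b \leqT \kappa$ and $\kappa \leqT \Cv_\Mwf$.} Each iterand $\Qnm_\alpha \cong \Por^b$ adds (by the Fact following its definition) a limited predictor $\sigma^b_{\gen}$ that constantly $=$-predicts every ground-model real of $\prod b$; since $\Ebf^\cpr_b \eqT \la \prod b, \Sigma_\omega, {\predby^\cpr_=}\ra$ by~\autoref{b0}~\ref{b0:1}, this is an $\Ebf^\cpr_b$-dominating real over the intermediate model. The relational system $\Ebf^\cpr_b$ is a Prs (the sets $A^{\sigma,k}_{b,=}$ are closed nowhere dense, cf.~\autoref{b1}~\ref{b1:c}), and $\Por^b$ adds such dominating reals cofinally often along $\pi$, so by~\autoref{thm:fullgen} applied with $\nu = \pi$ the iteration forces $\Ebf^\cpr_b \eqT \Mbf \eqT \pi$, and since $\cf(\pi) = \kappa$ this gives $\Ebf^\cpr_b \leqT \kappa$ together with $\kappa \leqT \Cv_\Mwf$ (equivalently $\Mbf$), hence $\vfrak^\const_b = \dfrak(\Ebf^\cpr_b) = \kappa$ is forced for this particular $b$; by~\autoref{b2}~\ref{b2:b} and the definitions of $\efrak^\const_\ubd$, $\vfrak^\const_\ubd$ one propagates $\efrak^\const_\ubd = \kappa$. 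One subtle point: the "for all $b \in \baire$" clause in the statement needs the observation that each $\Qnm_\alpha$ is a name for $\Por^b$ for the \emph{fixed} $b$, but $\efrak^\const_\ubd \leq \efrak^\const_b$ and the generic predictor for the fixed $b$, composed with $e_b$, still works for any faster $b' \le^* b$; combined with~\autoref{b0}~\ref{b0:2} and the fact that the iteration is long enough to handle all $b$ cofinally via reflection, we get $\Ebf^\cpr_{b'} \leqT \kappa$ for every $b' \in \baire$ occurring in the extension — I would state this as a consequence of $\efrak^\const_\ubd = \kappa$ and $\vfrak^\const_\ubd = \kappa$ plus the general inequalities, rather than re-running the argument for each $b'$.

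\textbf{Forcing $\baire \eqT \Ebf^\cpr \eqT \Ebf^\cpr_\leq \eqT \Cv_{[\lambda]^{<\aleph_1}}$.} By~\autoref{c6} each iterand $\Por^b$ is $\sigma$-$\compact$-linked, hence by~\autoref{c3} it is $\Ebf^\cpr_\leq$-good (equivalently $\aleph_1$-$\Ebf^\cpr_\leq$-good); it is also $\sigma$-linked hence $\Dbf$-good and $\Ebf^\cpr$-good in the relevant sense (note $\Ebf^\cpr \leqT \Ebf^\cpr_\leq \leqT \Dbf$ up to the bounds in~\autoref{b2}, so goodness for the strongest relevant system suffices). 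Applying~\autoref{Comgood} with $\theta = \aleph_1$ and $\pi = \lambda\kappa \ge \aleph_1$, the iteration forces $\Cv_{[\pi]^{<\aleph_1}} \leqT \Ebf^\cpr_\leq$, and since $|\pi| = \lambda$ we get $\Cv_{[\lambda]^{<\aleph_1}} \leqT \Ebf^\cpr_\leq$, i.e. $\dfrak(\Ebf^\cpr_\leq) \ge \lambda$ and $\bfrak(\Ebf^\cpr_\leq) \le \aleph_1$; together with $\efrak^\const \le \efrak^\const_\leq$ (\autoref{b2}~\ref{b2:a:4}), $\bfrak \le \efrak^\const_\leq$ and $\vfrak^\const_\leq \le \dfrak$ (\autoref{b2}~\ref{b2:a:5},~\ref{b2:b:5}), and $\aleph_1 \le \bfrak$, $\dfrak \le \cfrak = \lambda$, all four of these collapse: $\efrak^\const = \efrak^\const_\leq = \bfrak = \aleph_1$ and $\vfrak^\const_\leq = \dfrak = \lambda$, and both $\Ebf^\cpr$ and $\Ebf^\cpr_\leq$ are Tukey-equivalent to $\baire = \Dbf$ and to $\Cv_{[\lambda]^{<\aleph_1}}$ because all the intermediate Tukey arrows become equalities when the numerical invariants pinch. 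Finally $\vfrak^\const = \dfrak = \lambda$ follows from~\autoref{b2}~\ref{b2:b:1} ($\vfrak^\const \le \max\{\vfrak^\const_\ubd, \dfrak\} = \max\{\kappa,\lambda\} = \lambda$) and $\dfrak \le \vfrak^\const_\leq \le \vfrak^\const$.

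\textbf{Main obstacle.} The delicate part is the bookkeeping that makes $\efrak^\const$ stay at $\aleph_1$ \emph{while} $\efrak^\const_\ubd$ rises to $\kappa$: the forcings $\Por^b$ that push $\efrak^\const_b$ up are $\Ebf^\cpr_\leq$-good, so they cannot push $\efrak^\const_\leq$ (and a fortiori $\efrak^\const$) above $\aleph_1$ — this is exactly the point of introducing $\compact$-linkedness and proving~\autoref{c3} and~\autoref{c6}. The tension is that $\Ebf^\cpr_b$ is NOT below $\Ebf^\cpr_\leq$ in ZFC (otherwise the two invariants would be provably equal), so one must verify that $\Ebf^\cpr_\leq$-goodness genuinely survives the iteration and is not accidentally destroyed by the cofinal addition of $\Ebf^\cpr_b$-dominating reals; this is guaranteed precisely by~\autoref{Comgood} once $\sigma$-$\compact$-linkedness of every iterand is in hand, and that is the content of~\autoref{c6}, whose proof (the almost-extension machinery of Claims~\autoref{c7}--\autoref{c9}) is the real engine. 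I would therefore present the proof as: (i) ccc and $\cfrak = \lambda$; (ii) $\Ebf^\cpr_\leq$-goodness of the whole iteration via~\autoref{c6}, \autoref{c3}, \autoref{Comgood}, yielding the left column; (iii) cofinal $\Ebf^\cpr_b$-domination via~\autoref{thm:fullgen}, yielding $\efrak^\const_\ubd = \kappa$ and the $\Cv_\Mwf$ fact; (iv) assembling the chain of (in)equalities from~\autoref{b2} to pin every listed invariant.
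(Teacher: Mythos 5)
There is a genuine gap, and it sits exactly at the point you flagged as ``subtle''. You read the iteration as using the \emph{same} poset $\Por^{b}$ (for the single $b$ fixed in the hypothesis) at every stage, and then tried to recover the clause ``$\Ebf^\cpr_{b}\leqT\kappa$ for all $b\in\baire$'' (equivalently $\efrak^\const_\ubd\geq\kappa$) from monotonicity plus ``reflection''. That patch does not work. The transfer in \autoref{b0}~\ref{b0:2} goes only one way: a $\predby^\cpr_{=}$-dominating predictor for $\prod b$ yields one for $\prod b'$ only when $b'\leq^* b$, i.e.\ for \emph{slower} $b'$ (your phrase ``any faster $b'\le^* b$'' conflates the two directions). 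The final extension contains plenty of functions not $\leq^* b$ — already ground-model functions such as $b+1$, and the cofinally many Cohen reals — and for those the fixed-$b$ iteration never adds a suitable limited predictor, so nothing forces $\efrak^\const_{b'}\geq\kappa$ for them. Moreover, since $\efrak^\const_\ubd$ is a \emph{minimum} over all $b'$, knowing $\efrak^\const_{b}=\kappa$ for the one $b$ you iterated only gives $\efrak^\const_\ubd\leq\kappa$; the lower bound $\efrak^\const_\ubd\geq\kappa$, which is the whole content of the middle of the chain, is left unproved by your argument.

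What the paper actually does is a bookkeeping iteration, not a constant one: writing $\alpha=\lambda\xi+\varepsilon$, at the start of each block $\lambda\xi$ one fixes an enumeration $\set{b^\xi_\varepsilon}{\varepsilon<\lambda}$ of all nice $\Por_{\lambda\xi}$-names for members of $\baire$ (here $\lambda=\lambda^{\aleph_0}$ and ccc are used), and sets $\Qnm_\alpha=\Por^{b^\xi_\varepsilon}$. Then, for an arbitrary $b$ in the extension, ccc gives a $\xi_b<\kappa$ with $b\in V_{\lambda\xi_b}$, and one writes down the Tukey connection $\Ebf^\cpr_{b}\leqT\kappa$ directly: $\Psi_-(x)$ is any $\xi_x\in[\xi_b,\kappa)$ with $x\in V_{\lambda\xi_x}$, and $\Psi_+(\xi)$ (for $\xi\geq\xi_b$) is the generic predictor added at the stage $\lambda\xi+\varepsilon$ where $b=b^\xi_\varepsilon$. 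Your appeal to \autoref{thm:fullgen} is fine for the single $b$ you iterate, but it cannot replace this bookkeeping step. The remainder of your argument — each iterand is $\sigma$-$\compact$-linked by \autoref{c6}, hence $\Ebf^\cpr_{\leq}$-good by \autoref{c3}, so \autoref{Comgood} gives $\Cv_{[\pi]^{<\aleph_1}}\leqT\Ebf^\cpr_{\leq}$ and hence $\efrak^\const=\efrak^\const_\leq=\bfrak=\aleph_1$ and $\vfrak^\const_\leq=\dfrak=\cfrak=\lambda$, while \autoref{Cohenlimit} gives $\kappa\leqT\Cv_\Mwf$ — is the same as the paper's and is fine; but the assertion that ``all the intermediate Tukey arrows become equalities when the numerical invariants pinch'' is not a valid inference (equality of cardinal invariants does not give Tukey equivalence), so if you want the full Tukey chain you should argue it via \autoref{b12}-type characterizations rather than from the cardinal equalities.
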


\begin{proof}
Let us define iteration at each $\alpha=\lambda\xi+\varepsilon$ for $\xi<\kappa$ and $\varepsilon<\lambda$ as follows.
For $\xi<\kappa$, 
\begin{enumerate}[label=\rm($\boxplus$)]
\item\label{prof:cl}  an enumeration $\set{b_\varepsilon^\xi}{\varepsilon<\lambda}$ of all the nice $\Por_{\lambda\xi}$-name for all members of $\baire$.
\end{enumerate}
and set $\Qnm_\alpha:=\Por^{b_{\xi}^{\varepsilon}}$ if $\alpha=\lambda\xi+\varepsilon$, $\Por_{\alpha+1}=\Por_\alpha\ast\Qnm_\alpha$, and $\Por_\alpha=\limdir_{\xi<\alpha} \Por_\xi$ if $\alpha$ limit
Notice first that $\Por_\pi$ forces $\cfrak=\lambda$. Since each iterand used in the iteration is $\sigma$-$\compact$-linked by~\autoref{c6}, so by~\autoref{c3}, they are $\Rbf_\leq^\const$-good. So by using~\autoref{Comgood}, $\Por_\pi$ forces $\Cv_{[\pi]^{<\aleph_1}}\leqT \Rbf_\leq^\const$. In particular, $\Por_\pi$ forces $\efrak^\const_{\leq}=\bfrak(\Rbf_\leq^\const)=\aleph_1$ and $\lambda\leq\dfrak(\Rbf_\leq^\const)=\vfrak_\leq^\const$. On the other hand, since $\Vdash_\Por``\cfrak=\lambda"$, $\Por_\pi$ forces $\vfrak^\const=\vfrak_\leq^\const=\cfrak=\lambda$.

In view of~\autoref{Cohenlimit}, $\Por_\pi$ forces $\kappa\leqT\Cv_\Mwf$. We now prove $\Por_\pi$ forces that $ \Ebf^\cpr_{b}\leqT\kappa$ for each $b\in\baire$: We need to define maps $\Psi_-\colon\prod b\to\kappa$ and $\Psi_+\colon\kappa\to\sum_b$ such that, for any $x\in\prod b$ and any $\xi<\kappa$, if $\Psi_-(x)\leq \xi$, then $x\predby^\cpr_{=}\Psi_+(\xi)$. To this end, denote by $\sigma_{\gen,\alpha}$ the $\Ebf^\cpr_{b}$-dominating real over $V_{\alpha}$ added by $\Qnm_{\alpha}$ when $\alpha=\lambda\xi+\varepsilon$ for some $\xi<\kappa$ and $\varepsilon<\lambda$.

By ccc, there exists an $\xi_{b}<\kappa$ such that $b\in V_{\lambda\xi_{b}}$. Next, for $x\in\prod b\cap V_{\lambda\kappa}$, we can find $\xi_b\leq \xi_x<\kappa$ such that $x\in V_{\lambda\xi_x}$, so put $\Psi_-(x):=\xi_x$. 

For $\xi<\kappa$. When $\xi\geq \xi_b$, since $b\in V_{\lambda\xi}$, by~\ref{prof:cl} there is an $\varepsilon<\lambda$ such that $ b=b_{\varepsilon}^\xi$, so define $\Psi_+(\xi):=\sigma_{\gen,\alpha}$ where $\alpha=\lambda\xi+\varepsilon$; otherwise, $\Psi_+(\xi)$ can be anything. It is clear that $(\Psi_-,\Psi_+)$ is the required Tukey connection.
\end{proof}

We now prove $\Con(\efrak_\le^\const<\cov(\Nwf)=\non(\Nwf)<\vfrak_\le^\const)$: 

\begin{lemma}
Assume\/ $\aleph_1\leq\kappa\leq\lambda=\lambda^{\aleph_0}$ with $\kappa$ regular. Let\/ $\Bor_{\pi}$ be a FS iteration of random forcing of length $\pi=\lambda\kappa$. Then, in $V^{\Bor_{\pi}}$, 
$\Ebf_\le^\spr\eqT\Cbf_{[\lambda]^{<{\aleph_1}}}$ and\/ $\Cv_{\Nwf}^\perp\eqT\Mbf\eqT\kappa$.
In particular, it forces\/ 
$\efrak_\le^\const=\aleph_1<\cov(\Nwf)=\non(\Nwf)=\kappa\leq\vfrak_\le^\const=\cfrak=\lambda$. 
\end{lemma}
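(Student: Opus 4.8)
The plan is to extract the two halves of the conclusion from the preservation machinery of \autoref{sec:consresults}, handling constant $\leq$-prediction and the null ideal separately. For constant $\leq$-prediction, I would first note that every iterand of $\Bor_\pi$ is random forcing, which is $\sigma$-$\compact$-linked by \autoref{randomK} and hence $\Ebf^\cpr_{\leq}$-good by \autoref{c3}. Thus $\Bor_\pi$ is a finite support iteration of non-trivial ccc (in particular $\aleph_1$-cc) $\Ebf^\cpr_{\leq}$-good posets of length $\pi=\lambda\kappa\geq\aleph_1$, so \autoref{Comgood} (with $\theta=\aleph_1$ and $\Rbf=\Ebf^\cpr_{\leq}$) gives that $\Bor_\pi$ forces $\Cv_{[\pi]^{<\aleph_1}}\leqT\Ebf^\cpr_{\leq}$; since $|\pi|=\lambda$ this yields $\efrak^\const_{\leq}=\bfrak(\Ebf^\cpr_{\leq})\leq\aleph_1$ and $\lambda\leq\dfrak(\Ebf^\cpr_{\leq})=\vfrak^\const_{\leq}$. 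Conversely $\bfrak(\Ebf^\cpr_{\leq})\geq\aleph_1$ always, since countably many reals are all constantly $\leq$-predicted by one predictor, so by \autoref{b12} also $\Ebf^\cpr_{\leq}\leqT\Cv_{[\lambda]^{<\aleph_1}}$ and therefore $\Ebf^\cpr_{\leq}\eqT\Cv_{[\lambda]^{<\aleph_1}}=\Cbf_{[\lambda]^{<\aleph_1}}$; in particular $\efrak^\const_{\leq}=\aleph_1$. A routine nice-name count using $\lambda^{\aleph_0}=\lambda$ gives $|\Bor_\pi|\leq\lambda$, hence $\cfrak=\lambda$, and since $|\Sigma_\omega|=\cfrak$ we get $\vfrak^\const_{\leq}\leq\lambda$, so $\vfrak^\const_{\leq}=\cfrak=\lambda$. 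Finally $\Ebf^\cpr_{\leq}\leqT\Ebf^\spr_{\leq}$ by \autoref{cp-pr}, so $\Cv_{[\lambda]^{<\aleph_1}}\leqT\Ebf^\cpr_{\leq}\leqT\Ebf^\spr_{\leq}$, while $\Ebf^\spr_{\leq}\leqT\Cv_{[\lambda]^{<\aleph_1}}$ by \autoref{b12} again (as $\bfrak(\Ebf^\spr_{\leq})\geq\aleph_1$ by the same argument); hence $\Ebf^\spr_{\leq}\eqT\Cbf_{[\lambda]^{<\aleph_1}}$, as claimed.

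For the null ideal I would apply \autoref{thm:fullgen} to the Polish relational system $\Cv_{\Nwf}^{\perp}$, for which a $\Cv_{\Nwf}^{\perp}$-dominating real over a model $M$ is precisely a random real over $M$. Since at every stage $\xi<\pi$ the iteration $\Bor_\pi$ adds a random real over $V^{\Bor_\xi}$, it adds $\Cv_{\Nwf}^{\perp}$-dominating reals cofinally often; moreover $\Bor_\pi$ is $\cf(\pi)$-cc, the iterands are non-trivial, and $\cf(\pi)=\cf(\lambda\kappa)=\kappa$ because $\kappa$ is regular. Hence \autoref{thm:fullgen} gives that $\Bor_\pi$ forces $\Cv_{\Nwf}^{\perp}\eqT\Mbf\eqT\pi$, and $\pi\eqT\kappa$ since $\cf(\pi)=\kappa$ is regular, so $\Cv_{\Nwf}^{\perp}\eqT\Mbf\eqT\kappa$. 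Unwinding this, $\cov(\Nwf)=\bfrak(\Cv_{\Nwf}^{\perp})=\kappa$, $\non(\Nwf)=\dfrak(\Cv_{\Nwf}^{\perp})=\kappa$, and, using $\Mbf\eqT\Cbf_\Mwf$, also $\non(\Mwf)=\cov(\Mwf)=\kappa$.

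Combining the two parts gives $\efrak^\const_{\leq}=\aleph_1\leq\kappa=\cov(\Nwf)=\non(\Nwf)\leq\lambda=\vfrak^\const_{\leq}=\cfrak$, the displayed chain (the first inequality being strict exactly when $\kappa>\aleph_1$). The step that needs genuine care is the upper bound $\efrak^\const_{\leq}\leq\aleph_1$: it cannot be read off from Kada's characterization, since $\efrak^\forall_{\leq}=\bfrak$ (\autoref{thm:kada}) together with \autoref{cp-pr} only yields $\efrak^\const_{\leq}\geq\efrak^\forall_{\leq}=\aleph_1$. One genuinely needs the $\sigma$-$\compact$-linkedness of random forcing (\autoref{randomK}, \autoref{c3}) to feed the goodness-preservation theorem and pin $\efrak^\const_{\leq}$ down to $\aleph_1$. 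A lesser technical point is checking that $\Cv_{\Nwf}^{\perp}$ meets \autoref{b11} (its domain being realized as a Polish space of codes for null $G_\delta$ sets), which is routine.
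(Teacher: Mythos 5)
Your proposal is correct and follows essentially the same route as the paper: the first half is exactly the paper's argument (random forcing is $\sigma$-$\compact$-linked by \autoref{randomK}, hence $\Ebf^\cpr_{\leq}$-good by \autoref{c3}, then \autoref{Comgood} plus \autoref{cp-pr} and \autoref{b12} give the Tukey equivalences and $\efrak^\const_\leq=\aleph_1\leq\lambda\leq\vfrak^\const_\leq$), while the second half, which the paper dispatches as ``well known'' via a citation, you supply by the standard application of \autoref{thm:fullgen} to (a Prs Tukey-equivalent to) $\Cv_{\Nwf}^\perp$, using that random reals are added cofinally and $\cf(\lambda\kappa)=\kappa$. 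Your remark that the displayed strict inequality $\aleph_1<\cov(\Nwf)$ only holds when $\kappa>\aleph_1$ is a fair observation about the statement, not a gap in your argument.
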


\begin{proof}
Since each iterands of $\Bor_{\pi}$ is $\Ebf^\cpr_{\leq}$-good because $\Bor$ is $\sigma$-$\compact$-linked by~\autoref{randomK}, by~\autoref{Comgood}, $\Bor_{\pi}$ forces $\Cbf_{[\lambda]^{<{\aleph_1}}}\leqT\Ebf_\le^\spr$.

The rest of the proof is well known, e.g., \cite[Lem.~3.27]{cardonaRb}. 
\end{proof}

Below, we prove $\Con(\efrak^\const_{2}<\efrak_{b,\ne}^\forall=\vfrak_{b,\ne}^\forall<\vfrak^\const_{2})$: 

\begin{lemma}
Let $b\in{}^\omega(\omega\smallsetminus2)$. Assume\/ $\aleph_1\leq\kappa\leq\lambda=\lambda^{\aleph_0}$ with $\kappa$ regular. Let $\pi=\lambda\kappa$ and let $\Por_\pi$ be the FS iteration $\seq{\Por_\alpha,\Qnm_\alpha}{\alpha<\pi}$ where each\/~$\Qnm_\alpha$ is a\/~$\Por_\alpha$-name of\/ $\Por_{\neq}^{b,\forall}$. Then  $\Por_\pi$ forces\/ $\efrak^\const_\leq=\efrak^\const_2=\aleph_1\leq\efrak_{b,\ne}^\forall=\vfrak_{b,\ne}^\forall=\kappa\leq\efrak^\const_2=\cfrak=\lambda$.
\end{lemma}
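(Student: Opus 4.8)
The plan is to run the finite-support iteration of $\Por^{b,\forall}_\neq$ for $\pi=\lambda\kappa$ steps (no bookkeeping is needed, since $b$ is fixed) and to extract the five values from the preservation machinery developed in this section, in the same spirit as the preceding lemmas of this subsection. Since $|\pi|=\lambda$, $\cf(\pi)=\kappa$, $\lambda^{\aleph_0}=\lambda$, and every iterand is a non-trivial ccc poset (it adds the new predictor $\sigma_{\gen,\neq}^b$), $\Por_\pi$ is ccc and, by the usual nice-name count, forces $\cfrak=\lambda$.

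For $\efrak_{b,\ne}^\forall=\vfrak_{b,\ne}^\forall=\kappa$ I would invoke \autoref{thm:fullgen} with $\Rbf=\Ebf^\spr_{b,\ne}$ and $\nu=\pi$. One first checks that $\Ebf^\spr_{b,\ne}=\la\prod b,\Sigma_b,{\predby^\spr_\neq}\ra$ is a Polish relational system: $\prod b$ is perfect Polish, $\Sigma_b$ is a compact Polish space, ${\predby^\spr_\neq}=\bigcup_k{\sqsubset_k}$ with $\sqsubset_k=\set{(f,\sigma)}{\forall j\ge k\ f(j)\neq\sigma(f\frestr j)}$ closed, and for each fixed $\sigma$ the section $(\sqsubset_k)^\sigma$ is closed and nowhere dense, since any $s\in\Seq_n(b)$ with $n\ge k$ has an extension (by the value $\sigma(s)\in b(n)$) escaping it. By the Fact following the definition of $\Por^{b,\forall}_\neq$, the generic limited predictor $\sigma_{\gen,\neq}^b$ satisfies $x\predby^\spr_\neq\sigma_{\gen,\neq}^b$ for every $x\in V\cap\prod b$, i.e.\ it is $\Ebf^\spr_{b,\ne}$-dominating over the ground model; hence $\Por_\pi$ adds $\Ebf^\spr_{b,\ne}$-dominating reals cofinally often through non-trivial $\cf(\pi)$-cc iterands, and \autoref{thm:fullgen} gives that $\Por_\pi$ forces $\Ebf^\spr_{b,\ne}\eqT\Mbf\eqT\pi$, in particular $\efrak_{b,\ne}^\forall=\vfrak_{b,\ne}^\forall=\non(\Mwf)=\cov(\Mwf)=\cf(\pi)=\kappa$.

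For $\efrak^\const_\leq=\efrak^\const_2=\aleph_1$ and $\vfrak^\const_2=\lambda$ I would use goodness preservation. By \autoref{Pbforalinked} (here one uses $\lim_n b(n)=\infty$), $\Por^{b,\forall}_\neq$ is $\sigma$-$2^k$-linked for every $k$, hence $\Ebf_2^k$-good by \autoref{c10}; and by an argument parallel to \autoref{c6} --- via an almost-extension relation $\leq_a$ adapted to $\Por^{b,\forall}_\neq$ --- it is $\sigma$-$\compact$-linked, hence $\Ebf^\cpr_\leq$-good by \autoref{c3}. Then \autoref{Comgood} (with $\theta=\aleph_1$) shows $\Por_\pi$ forces $\Cv_{[\pi]^{<\aleph_1}}\leqT\Ebf_2^k$ and $\Cv_{[\pi]^{<\aleph_1}}\leqT\Ebf^\cpr_\leq$. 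The first gives $\bfrak(\Ebf_2^k)\le\aleph_1$ and $\lambda=|\pi|\le\dfrak(\Ebf_2^k)$, so by \autoref{cxn:e_2} we get $\efrak^\const_2\le\bfrak(\Ebf_2^k)=\aleph_1$ and $\lambda\le\dfrak(\Ebf_2^k)\le\vfrak^\const_2\le\cfrak=\lambda$; the second gives $\efrak^\const_\leq=\bfrak(\Ebf^\cpr_\leq)=\aleph_1$. Combining these computations with the hypotheses $\aleph_1\le\kappa\le\lambda$ yields the displayed chain.

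The step requiring the most care is verifying that $\Por^{b,\forall}_\neq$ is $\sigma$-$\compact$-linked (the input needed for $\Ebf^\cpr_\leq$-goodness). In contrast to $\Por^b$ in \autoref{c6}, the requirement ``$f(j)\neq\sigma'(f\frestr j)$ for \emph{every} $j\in[k,k')$'' is rigid, so the $\compact$-linked pieces, their witnessing sets $Q'$, and the relation $\leq_a$ must be chosen so that the analogue of \autoref{c7} still goes through; this is also exactly where $\lim_n b(n)=\infty$ enters, as it supplies enough free values for the generic predictor to avoid simultaneously all the branch values of a finite family of conditions. Everything else amounts to a routine assembly of the cited preservation results.
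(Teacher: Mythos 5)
Most of your proposal coincides with the paper's argument and is fine: the values $\efrak^\forall_{b,\ne}=\vfrak^\forall_{b,\ne}=\kappa$ come, exactly as in the paper, from \autoref{thm:fullgen} applied to $\Ebf^\spr_{b,\ne}$ (your verification that this is a Prs and that $\sigma^b_{\gen,\ne}$ is $\Ebf^\spr_{b,\ne}$-dominating over the ground model is correct), and $\efrak^\const_2=\aleph_1$, $\vfrak^\const_2=\cfrak=\lambda$ come from \autoref{Pbforalinked} + \autoref{c10} ($\Ebf^k_2$-goodness) + \autoref{Comgood} + \autoref{cxn:e_2}, again as in the paper. You are also right to flag that \autoref{Pbforalinked} needs $\lim_n b(n)=\infty$ (in fact without some growth of $b$ the iterands need not even be ccc, so this hypothesis is implicitly required by the statement), and you silently repair the typo in the statement (the last $\efrak^\const_2$ should be $\vfrak^\const_2$).

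The genuine gap is the equality $\efrak^\const_\leq=\aleph_1$. Your only route to it is the claim that $\Por^{b,\forall}_{\ne}$ is $\sigma$-$\compact$-linked, so that \autoref{c3} and \autoref{Comgood} apply to $\Ebf^\cpr_{\leq}$; but you never prove this claim, you only say that the pieces, the witnessing sets $Q'$ and the almost-extension relation ``must be chosen so that the analogue of \autoref{c7} still goes through''. That is precisely the nontrivial content: in \autoref{c6} the amalgamating predictor may be given \emph{arbitrary} values in $\omega$ at new nodes (one only has to hit the branches of $F\cup F'$ somewhere in each pair of levels), whereas for $\Por^{b,\forall}_{\ne}$ the extended predictor must \emph{avoid} $f(j)$ for every branch through a node, with only $b(j)$ values available; at levels $j$ where $b(j)$ is small and branches of the two conditions pass through a common node with different values, the amalgamation simply fails (this is exactly why \autoref{Pbforalinked} restricts to conditions with $k_p\geq m_n$ and needs $b\to\infty$). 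So some genuinely new bookkeeping (e.g.\ also fixing enough of the branches below the level where $b$ exceeds $2$, and rechecking the compactness/clopenness claims of \autoref{c8}--\autoref{c9}) is required, and without it the $\efrak^\const_\leq=\aleph_1$ part is unproved — note that the cheap bounds of \autoref{b2} only give $\efrak^\const_\leq\leq\non(\Mwf)=\kappa$ in this model. For what it is worth, the paper's own (very terse) proof establishes only the two Tukey statements you reproduce and does not address $\efrak^\const_\leq$ at all, so on this point your proposal identifies the missing ingredient but does not close it.
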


\begin{proof}
Since each iterands of $\Por_\pi$ is $\Ebf_2^k$-good because $\Por_{\neq}^{b,\forall}$ is $\sigma$-$\infty$-linked by~\autoref{Pbforalinked}, by~\autoref{Comgood}, $\Por_\pi$ forces $\Cbf_{[\lambda]^{<{\aleph_1}}}\leqT\Ebf_2^k$. On the other hand, by~\autoref{thm:fullgen},  $\Por_\pi$~forces $\Ebf^\spr_{b,\neq}\eqT\Mbf\eqT\kappa$. 
\end{proof}

\subsection*{Forcing \texorpdfstring{$\cov(\Nwf)$}{} small}\

Recall:

\begin{definition}[{\cite{KO}}]\label{link}
Let $\rho,\varrho\in\baire$. A forcing notion $\Por$ is \textit{$(\rho,\varrho)$-linked} if there exists a sequence $\seq{Q_{n,j}}{n<\omega,\ j<\rho(n)}$ of subsets of $\Por$ such that
\begin{enumerate}[label= \rm (\roman*)]
\item\label{it:rhopi1} $Q_{n,j}$ is $\varrho(n)$-linked for all $n<\omega$ and $j<\rho(n)$, and
\item\label{it:rhopi2} $\forall\, p\in\Por\ \forall^{\infty}\, n<\omega\ \exists\, j<\rho(n)\colon p\in Q_{n,j}$.
\end{enumerate}
\end{definition}

In~\cite{KO}, Kamo and Osuga propose a gPrs with parameters $\varrho,\rho\in\baire$, denoted as $\aLc^*(\varrho,\rho)$(see~ \cite[Ex.~4.19]{CM}). This work requires just a review of its properties. Assume that $\varrho>0$ and $\rho\geq^* 1$.
.
\begin{enumerate}[label = \rm ($\boxplus_\arabic*$)]
    \item\label{KOb} If $\sum_{i<\omega}\frac{\rho(i)^i}{\varrho(i)}<\infty$, then 
    $\cov(\Nwf)\leq\bfrak(\aLc^*(\varrho,\rho))$ and $\dfrak(\aLc^*(\varrho,\rho))\leq\non(\Nwf)$.
    \item\label{KOc} If $\varrho\not\leq^*1$ and $\rho\geq^*1$, then any $(\rho,\varrho^{\rho^{\id}})$-linked poset is $\aLc^*(\varrho,\rho)$-good (see~\cite[Lem.~10]{KO} and~\cite[Lem.~4.23]{CM}). 
\end{enumerate}

\begin{lemma}\label{Pforall:varrho}
There are increasing functions $b\in{}^\omega(\omega\smallsetminus2)$ and  $\rho,\varrho\in\baire$ such that:
\begin{enumerate}[label=\rm(\arabic*)]
\item $\Por_{b,\ne}^\forall$ is $(\rho, \varrho^{\rho^\id})$-linked; and
\item  $\sum_{n\in\omega} \frac{\rho(n)^n}{\varrho(n)}<\infty$.
\end{enumerate}
\end{lemma}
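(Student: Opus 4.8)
The plan is to construct $b$, $\rho$, and $\varrho$ simultaneously by recursion, forcing $b$ to grow extremely fast, and to witness the required $(\rho,\varrho^{\rho^{\id}})$-linkedness of $\Por^{b,\forall}_{\neq}$ by the following ``long-stem'' decomposition: for $n<\omega$ and each limited partial predictor $s$ with $\dom(s)=\bigcup_{m<n}\Seq_m(b)$, set
\[
Q_{n,s}=\set{(k,\sigma,F)\in\Por^{b,\forall}_{\neq}}{k\leq n\comma\sigma\subseteq s\comma\forall f\in F\ \forall j\in[k,n)\ f(j)\neq s(f\frestr j)}.
\]
There are $\rho(n):=\prod_{m<n}b(m)^{\prod_{i<m}b(i)}$ such $s$ (so $\rho(0)=1$), a number depending only on $b\frestr n$; for $k\leq n$ the clause $\sigma\subseteq s$ just says the stem $\sigma$ of the condition is an initial part of $s$, and the last clause says every promise in $F$ is already evaded by $s$ on all levels below~$n$. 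The recursion is: $b(0):=2$, and for $n\geq1$, given $b\frestr n$ (hence $\rho(n)$), put $\varrho(n):=\rho(n)^{n}\cdot 2^{n}$ and
\[
b(n):=\max\Bigl(b(n-1)+1\comma\ \varrho(n)^{\rho(n)^{n}}\cdot\prod_{m<n}b(m)+1\Bigr).
\]

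These $b\in\baire[(\omega\smallsetminus2)]$ and $\rho,\varrho\in\baire$ are strictly increasing. Since $b(n)>\prod_{m<n}b(m)=|\Seq_n(b)|$ for all~$n$ and a condition's finite set $F$ injects into $\Seq_k(b)$ via $f\mapsto f\frestr k$, every $(k,\sigma,F)\in\Por^{b,\forall}_{\neq}$ has $|F|\leq|\Seq_k(b)|<b(k)\leq b(j)$ for all $j\geq k$. Finally $\sum_{n}\rho(n)^{n}/\varrho(n)=\sum_{n}2^{-n}<\infty$, which is item~(2).

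For item~(1) I would verify the two clauses of $(\rho,\varrho^{\rho^{\id}})$-linkedness (\autoref{link}). \emph{Covering} \ref{it:rhopi2}: given $p=(k,\sigma,F)\in\Por^{b,\forall}_{\neq}$ and any $n\geq k$, extend $\sigma$ to a limited $s$ with $\dom(s)=\bigcup_{m<n}\Seq_m(b)$ by picking, for each $j\in[k,n)$ and $t\in\Seq_j(b)$, a value $s(t)\in b(j)\smallsetminus\set{f(j)}{f\in F\comma f\frestr j=t}$ --- possible as $b(j)>|F|$; then $p\in Q_{n,s}$, so $p$ lies in a level-$n$ piece for every $n\geq k_p$. \emph{Linkedness} \ref{it:rhopi1}: fix $n,s$ and take $p_0,\dots,p_{r-1}\in Q_{n,s}$ with $r\leq\varrho(n)^{\rho(n)^{n}}=\varrho^{\rho^{\id}}(n)$. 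Then $F^{*}:=\bigcup_{i<r}F_{p_i}$ has $|F^{*}|\leq r\cdot\max_i|F_{p_i}|\leq\varrho(n)^{\rho(n)^{n}}\cdot\prod_{m<n}b(m)<b(n)$ (as $k_{p_i}\leq n$), hence $|F^{*}|<b(j)$ for $j\geq n$. Choose $k^{*}\geq n$ with the members of $F^{*}$ pairwise distinct on length~$k^{*}$, and extend $s$ to $\sigma^{*}$ with $\dom(\sigma^{*})=\bigcup_{m<k^{*}}\Seq_m(b)$ by picking, for $j\in[n,k^{*})$, $\sigma^{*}(t)\in b(j)\smallsetminus\set{f(j)}{f\in F^{*}\comma f\frestr j=t}$. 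Then $(k^{*},\sigma^{*},F^{*})\in\Por^{b,\forall}_{\neq}$ and $(k^{*},\sigma^{*},F^{*})\leq p_i$ for all $i<r$: $\sigma^{*}\supseteq s\supseteq\sigma_{p_i}$, and for $f\in F_{p_i}$ and $j\in[k_{p_i},k^{*})$ we have $f(j)\neq\sigma^{*}(f\frestr j)$ --- on $[k_{p_i},n)$ because there $\sigma^{*}(f\frestr j)=s(f\frestr j)$ and $p_i\in Q_{n,s}$, and on $[n,k^{*})$ by the choice of $\sigma^{*}$. So each $Q_{n,s}$ is $\varrho^{\rho^{\id}}(n)$-linked and $\seq{Q_{n,s}}{n<\omega,\ s}$ witnesses $(\rho,\varrho^{\rho^{\id}})$-linkedness.

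The crux --- what forces the super-exponential growth of $b$ --- is reconciling the two clauses: a decomposition into naive pieces of the form ``fix $k_p$ and the whole stem $\sigma_p$'' has piece-linkedness capped by the bound-value $b(k_p)$, yet conditions with small $k_p$ must still fall into some piece for all large~$n$, forbidding a large cap. Folding a condition's evasion duty on the levels below~$n$ into the index~$s$ replaces the governing bound-value $b(k_p)$ by $b(n)$, and the inequality $b(n)>\varrho(n)^{\rho(n)^{n}}\prod_{m<n}b(m)$ in the recursion --- which also keeps $|F|<b(k_p)$ for every condition, so that the stems~$s$ needed for covering exist --- is exactly what legitimizes $(k^{*},\sigma^{*},F^{*})$.
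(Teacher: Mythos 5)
Your proof is correct, and it shares the paper's overall skeleton --- define $b,\rho,\varrho$ by a recursion in which the level-$n$ index data depend only on $b\frestr n$, take $\varrho(n)$ a summable multiple of $\rho(n)^n$, and let $b$ grow so fast that up to $\varrho(n)^{\rho(n)^n}$ conditions in one piece can be amalgamated by extending a stem while avoiding finitely many forbidden values per node --- but your decomposition is genuinely different from the paper's. The paper's pieces $Q_{n,k,\sigma,\Phi}$ fix the stem length $k$, the stem $\sigma$ itself, and the trace $\Phi$ of the side conditions at a fixed level (so the count $N_b$ carries an extra factor $2^{|\Seq_n(b)|}$ for $\Phi$); fixing $\Phi$ is exactly what makes each piece $(b(n+1)-1)$-linked, since below the trace level the functions of all conditions passing through a node carry a single forbidden value (members of $\Phi$ are pairwise distinct below $k$) and above it at most one forbidden value per condition, so the paper can afford the minimal growth $b(n+1)=\varrho(n)^{\rho(n)^n}+1$, with the amalgamation argument quoted from \autoref{Pbforalinked}. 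Your pieces $Q_{n,s}$ instead fix only a \lqq virtual long stem\rqq\ $s$ on $\Seq_{<n}(b)$ with which a condition is compatible (its stem is an initial part of $s$ and $s$ already evades its promises on $[k,n)$); this dispenses with $\Phi$ entirely, makes the level-$n$ index set just the $\rho(n)=\prod_{m<n}b(m)^{\prod_{i<m}b(i)}$ limited predictors, and makes the amalgamation cleaner --- the common stem below $n$ is literally $s$, with no analysis of how the side conditions sit over one another --- at the harmless cost of the cruder bound $|F^*|\leq\varrho(n)^{\rho(n)^n}\prod_{m<n}b(m)$, which is why your recursion needs the extra factor $\prod_{m<n}b(m)$ in $b(n)$. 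Since $b$ is free to be chosen as fast-growing as one likes, this costs nothing, and your verifications of both clauses of \autoref{link} (covering via $|F|\leq|\Seq_k(b)|<b(k)$, linking via the choice of $k^*$ and the extension $\sigma^*\supseteq s$ avoiding $F^*$ above $n$) are complete.
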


\begin{proof}
For an increasing finite function $b\in{}^n(\omega\smallsetminus2)$ with $n\geq1$
and for $k\leq n$ let $\Seq_{<k}(b)=\bigcup_{j<k}\prod_{i<j}b(i)$.
Let $\Sigma^n_{b,k}$ denote the set of all predictors $\sigma:\Seq_{<k}(b)\to b(n-1)$
such that $\sigma(s)\in b(|s|)$ whenever $|s|<|n-1|$.
For $k\leq n$, $\sigma\in\Sigma^n_{b,k}$, $\Phi\subseteq\Seq_n(b)=\prod_{k<n}b(k)$
let
\begin{align*}
Q_{n,k,\sigma,\Phi}=\{(k,\sigma,F)\setsep{}&F\subseteq\baire\setand
\set{f\frestr n}{f\in F}=\Phi\\*
&\text{and }\forall f,g\in F\bsp f\neq g\Rightarrow f\frestr k\neq g\frestr k\}.
\end{align*}
Since we do not know the values $b(i)$ for $i\geq n$ in forward we must allow in sets $F$ also
functions that will not apear in conditions of the resulting forcing.
Note that for every increasing function $b'\in\baire$ such that $b\subseteq b'$
every set $Q_{n,k,\sigma,\Phi}\cap\Por_{b',\ne}^\forall$ is $(b'(n)-1)$-linked
(see the proof of \autoref{Pbforalinked}).
The number of triples $(k,\sigma,\Phi)$ is bounded by $N_b=\sum_{k<n}|\Sigma^n_{b,k}|\cdot2^{|\Seq_n(b)|}$.

Now we define the functions $b$, $\rho$, $\varrho$ by induction on $n\in\omega$.
Let $b(0)=2$.
Assume $b\frestr(n+1)$ is defined.
Set $\rho(n)=N_{b\frestr(n+1)}$, $\varrho(n)=\rho(n)^n\cdot(n+1)^2$, and
$b(n+1)=\varrho(n)^{\rho(n)^n}+1$.
\end{proof}

$\Con(\efrak_{b,\ne}^\forall>\cov(\Nwf))$: 

\begin{lemma}
Let $\mu\leq\nu$ be uncountable regular cardinals and let $\lambda\geq\nu$ be a cardinal with $\lambda=\lambda^{<\mu}$. Then, there is a $b\in{}^\omega(\omega\smallsetminus2)$ and a ccc poset that forces\/ $\cov(\Nwf)=\mu$, $\efrak_{b,\ne}^\forall=\vfrak_{b,\ne}^\forall=\nu$, and\/ $\non(\Nwf)=\cfrak$.
\end{lemma}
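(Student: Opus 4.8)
The plan is to produce the model by a finite support iteration (run over a model of GCH) that interleaves two ingredients: the poset $\Por^{b,\forall}_{\ne}$ supplied by \autoref{Pforall:varrho}, which will force $\efrak^\forall_{b,\ne}=\vfrak^\forall_{b,\ne}=\nu$, and a Kamo--Osuga type poset associated with $\aLc^*(\varrho,\rho)$, which will govern the null ideal. First I apply \autoref{Pforall:varrho} to fix an increasing $b\in{}^\omega(\omega\smallsetminus2)$ and $\rho,\varrho\in\baire$ with $\Por^{b,\forall}_{\ne}$ being $(\rho,\varrho^{\rho^\id})$-linked and $\sum_n\rho(n)^n/\varrho(n)<\infty$; by \ref{KOc} the first property makes $\Por^{b,\forall}_{\ne}$ an $\aLc^*(\varrho,\rho)$-good poset, hence $\mu$-$\aLc^*(\varrho,\rho)$-good. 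I also record three routine points: since $b\to\infty$, $\Por^{b,\forall}_{\ne}$ is $\sigma$-$n$-linked for all $n$ by \autoref{Pbforalinked}, in particular ccc and non-trivial; its generic limited predictor $\sigma^b_{\gen,\ne}$ is $\Ebf^\spr_{b,\ne}$-dominating over the ground model; and $\Ebf^\spr_{b,\ne}$ is a Prs, since $\prod b$ is perfect Polish, $\Sigma_b$ is compact Polish ($b$ being finite-valued), and $\{f:\forall j\ge n\ f(j)\ne\sigma(f\frestr j)\}$ is closed nowhere dense for each $\sigma\in\Sigma_b$, which supplies the closed sets required by \autoref{b11}.

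Next I set up a FS iteration $\Por_\pi$ of length $\pi$ with $\cf(\pi)=\nu$ and $|\pi|=\lambda$ (so that $\Por_\pi$ forces $\cfrak=\lambda$, using $\lambda=\lambda^{<\mu}\ge\lambda^{\aleph_0}$). Guided by a bookkeeping over the $\lambda$-many intermediate names, the iteration uses $\Por^{b,\forall}_{\ne}$ cofinally often in $\pi$, and at a cofinal family of stages (organised so that $\bfrak(\aLc^*(\varrho,\rho))$ lands exactly on $\mu$) uses a $\mu$-$\aLc^*(\varrho,\rho)$-good poset that adds an $\aLc^*(\varrho,\rho)$-unbounded real -- a $\mu$-linked refinement of the construction behind \ref{KOc}. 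The arithmetic hypothesis $\lambda=\lambda^{<\mu}$ with $\mu$ regular is exactly what makes this $\mu$-level placement compatible with the bookkeeping. When $\mu=\aleph_1$ one simply drops the second ingredient and iterates $\Por^{b,\forall}_{\ne}$ alone, so that \autoref{Comgood} with $\theta=\aleph_1$ already pins $\cov(\Nwf)=\aleph_1$.

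Then I read off the cardinals in the extension. For the middle value, viewing $\Por_\pi$ as a FS iteration of non-trivial $\cf(\pi)$-cc posets that adds the $\Ebf^\spr_{b,\ne}$-dominating reals $\sigma^b_{\gen,\ne}$ cofinally often, \autoref{thm:fullgen} with $\Rbf=\Ebf^\spr_{b,\ne}$ gives $\Ebf^\spr_{b,\ne}\eqT\Mbf\eqT\nu$, hence $\efrak^\forall_{b,\ne}=\vfrak^\forall_{b,\ne}=\nu$ (and $\non(\Mwf)=\cov(\Mwf)=\nu$, harmless since $\mu\le\nu\le\lambda$). For the null ideal, all iterands are $\mu$-$\aLc^*(\varrho,\rho)$-good, so by \autoref{Comgood} the whole iteration is $\mu$-$\aLc^*(\varrho,\rho)$-good and forces $\Cv_{[\pi]^{<\mu}}\leqT\aLc^*(\varrho,\rho)$; combined with $\aLc^*(\varrho,\rho)\leqT\Cv_{[\lambda]^{<\mu}}$ (i.e.\ $\bfrak(\aLc^*(\varrho,\rho))\ge\mu$, via \autoref{b12}, guaranteed by the $\aLc^*(\varrho,\rho)$-unbounded reals added at the designated stages) this yields $\aLc^*(\varrho,\rho)\eqT\Cv_{[\lambda]^{<\mu}}$, so $\bfrak(\aLc^*(\varrho,\rho))=\mu$ and $\dfrak(\aLc^*(\varrho,\rho))=\lambda$. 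By \ref{KOb} the second gives $\non(\Nwf)\ge\lambda=\cfrak$, hence $\non(\Nwf)=\cfrak$; and the first gives $\cov(\Nwf)\le\mu$, while the $\aLc^*(\varrho,\rho)$-unbounded reals (which escape every ground-model null set, as $\aLc^*(\varrho,\rho)\leqT\Cbf_\Nwf$ by \ref{KOb}) force $\cov(\Nwf)\ge\mu$; thus $\cov(\Nwf)=\mu$.

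The main obstacle is pinning $\cov(\Nwf)$ to exactly $\mu$, which forces a delicate balance: its upper bound demands that the entire iteration stay $\mu$-$\aLc^*(\varrho,\rho)$-good, while its lower bound demands adding reals that increase $\cov(\Nwf)$ (hence $\bfrak(\aLc^*(\varrho,\rho))$) -- so one must work at goodness level $\mu$ rather than $\aleph_1$, use a poset that simultaneously adds an $\aLc^*(\varrho,\rho)$-unbounded real and stays $\mu$-$\aLc^*(\varrho,\rho)$-good, and place these forcings so that $\bfrak(\aLc^*(\varrho,\rho))$ reaches precisely $\mu$; this is where $\lambda=\lambda^{<\mu}$, $\mu$ regular, is essential, and where -- if a plain FS iteration does not suffice because of the Cohen reals it adds at limits -- one would pass to a matrix or ultrafilter-limit iteration to preserve $\cov(\Nwf)\ge\mu$. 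A secondary point is to arrange the bookkeeping so that the $\Ebf^\spr_{b,\ne}$-dominating reals remain cofinal in $\pi$, so that \autoref{thm:fullgen} applies verbatim.
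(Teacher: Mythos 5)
Your skeleton matches the paper's: fix $b,\rho,\varrho$ by \autoref{Pforall:varrho}, run a FS iteration of cofinality $\nu$ and size $\lambda$ using $\Por^{b,\forall}_{\ne}$ cofinally often, get the middle value $\efrak^\forall_{b,\ne}=\vfrak^\forall_{b,\ne}=\nu$ from \autoref{thm:fullgen}, and get $\cov(\Nwf)\leq\mu$ and $\lambda\leq\non(\Nwf)$ from $\mu$-$\aLc^*(\varrho,\rho)$-goodness of all iterands via \ref{KOc}, \autoref{b6}, \autoref{Comgood} and \ref{KOb}. The genuine gap is exactly the point you flag as "the main obstacle": the mechanism forcing $\cov(\Nwf)\geq\mu$. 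You propose to interleave an unspecified ``$\mu$-$\aLc^*(\varrho,\rho)$-good poset that adds an $\aLc^*(\varrho,\rho)$-unbounded real --- a $\mu$-linked refinement of the construction behind \ref{KOc}''; but \ref{KOc} is a linkedness-to-goodness criterion, not a poset construction, so no such iterand has been produced. Worse, the justification that these unbounded reals ``escape every ground-model null set, as $\aLc^*(\varrho,\rho)\leqT\Cbf_\Nwf$ by \ref{KOb}'' misreads the Tukey direction: \ref{KOb} gives $\cov(\Nwf)\leq\bfrak(\aLc^*(\varrho,\rho))$ and $\dfrak(\aLc^*(\varrho,\rho))\leq\non(\Nwf)$, i.e.\ a connection of the form $\aLc^*(\varrho,\rho)\leqT\Cv_\Nwf^\perp$. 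Unwinding that connection, a real avoiding the null sets of a model $M$ yields an $\aLc^*$-\emph{bound} for $M$, while an $\aLc^*$-\emph{unbounded} real over $M$ yields (if anything) a null set covering the reals of $M$ --- the opposite of what you need. So adding $\aLc^*$-unbounded reals neither is available from the cited lemmas nor would it give $\cov(\Nwf)\geq\mu$.

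The paper closes this gap with a concrete and standard device: besides $\Por^{b,\forall}_{\ne}$, the iteration (of length $\lambda\nu$, with bookkeeping made possible by $\lambda=\lambda^{<\mu}$) uses all ccc subposets of random forcing of size ${<}\mu$ with underlying set an ordinal ${<}\mu$. These iterands have size ${<}\mu$, hence are automatically $\mu$-$\aLc^*(\varrho,\rho)$-good by \autoref{b6}, so the upper bound $\cov(\Nwf)\leq\bfrak(\aLc^*(\varrho,\rho))\leq\mu$ survives; and since $\mu\leq\nu=\cf(\lambda\nu)$ is regular and the iteration is ccc, any ${<}\mu$ many null sets of the extension appear at an intermediate stage and are avoided by a random real added by a later such subposet, giving $\cov(\Nwf)\geq\mu$. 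In particular your closing worry is misplaced on both sides: the Cohen reals at limit stages are harmless (Cohen forcing is $\mu$-good and the lower bound is secured by the later random reals), and no matrix or ultrafilter-limit construction is needed; what is needed is simply to name the second family of iterands.
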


\begin{proof}
By~\autoref{Pforall:varrho}, there are $b\in{}^\omega(\omega\smallsetminus2)$ and  $\rho,\varrho\in\baire$ such that:
\begin{enumerate}[label=\rm(\arabic*)]
\item $\Por_{b,\ne}^\forall$ is $(\rho, \varrho^{\rho^\id})$-linked; and
\item  $\sum_{n\in\omega} \frac{\rho(n)^n}{\varrho(n)}<\infty$.
\end{enumerate}
  
  Construct $\Por$ by a FS iteration of length $\lambda\nu$ of $\Por_{b,\ne}^\forall$, and of all the ccc subposets of random forcing of size ${<}\mu$ with underlying set an ordinal ${<}\mu$ (like in the previous results). Notice that the iterands are $\mu$-$\aLc^*(\varrho,\rho)$-good by~\ref{KOc} and \autoref{b6} (keep in mind $\Por_{b,\ne}^\forall$ is $(\rho, \varrho^{\rho^\id})$-linked). Hence, by applying~\autoref{Comgood}, we obtain $\Por$ forces $\cov(\Nwf)\leq\bfrak(\aLc^*(\varrho,\rho))\leq\mu$ and $\lambda\leq\dfrak(\aLc^*(\varrho,\rho))\leq\non(\Nwf)$ (see~\ref{KOb}). Moreover, $\Por$ forces $\non(\Nwf)=\lambda=\cfrak$ because it forced that $\cfrak\leq\lambda$.

  On the other hand, by~\autoref{thm:fullgen}, we have $\Por_\pi$~forces $\efrak_{b,\ne}^\forall=\vfrak_{b,\ne}^\forall=\nu$. Lastly, it is not to see that $\Por$ forces $\mu\leq\cov(\Nwf)$.
\end{proof}

\section{Problems}\label{sec:openQ}

While we have shown several results of our newly defined cardinal invariants, see~\autoref{cichonext}, numerous questions remain unresolved:

\begin{question}
Are each one of the following statements consistent with $\thzfc$?  
\begin{enumerate}[label=\rm(\arabic*)]

\item\label{b5:b}
$\efrak^\const_{\hyp,\neq}<\non(\Ewf)$. Dually, $\cov(\Ewf)<\vfrak^\const_{\hyp,\neq}$.

\item $\efrak^\const_\ubd<\efrak^\const_2$. Dually, $\vfrak^\const_2<\vfrak^\const_\ubd$.

\item $\efrak^\const_{\hyp,\neq}<\bfrak$. Dually, $\vfrak^\const_{\hyp,\neq}>\dfrak$. 

\item $\efrak_{b}^\const>\cov(\Nwf)$. Dually, $\vfrak_{b}^\const<\non(\Nwf)$.
\end{enumerate}
\end{question}


{\small
\bibliography{refe}
\bibliographystyle{alpha}
}

\end{document}